\tikzset{snake it/.style={decorate, decoration=snake}}
\theoremstyle{plain}
\newtheorem{thm}{Theorem}[section]
\newtheorem{cor}[thm]{Corollary}
\newtheorem{lem}[thm]{Lemma}
\newtheorem{prop}[thm]{Proposition}
\newtheorem{conj}[thm]{Conjecture}
\newtheorem{question}[thm]{Question}
\newtheorem{speculation}[thm]{Speculation}
\theoremstyle{definition}
\newtheorem{example}[thm]{Example}
\theoremstyle{remark}
\newtheorem{rmk}[thm]{Remark}
\newcommand{\BC}{{\mathbb{C}}}
\newcommand{\BP}{{\mathbb{P}}}
\newcommand{\BQ}{{\mathbb{Q}}}
\newcommand{\BZ}{{\mathbb{Z}}}
\newcommand{\CA}{{\mathcal A}}
\newcommand{\CD}{{\mathcal D}}
\newcommand{\CE}{{\mathcal E}}
\newcommand{\CF}{{\mathcal F}}
\newcommand{\CG}{{\mathcal G}}
\newcommand{\CI}{{\mathcal I}}
\newcommand{\CM}{{\mathcal M}}
\newcommand{\CO}{{\mathcal O}}
\newcommand{\CY}{{\mathcal Y}}
\DeclareFontFamily{OT1}{rsfs}{}
\DeclareFontShape{OT1}{rsfs}{n}{it}{<-> rsfs10}{}
\DeclareMathAlphabet{\curly}{OT1}{rsfs}{n}{it}
\begin{document}
\title[$K3$ categories, cubic fourfolds, and BV filtration]{$K3$ categories, one-cycles on cubic fourfolds, and the Beauville--Voisin filtration}
\date{\today}

\author{Junliang Shen}
\address{ETH Z\"urich, Department of Mathematics}
\email{junliang.shen@math.ethz.ch}

\author{Qizheng Yin}
\address{Peking University, Beijing International Center for Mathematical Research}
\email{qizheng@math.pku.edu.cn}

\begin{abstract}
We explore the connection between $K3$ categories and $0$-cycles on holomorphic symplectic varieties. In this paper, we focus on Kuznetsov's noncommutative $K3$ category associated to a nonsingular cubic $4$-fold.

By introducing a filtration on the $\mathrm{CH}_1$-group of a cubic $4$-fold $Y$, we conjecture a sheaf/cycle correspondence for the associated $K3$ category~$\CA_Y$. This is a noncommutative analog of O'Grady's conjecture concerning derived categories of $K3$ surfaces. We study instances of our conjecture involving rational curves in cubic $4$-folds, and verify the conjecture for sheaves supported on low degree rational curves.

Our method provides systematic constructions of (a) the Beauville--Voisin filtration on the $\mathrm{CH}_0$-group and (b) algebraically coisotropic subvarieties of a holomorphic symplectic variety which is a moduli space of stable objects in $\CA_Y$.
\end{abstract}

\baselineskip=14.5pt
\maketitle

\setcounter{tocdepth}{1} 

\tableofcontents
\setcounter{section}{-1}

\section{Introduction}
\subsection{$K3$ categories and the Beauville--Voisin filtration} The purpose of this paper is to study the interactions between $K3$ categories and the Beauville--Voisin conjecture for holomorphic symplectic varieties. 

A triangulated category is called a $K3$ category if it has the same Serre functor and Hochschild homology as the derived category of coherent sheaves on a $K3$ surface. New examples of $K3$ categories are constructed using the derived categories of certain Fano varieties and semiorthogonal decompositions; see \cite{Kuz, Kuz2, KP}. 

Let $\CA$ be a $K3$ category. If $M$ is a nonsingular projective moduli space of stable objects with respect to a stability condition \cite{Br} on $\CA$, then it is a holomorphic symplectic variety. The nondegenerate holomorphic $2$-form is given by the Serre functor and the Mukai pairing, 
\[
\mathrm{Ext}_\CA^1(\CE, \CE) \times \mathrm{Ext}_\CA^1(\CE, \CE) \rightarrow \mathrm{Ext}_\CA^2(\CE, \CE)\xrightarrow{\mathrm{tr}} \BC.
\]
The Beauville--Voisin conjecture \cite{B2, V, V2} predicts that the Chow group $\mathrm{CH}_0(M)$ admits an increasing filtration 
\begin{equation}\label{BVfil}
S_0\mathrm{CH}_0(M) \subset S_1\mathrm{CH}_0(M) \subset \cdots \subset S_{\frac{1}{2}\dim M}\mathrm{CH}_0(M) = \mathrm{CH}_0(M)
\end{equation}
which is opposite to the conjectural Bloch--Beilinson filtration. Let $K_0(\CA)$ be the Grothendieck group of the triangulated category $\CA$ and let
\begin{equation*}
p_\CA: \CA \rightarrow K_0(\CA)
\end{equation*}
be the natural map. We have the following speculation on the structure~of~$\CA$.

\begin{speculation} \label{spec}
Let $\CA$ be a $K3$ category.
\begin{enumerate}
\item [(a)] There exists an increasing filtration $S_\bullet (\CA)$ on $K_0(\CA)$ which governs the Beauville--Vosin filtration \eqref{BVfil} for any moduli space $M$ as above. More precisely, the $i$-th piece $S_i\mathrm{CH}_0(M)$ is spanned by the classes of $\CE \in M$ with $p_\CA(\CE) \in S_i(\CA)$.

\item[(b)] For every object $\CE \in \CA$, we have
\begin{equation*} \label{generalizedOGconj}
p_\CA(\CE) \in S_{d(\CE)} (\CA)
\end{equation*}
with $d(\CE) = \frac{1}{2}\dim \mathrm{Ext}_\CA^1(\CE, \CE)$.
\end{enumerate}
\end{speculation}

Speculation \ref{spec} (b) can be viewed as a sheaf/cycle correspondence for the $K3$ category $\CA$. For a nonsingular projective moduli space $M$, Speculation~\ref{spec}~(b) implies exactly that
\[S_{\frac{1}{2}\dim M}\mathrm{CH}_0(M) = \mathrm{CH}_0(M).\]

\subsection{O'Grady's conjecture}
The first evidence of Speculation \ref{spec} is the case $\CA = D^b(X)$ where $X$ is a $K3$ surface. In \cite{OG2}, O'Grady introduced a filtration $S_\bullet(X)$ on the Chow group $\mathrm{CH}_0(X)$,
\[
S_0(X) \subset S_1(X) \subset \cdots \subset {S}_i(X) \subset \cdots \subset \mathrm{CH}_0(X).\footnote{
The pull-back via the map $K_0(X) \xrightarrow{c_2} \mathrm{CH}_0(X)$ induces a filtration on $K_0(X)$ as in Speculation \ref{spec} (a).}
\]
Here $S_i(X)$ is the union of $[z]+ \mathbb{Z} \cdot [o_X]$ with $z$ an effective $0$-cycle of length~$i$ and $[o_X] \in \mathrm{CH}_0(X)$ the Beauville--Voisin canonical class \cite{BV}.

The following generalized version of O'Grady's conjecture \cite{OG2} is proven in \cite{SYZ}, based on earlier results of Huybrechts, O'Grady, and Voisin in \cite{Huy, OG2, V1}.

\begin{thm}\label{OGconj}
For any object $\CE \in D^b(X)$, we have
\[
c_2(\CE) \in S_{d(\CE)}(X).
\]
\end{thm}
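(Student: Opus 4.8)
The plan is to use the structure of $D^b(X)$ to reduce to the case of a single stable object in a moduli space, and there to argue by induction on the Mukai square; I would prepare two kinds of input --- formal properties of the filtration $S_\bullet(X)$, and the geometry of moduli of stable objects. Since $c_2$ factors through $K_0(X)$, it suffices to prove $c_2([\CE])\in S_{d(\CE)}(X)$, so $\CE$ may be replaced by any object with the same class and no larger $d$. The formal facts I would record are: (i) $S_a(X)+S_b(X)\subseteq S_{a+b}(X)$, by concatenating effective cycles and absorbing $\BZ[o_X]$ into one summand; (ii) $D\cdot D'\in\BZ[o_X]=S_0(X)$ for divisor classes $D,D'$, by Beauville--Voisin; and (iii) the reflection property, that $S_i(X)$ is stable under $z\mapsto 2(\deg z)[o_X]-z$, equivalently that $2i[o_X]-z$ is effective whenever $z$ is effective of length $i$ --- a structural statement about $0$-cycles on K3 surfaces that must be secured separately (it is also formally forced by applying the theorem to $I_Z[1]$). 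With these, for any triangle $\CA\to\CE\to\CB$ one has $c_2(\CE)\equiv c_2(\CA)+c_2(\CB)\pmod{\BZ[o_X]}$, hence $c_2(\CE)\in S_{a+b}(X)$ when $c_2(\CA)\in S_a(X)$ and $c_2(\CB)\in S_b(X)$; and $c_2(\CE[1])\equiv -c_2(\CE)\pmod{\BZ[o_X]}$, which (iii) returns to the same $S_i$.

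Next I would carry out the reductions. From the spectral sequence $\bigoplus_j\Ext^p(H^j(\CE),H^{j+q}(\CE))\Rightarrow\Ext^{p+q}(\CE,\CE)$: since $\Ext^{<0}$ and $\Ext^{>2}$ vanish for sheaves on a surface, the diagonal piece $\bigoplus_j\Ext^1(H^j(\CE),H^j(\CE))$ carries no incoming or outgoing differential and survives to $E_\infty$, giving $d(\CE)\ge\sum_j d(H^j(\CE))$; as $[\CE]=\sum_j(-1)^j[H^j(\CE)]$, combining with (i)--(iii) reduces the statement to $\CE=F$ a coherent sheaf. Running the analogous filtration spectral sequence along the Harder--Narasimhan filtration of $F$ --- where $\Hom$, hence $\Ext^2$, between factors in the slope-increasing direction vanishes --- reduces to $F$ semistable, and then along a Jordan--H\"older filtration --- where for distinct stable factors $\CG_i$ of the same phase one has $\Hom(\CG_i,\CG_j)=\Ext^2(\CG_i,\CG_j)=0$, so $\langle v_i,v_j\rangle=\dim\Ext^1(\CG_i,\CG_j)\ge 0$, and combining $d(F)=\dim\Hom(F,F)+n(F)-1$ with $\dim\Hom(F,F)\ge\sum_i m_i$ yields $d(F)\ge\sum_i m_i\,d(\CG_i)$ --- reduces to $F$ a stable sheaf. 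A Fourier--Mukai transform together with the birational geometry of moduli spaces of Bridgeland-stable objects then lets one move between Gieseker- and Bridgeland-stability and normalize the Mukai vector; so it remains to prove, for $\CE$ stable with $v(\CE)^2=2n-2$ (hence $d(\CE)=n$), that $c_2(\CE)\in S_n(X)$.

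This last statement is O'Grady's conjecture for moduli of stable objects, which I would prove by induction on $n$, invoking the results of Huybrechts, O'Grady and Voisin. For $n=0$ the object is spherical and Huybrechts' theorem gives $c_2(\CE)\in\BZ[o_X]=S_0(X)$. For $n\ge 1$ and general $\CE\in M_v$, I would pick a stable elementary modification $0\to\CE\to\CE'\to\CO_p\to 0$, so that $v(\CE')=v(\CE)+(0,0,1)$ has $v(\CE')^2=v(\CE)^2-2r$ with $r=\operatorname{rk}\CE\ge 1$, and $c_2(\CE)=c_2(\CE')+[p]$; the inductive hypothesis gives $c_2(\CE')\in S_{n-r}(X)$, whence $c_2(\CE)\in S_{n-r}(X)+S_1(X)\subseteq S_n(X)$ --- the rank-$0$ case being run with modifications along the support curve (base case: a line bundle on a rational curve), and the high-rank cases being reached after normalizing $v$. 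To pass from a general $\CE\in M_v$ to every $\CE_0\in M_v$, I would spread $\CE_0$ out in a family over a curve $T\subseteq M_v$ whose general member satisfies the bound and use properness of $X^{[n]}$: the incidence locus $\{(t,Z)\in T\times X^{[n]}: c_2(\CE_t)-[Z]\in\BZ[o_X]\ \text{in}\ \mathrm{CH}_0(X)\}$ is a countable union of closed subschemes, one of which dominates $T$ and, being proper over $T$, surjects onto it.

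The hard part, I expect, is twofold. First, the super-additivity of $d$ along Harder--Narasimhan and especially Jordan--H\"older filtrations: the off-diagonal $\Ext$-terms and the spectral-sequence differentials have to be pinned down, with Serre duality pairing the ``wrong-degree'' contributions and the endomorphism dimension $\dim\Hom(F,F)$ bounded below by the number of Jordan--H\"older factors --- this is where the derived-categorical content of the theorem sits. Second, the uniformity of the elementary-modification induction over all of $M_v$, which is exactly the content of O'Grady's and Voisin's work: that the fibres of $c_2\colon M_v\to\mathrm{CH}_0(X)$ and the auxiliary Hecke correspondences are large and rationally connected enough for the ``general implies all'' step and for the high-rank cases. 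The reflection property of $S_\bullet(X)$ is a further, shallower input that nonetheless must be in hand before any of the reductions can run.
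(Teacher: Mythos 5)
First, note that this paper does not actually prove Theorem \ref{OGconj}: it is imported wholesale from [SYZ], so the only comparison available is with that cited proof. Your architecture tracks it closely at the level of skeleton: the formal properties of $S_\bullet(X)$ (superadditivity, $\BZ$-stability/negation via O'Grady's ``cone'' result, absorption of divisor intersections into $\BZ[o_X]$ by Beauville--Voisin), the reduction from complexes to sheaves by the cohomology spectral sequence, the reduction to stable sheaves along Harder--Narasimhan and Jordan--H\"older filtrations, Huybrechts' theorem for the spherical base case, and the ``general implies all'' step via a proper incidence correspondence with $X^{[d]}$ are all genuine ingredients of the cited argument. One local flag: the inequality $\dim\Hom(F,F)\ge\sum_i m_i$ is false for iterated nonsplit extensions (already for three Jordan--H\"older factors $A,B,A$ one can have $\dim\Hom(F,F)=2$); the correct bookkeeping is forced instead by the Serre-duality pairing $\Ext^1(\CG_i,\CG_j)\times\Ext^1(\CG_j,\CG_i)\to\Ext^2(\CG_i,\CG_i)$, which you do name as the relevant tool, so this is repairable.

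The genuine gap is the induction for stable sheaves. The extension $0\to\CE\to\CE'\to\CO_p\to 0$ is classified by $\Ext^1(\CO_p,\CE)$, and on a surface the Koszul resolution gives $\Ext^i(\CO_p,\CE)=0$ for $i<2$ whenever $\CE$ is locally free at $p$. For a primitive Mukai vector of rank $\ge 2$ the general point of $M_v$ is a locally free sheaf, so your ``upper'' elementary modification simply does not exist in the main case; the modification that always exists, $0\to\CE''\to\CE\to\CO_p\to 0$, has $v(\CE'')^2=v(\CE)^2+2r$ and runs the induction in the wrong direction. (In rank $0$ the point modification does not change $v^2$ at all, so the induction does not terminate there either without a separate argument on the support curve.) This is not a removable technicality: it is exactly the obstruction that confined the earlier results of O'Grady and Voisin to special Mukai vectors and bounded $d$, and it is the point where [SYZ] does something different --- transporting the statement along autoequivalences (spherical twists, tensoring by line bundles, shifts and duals), under which $d(\CE)$ is preserved and $c_2(\CE)$ changes only by an element of $\BZ[o_X]$ (by Huybrechts' theorem and Beauville--Voisin), so as to move an arbitrary primitive Mukai vector of square $2d-2$ into the range where the rank-one (Hilbert scheme) tautology and the O'Grady--Voisin cases apply. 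Your sketch mentions Fourier--Mukai normalization but then leans on the modification induction rather than on this transport, so as written the stable case --- which is the heart of the theorem --- is not established.
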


Theorem \ref{OGconj} established a sheaf/cycle correspondence for $D^b(X)$. Moreover, O'Grady's filtration is indeed expected to govern the Beauville--Voisin filtration for any nonsingular moduli space $M$ of stable objects in $D^b(X)$; see \cite{SYZ} for further details.

\subsection{Cubic fourfolds and one-cycles} \label{onecycle}
In this paper, we discuss Speculation \ref{spec} for $K3$ categories other than the derived categories of $K3$ surfaces. 

Let $Y \subset \mathbb{P}^5$ be a nonsingular cubic hypersurface. Kuznetsov constructed in \cite{Kuz} a $K3$ category $\CA_Y$ as a full subcategory of $D^b(Y)$,
\begin{equation} \label{Kuzcategory}
\CA_Y = \{ \CE\in D^b(Y): \mathrm{Ext}^*_{D^b(Y)}(\CO_Y(i) , \CE) =0 \ \textrm{ for }\ i=0,1,2 \}.
\end{equation}
If $Y$ is very general, then $\CA_Y$ is not equivalent to $D^b(X)$ of a $K3$ surface~$X$.\footnote{In fact, the category $\CA_Y$ for a very general cubic $4$-fold $Y$ is not equivalent to the derived category of twisted sheaves on a $K3$ surface.} Hence $\CA_Y$ is viewed as a noncommutative $K3$ surface. 

Our first result introduces a filtration on the Chow group $\mathrm{CH}_1(Y)$, which serves as a candidate of the filtration in Speculation \ref{spec}.\footnote{Again, the filtration on $K_0(\CA_Y)$ is obtained by pulling back via the natural maps $K_0(\CA_Y) \rightarrow K_0(Y) \xrightarrow{c_3} \mathrm{CH}_1(Y)$.} We briefly describe the construction below; see Section \ref{secfilt} for more details.

Let $F$ denote the Fano variety of lines in $Y$. We fix a uniruled divisor $D$ on~$F$,
\[\begin{tikzcd}
D \arrow[hook]{r}{} \arrow[dashed]{d}{q} & F, \\
B
\end{tikzcd}\]
where $q$ is a rational map whose general fibers are rational curves.

We call a line $l \subset Y$ \emph{special} (with respect to the uniruled divisor $D$) if the~$0$-cycle class $[l] \in \mathrm{CH}_0(F)$ is represented by a point on $D$. A line $l$ is called \emph{canonical} if it satisfies
\[
3[l] = [H]^3 \in \mathrm{CH}_1(Y), 
\]
where $[H] \in \mathrm{CH}^1(Y)$ is the hyperplane class.

We define a filtration $S_\bullet(Y)$ on $\mathrm{CH}_1(Y)$,
\[
S_0(Y) \subset S_1(Y) \subset \cdots \subset {S}_i(Y) \subset \cdots \subset \mathrm{CH}_1(Y),
\]
where $S_i(Y)$ is the union of $[l_1 + l_2 + \dots + l_i] + \mathbb{Z} \cdot [l_0]$ with $l_k$ $(k > 0)$ special lines and $l_0$ a canonical line. It is shown in Lemma \ref{lem1} that the filtration~$S_\bullet(Y)$ does not depend on the choice of $D$ and is ``intrinsic" to $Y$.

We propose the following conjecture relating the $K3$ category $\CA_Y$ to the filtration $S_\bullet(Y)$.

\begin{conj} \label{mainconj}
For any object $\CE \in \CA_Y$, we have 
\[
\mathrm{c}_3(\CE) \in S_{d(\CE)}(Y).
\]
\end{conj}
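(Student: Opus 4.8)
The plan is to mirror, inside the noncommutative $K3$ surface $\CA_Y$, the proof of Theorem \ref{OGconj} for $D^b(X)$ in \cite{SYZ}, with the Fano variety of lines $F=F(Y)$ — a holomorphic symplectic fourfold of $K3^{[2]}$-type by Beauville--Donagi — in the role played there by the $K3$ surface $X$. Let $\mathbb{P}\subset F\times Y$ be the universal line, with projections $p\colon\mathbb{P}\to F$ and $q\colon\mathbb{P}\to Y$; the incidence correspondence $\Psi=q_*\,p^*\colon\mathrm{CH}_0(F)\to\mathrm{CH}_1(Y)$ is surjective and identifies the homologically trivial parts $\mathrm{CH}_0(F)_{\mathrm{hom}}\cong\mathrm{CH}_1(Y)_{\mathrm{hom}}$. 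One expects $\Psi$ to identify $S_\bullet(Y)$ with an O'Grady-type length filtration on $\mathrm{CH}_0(F)$: special lines should correspond to points of the uniruled divisor $D\subset F$, and the class $[l_0]$ of a canonical line to the Beauville--Voisin canonical point of $F$. Since $F$ sits inside a moduli space of stable objects of $\CA_Y$ (and, via twisted cubics, so does the Lehn--Lehn--Sorger--van Straten eightfold), Conjecture \ref{mainconj} should, through $\Psi$ and the natural maps $K_0(\CA_Y)\to K_0(Y)\xrightarrow{c_3}\mathrm{CH}_1(Y)$, be equivalent to Speculation \ref{spec}~(b) for $\CA_Y$ — a sheaf/cycle correspondence to be proved intrinsically in $\CA_Y$.

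First I would reduce the statement to stable objects. Using a Bridgeland stability condition on $\CA_Y$ (now available) and Harder--Narasimhan and Jordan--H\"older filtrations, one writes a given $\CE$ in terms of stable pieces. Because $c_3=2\ch_3+\tfrac{1}{6}\ch_1^3-\ch_1\cdot\ch_2$ with the $\ch_i$ additive on exact triangles, the failure of additivity of $c_3$ along a triangle $\CE'\to\CE\to\CE''$ equals $\ch_1(\CE')\,\ch_2(\CE'')+\ch_1(\CE'')\,\ch_2(\CE')$ up to a multiple of $[H]^3$; since $\ch_1$ of any object of $D^b(Y)$ is a multiple of $[H]$, this error lies in the finitely generated group $[H]\cdot\mathrm{CH}^2(Y)$, which one must show is absorbed by a bounded step of $S_\bullet(Y)$ (via hyperplane sections of surfaces degenerating to special lines), while $S_a(Y)+S_b(Y)\subset S_{a+b}(Y)$ keeps the accumulation of errors finite. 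For stable $\CE$, the Mukai pairing on $K_0(\CA_Y)$ together with Serre duality gives $\dim\Ext^1_{\CA_Y}(\CE,\CE)=2-\chi_{\CA_Y}(\CE,\CE)$, so $d(\CE)$ becomes an explicit quadratic function of $\ch(\CE)$ to be matched against $S_\bullet(Y)$. Finally, the projection functor $D^b(Y)\to\CA_Y$ — mutations past $\CO_Y,\CO_Y(1),\CO_Y(2)$ — changes $c_3$ only by integer multiples of $[H]^3=3[l_0]$, so the summand $\BZ\cdot[l_0]$ in $S_i(Y)$ is precisely what permits free passage between a sheaf on $Y$ and its image in $\CA_Y$.

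Next I would treat sheaves supported on rational curves, which is where $c_3$ actually lives. For a rational curve $C\subset Y$ of low degree $e$, one takes a line bundle on $C$, pushes it forward, and projects into $\CA_Y$ to obtain an explicit $\CE_C$ with $c_3(\CE_C)=\pm[C]+n[l_0]$ for some $n\in\BZ$, while $d(\CE_C)$ is computed from $H^0(C,N_{C/Y})$ together with small corrections from the extension and projection. To place $[C]$ in $S_{d(\CE_C)}(Y)$ one degenerates $C$, within its family of rational curves in $Y$, to a chain or comb of at most $e$ lines, so $[C]=[l_1+\dots+l_e]$ in $\mathrm{CH}_1(Y)$; using that this family is large for small $e$ — and the intrinsic-ness of $S_\bullet(Y)$ from Lemma \ref{lem1} to choose a convenient uniruled divisor $D$ — one arranges each $l_i$ to be special and checks $e\le d(\CE_C)$. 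Carrying this out for lines, conics, and twisted cubics (the last dovetailing with the Lehn--Lehn--Sorger--van Straten construction) yields the cases in the abstract, and in passing produces the Beauville--Voisin filtration and algebraically coisotropic subvarieties on the associated symplectic moduli spaces.

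The step I expect to be the main obstacle is the passage from these explicit objects to arbitrary $\CE\in\CA_Y$. Since $\CA_Y$ carries no ambient abelian category with a support filtration to induct along, one must work entirely through stability conditions, and controlling how $d(\CE)$ and $c_3(\CE)$ interact with Harder--Narasimhan and Jordan--H\"older filtrations — the accumulation of the $[H]\cdot\mathrm{CH}^2(Y)$-errors and of the cross-terms in the Mukai pairing — is already delicate. More seriously, the proof of Theorem \ref{OGconj} for $D^b(X)$ uses rational equivalences produced by rational curves on moduli spaces, uniruledness of those moduli, spherical objects, and explicit Fourier--Mukai transforms between them; for a very general $Y$ the category $\CA_Y$ is genuinely noncommutative — not equivalent to $D^b(X)$, even after twisting — so these structures must be rebuilt intrinsically and, in particular, one cannot import Theorem \ref{OGconj} along an equivalence. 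This is why, for the time being, the conjecture is only within reach for sheaves supported on low-degree rational curves, where every ingredient can be made completely explicit.
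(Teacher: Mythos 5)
First, a point of logic: the statement you were given is Conjecture \ref{mainconj}. The paper does not prove it, and neither do you; what the paper actually establishes is Theorem \ref{rationalcurve}, the case of $1$-dimensional sheaves supported on rational curves of degree $e\le 4$, and you correctly identify that this is where the substance lies. Several of your preliminary reductions do match the paper: the projection $\iota^\ast$ changes $c_3$ only by multiples of $[H]^3=3[l_0]$ (Lemma \ref{Lem2.2} and Corollary \ref{cor2.3}); the lower bound $d(\iota^\ast\CE)\ge b(e)$ comes from $\dim\mathrm{Ext}^1_{\CA_Y}(\CE,\CE)=[\CE]^2+2\dim\mathrm{Hom}_{\CA_Y}(\CE,\CE)$ evaluated on the $A_2$-lattice spanned by $\lambda_1,\lambda_2$ (Proposition \ref{prop2.5}); and the cone properties of Proposition \ref{Prop1.6} absorb the factor in $c_3(\CE)=2r[C]$. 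Your proposed reduction of a general object to stable pieces via Harder--Narasimhan filtrations is not something the paper attempts, and you rightly flag it as the unresolved obstacle.

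The genuine gap is in your treatment of $(\dag)$, i.e., of placing $[C]$ itself in $S_{b(e)}(Y)$. You propose to degenerate $C$ inside the irreducible family $\CM_e$ to a chain of $e$ lines and conclude $[C]=[l_1+\dots+l_e]\in\mathrm{CH}_1(Y)$. This fails for two reasons. First, a degeneration over an irreducible but not rationally connected base yields algebraic equivalence, not rational equivalence; the MRC quotient $\CM_e\dashrightarrow\CM_e'$ has a positive-dimensional holomorphic symplectic base precisely because general members of $\CM_e$ are \emph{not} all rationally equivalent to one another, so $[C]$ cannot be moved to a boundary chain of lines within $\mathrm{CH}_1(Y)$. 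Second, the specialization principle behind Lemma \ref{lem1.7} goes the wrong way for you: the condition ``$[C]\in S_i(Y)$'' is a countable union of closed conditions and therefore propagates from the very general member to all members, not from a special boundary member back to the general one. The paper instead proves $(\dag)$ for the \emph{general} curve by exhibiting explicit rational equivalences: residual lines and intersection numbers on the surface $S_l$ for lines (Proposition \ref{prop2.7}); the residual line in the plane $\BP^2_C$ for conics; the linear system $|\CO_{Y_C}(l_1-l_2)|$ on the cubic surface $Y_C$ for twisted cubics; and, for quartics, the Abel--Jacobi isomorphism $\mathrm{CH}_1(V)_{\mathrm{hom}}\simeq J_V$ of the cubic threefold $V=H\cap Y$ combined with the surjectivity of $h_5:R'^{(5)}\to J_V$ for the curve $R'$ of special lines in $V$ (Corollary \ref{cor2.9}). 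None of these is a degeneration argument, and the quartic case in particular has no counterpart in your sketch.
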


Here $\mathrm{c}_3$ is the composition of the inclusion $\CA_Y \subset D^b(Y)$ and \[\mathrm{c}_3: D^b(Y) \to \mathrm{CH}_1(Y).\]
See also Remark \ref{rmk2.4} for an equivalent formulation of Conjecture \ref{mainconj}.

Comparing to the derived category of a $K3$ surface, one advantage of studying the $K3$ category $\CA_Y$ is that cubic $4$-folds have a $20$-dimensional moduli space. Hence our filtration provides a candidate of the Beauville--Voisin filtration of certain holomorphic symplectic varieties of $K3^{[n]}$ type in $20$-dimensional families.\footnote{Stability conditions and moduli spaces of stable objects related to $\CA_Y$ are explored in~\cite{BLMS, LLMS}.}

\subsection{Rational curves} \label{secintrorat}
We study the interplay between Conjecture \ref{mainconj} and the geometry of rational curves in nonsingular cubic $4$-folds \cite{BD, dJS, LLMS, LLSS}.

Let 
\[
\iota^\ast: D^b(Y) \rightarrow \CA_Y
\]
be the left adjoint functor of the natural inclusion $\iota_\ast: \CA_Y \hookrightarrow D^b(Y)$. The following theorem concerns low degree rational curves in $Y$.

\begin{thm} \label{rationalcurve}
Let $C \subset Y$ be a nonsingular connected rational curve of degree~\mbox{$\leq 4$}. If $\CE$ is a $1$-dimensional sheaf supported\,\footnote{Here we mean the reduced support of the sheaf $\CE$ is $C$.} on $C$, then Conjecture~\ref{mainconj} holds for~$\iota^\ast \CE$.
\end{thm}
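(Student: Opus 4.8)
\emph{Overview.} The plan is to compute the two quantities in the assertion $c_3(\iota^\ast\CE)\in S_{d(\iota^\ast\CE)}(Y)$ separately and then compare them. Homologically, I would use the semiorthogonal decomposition $D^b(Y)=\langle\CA_Y,\CO_Y,\CO_Y(1),\CO_Y(2)\rangle$: the functor $\iota^\ast$ is realised by iterated left mutation through $\langle\CO_Y,\CO_Y(1),\CO_Y(2)\rangle$, so $\iota^\ast\CE$ is obtained from $\CE$ by three explicit cone constructions against the $\CO_Y(i)$. This makes both the cycle class $c_3(\iota^\ast\CE)\in\mathrm{CH}_1(Y)$ and the integer $d(\iota^\ast\CE)=\tfrac12\dim\mathrm{Ext}^1_{\CA_Y}(\iota^\ast\CE,\iota^\ast\CE)$ explicitly accessible in terms of $\CE$ and $C$. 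Geometrically, I would then invoke the classical projective geometry of small-degree rational curves in a cubic fourfold to rewrite $c_3(\iota^\ast\CE)$ as a bounded sum of line classes.

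\emph{Step 1: the cycle class.} In $K_0(Y)$ the mutation description gives $[\iota^\ast\CE]=[\CE]+\sum_{i=0}^2 n_i[\CO_Y(i)]$ with $n_i\in\BZ$ built from the Euler characteristics $\chi(Y,\CE(-j))$, $j=0,1,2$. Since $\CE$ is one-dimensional we have $c_1(\CE)=c_2(\CE)=0$, and the $\CO_Y(i)$ have Chern classes in $\BZ[H]$, so one computes in $\mathrm{CH}_1(Y)$
\[
c_3(\iota^\ast\CE)=2r\,[C]+m\,[H]^3=2r\,[C]+3m\,[l_0],
\]
where $r\ge1$ is the generic rank of $\CE$ along $C$, $m\in\BZ$, and $l_0$ is a canonical line; thus only $2r[C]$ matters modulo $\BZ[l_0]$. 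To decompose $[C]$ I would use the linear span $\langle C\rangle$: for $\deg C=1$ there is nothing to do; for $\deg C=2$ the plane $\langle C\rangle$ meets $Y$ in $C$ plus a residual line, so $[C]=[H]^3-[l]$; for $\deg C=3$ the space $\langle C\rangle\cong\BP^3$ cuts out a cubic surface $S\supset C$ on which $[C]$ is a sum of three of the twenty-seven lines; for $\deg C=4$ one treats separately $\langle C\rangle\cong\BP^3$ (residual cubic surface) and $\langle C\rangle\cong\BP^4$ (residual cubic threefold), reducing $[C]$ to a bounded combination of lines by residuation against hyperplane sections and, where needed, by passing to a curve of lower degree. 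Together with Lemma \ref{lem1}, which frees us to put the resulting lines on a uniruled divisor, this gives $2r[C]\in S_k(Y)$ with $k$ bounded by an explicit function of $r$ and $\deg C$.

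\emph{Step 2: the number $d$.} By adjunction $\mathrm{Ext}^\bullet_{\CA_Y}(\iota^\ast\CE,\iota^\ast\CE)\cong\mathrm{Ext}^\bullet_{D^b(Y)}(\CE,\iota_\ast\iota^\ast\CE)$, and the canonical triangle $\iota_\ast\iota^\ast\CE\to\CE\to\CN$ with $\CN\in\langle\CO_Y,\CO_Y(1),\CO_Y(2)\rangle$ reduces matters to the pairings of $\CE$ with the twists $\CO_Y(i)$; Serre duality on $Y$ ($\omega_Y\cong\CO_Y(-3)$) computes these through the cohomology of $\CE|_C$. As $\chi_{D^b(Y)}(\CE,\CE)=0$ for a one-dimensional sheaf on a fourfold, the Mukai pairing on the $K3$ category $\CA_Y$ gives $\chi_{\CA_Y}(\iota^\ast\CE,\iota^\ast\CE)=-\chi_{D^b(Y)}(\CE,\CN)$, an explicit and suitably negative integer, so that $d(\iota^\ast\CE)=\dim\mathrm{Hom}_{\CA_Y}(\iota^\ast\CE,\iota^\ast\CE)-\tfrac12\chi_{\CA_Y}(\iota^\ast\CE,\iota^\ast\CE)$ is an explicit non-negative integer, which one checks exceeds the bound $k$ of Step 1 in all cases at hand. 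A filtration argument --- writing a general one-dimensional $\CE$ with reduced support $C$ as an iterated extension of line bundles on $C\cong\BP^1$ and absorbing the cross-terms --- reduces the whole statement, if one prefers, to the case of a line bundle on $C$.

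\emph{Main obstacle.} The crux is Step 1 in degree four. When $\langle C\rangle\cong\BP^4$ one is forced onto a residual cubic threefold $Z=Y\cap\BP^4$, and since $C$ lies on no hyperplane section of $Z$, expressing $[C]\in\mathrm{CH}_1(Z)$ by a controlled number of lines requires a second layer of residuation --- for instance inside a quadric section $T=Q\cap Z$ containing $C$, which is a $K3$ surface on which $C$ is a $(-2)$-curve --- and this is considerably more delicate than the planar and cubic-surface arguments in lower degree. Carrying along simultaneously which lines appear, that they can be made special, and that their number stays below $d(\iota^\ast\CE)$ is where the real work lies; this also accounts for the degree bound, since a rational curve of degree $\ge5$ may span all of $\BP^5$, leaving no residual hypersurface on which to run the argument.
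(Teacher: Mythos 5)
Your skeleton (compute $c_3$, bound $d(\iota^\ast\CE)$ from below via the Mukai pairing on $K_{\mathrm{top}}(\CA_Y)$, then decompose $[C]$ into lines by residuation in its linear span) matches the paper's, and Step 2 is essentially Proposition \ref{prop2.5}: there $[\iota^\ast\CE]=(re-m)\lambda_1+m\lambda_2$ in the $A_2$-lattice and one minimizes $3m^2-3mre+r^2e^2$ over $m\in\BZ$ to get the bound $b(e)$. But Step 1 has genuine gaps exactly where the work is. First, degree $1$ is \emph{not} ``nothing to do'': $S_1(Y)$ is built from \emph{special} lines, i.e.\ lines whose class in $\mathrm{CH}_0(F)$ is represented by a point of a uniruled divisor $D$, and a general line is not special. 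Lemma \ref{lem1} only says that specialness is independent of the choice of $D$; it does not let you ``put the resulting lines on a uniruled divisor.'' The paper needs Proposition \ref{prop2.7}: for a general line $l$ the tangent-plane residuation $\BP^2_l\cdot Y=l+l_1+l_2$ can be arranged with $l_1,l_2\in D$, proved by an intersection computation on the surface $S_l$ of lines meeting $l$ (namely $[D_l]\cdot[\tau_l(D_l)]=24a^2>0$). Second, in degree $3$ your ``sum of three of the twenty-seven lines'' only lands in $S_6(Y)$ (each line costs $2$ by the degree-$1$ case), whereas the required bound is $b(3)=4=\min d(\iota^\ast\CE)$; the paper instead uses that $C\in|\CO_{Y_C}(l_1-l_2)|$ on the cubic surface $Y_C=\BP^3_C\cap Y$, so $[C]=[l_1]-[l_2]+3[l_0]$ involves only \emph{two} lines, with $-[l_2]\in S_2(Y)$ by Proposition \ref{Prop1.6}(b).

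Third, degree $4$ --- which you correctly identify as the crux --- is left unproved, and the paper's route is different from your K3-residuation sketch: a general quartic rational curve spans a unique hyperplane, $V=H\cap Y$ is a cubic threefold, and one uses the Abel--Jacobi isomorphism $\mathrm{CH}_1(V)_{\mathrm{hom}}\simeq J_V$ together with the surjectivity of $R'^{(5)}\to J_V$, where $R=D\cap S$ is the trace of the uniruled divisor on the Fano surface $S$ of $V$ (Lemma \ref{Lem2.8}, Corollary \ref{cor2.9}); this produces exactly five \emph{special} lines with $[C]=\sum_{i=1}^5[l_i]-[l_0]\in S_5(Y)=S_{b(4)}(Y)$. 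Your residual-surface approach gives no control on either the number of lines or their specialness. Two further points you elide: the passage from $[C]\in S_{b(e)}(Y)$ to $2r[C]\in S_{b(e)}(Y)$ is not formal, since $S_i(Y)$ is a cone rather than a group --- it requires Proposition \ref{Prop1.6}(b), whose proof rests on the elliptic uniruled divisor of Lemma \ref{lem1.8}; and the reduction to \emph{general} curves of each degree (needed for the unique plane/$\BP^3$/hyperplane and for Proposition \ref{prop2.7}) uses the irreducibility of $\CM_e$ \cite{CS} together with the specialization Lemma \ref{lem1.7}.
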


\[
\begin{array}{ c | c | c | c | c }
\deg C &    1 & 2 & 3 & 4 \\ \hline
\min d(\iota^\ast\CE) & 2 & 2 & 4 & 5
\end{array}
\]

For a nonsingular connected rational curve $C$ of degree $\leq 4$, we list in the table above the minimal possible values of
\[
d(\iota^\ast\CE) = \frac{1}{2}\dim \mathrm{Ext}_{\CA_Y}^1{(}\iota^\ast\CE, \iota^\ast\CE{)}
\]
for all $\CE$. These numbers are related to the maximal rationally connected (MRC) fibration on the moduli space of rational curves in $Y$; see Section~\ref{secrat} for further discussions. 

\subsection{Algebraically coisotroptic subvarieties}
Let $M$ be a holomorphic symplectic variety of dimensional $2d$. Following \cite[Definition 0.6]{V2}, a closed subvariety $Z_i \subset M$ of codimension $i$ is called algebraically coisotropic if there exists a diagram 
\[\begin{tikzcd}
Z_i \arrow[hook]{r}{} \arrow[dashed]{d}{q} & M, \\
B_i
\end{tikzcd}\]
such that the general fibers of $q$ are $i$-dimensional, and the restriction of the holomorphic $2$-form on $M$ coincides with the pull-back of a holomorphic $2$-form on $B_i$.

Voisin \cite[Conjecture 0.4]{V2} conjectured that for every $i \leq d$, there exists an algebraically coisotropic subvariety $Z_i \dasharrow B_i$ of codimension $i$ whose general fibers are constant cycle subvarieties of $M$.\footnote{A constant cycle subvariety is a subvariety whose points all share the same class in the $\mathrm{CH}_0$-group of the ambient variety.}

This conjecture was addressed in \cite{SYZ} when $M$ is a moduli space of stable objects in the derived category of a $K3$ surface. We discuss in Section \ref{seccoiso} the connection between Conjecture \ref{mainconj} and Voisin's conjecture for the moduli spaces of stable objects in $\CA_Y$; see Theorem \ref{thm3.2}. The crucial geometric input is the construction in Lemma \ref{lem1.8} of a special uniruled divisor on the Fano variety $F$.

\subsection{Conventions}

Throughout, we work over the complex numbers $\mathbb{C}$. All varieties are assumed to be (quasi-)projective. Morphisms between triangulated categories are $\BC$-linear.

\subsection{Acknowledgements}

We thank Lie Fu, Daniel Huybrechts, Conan Leung, Zhiyuan Li, Davesh Maulik, Georg Oberdieck, Rahul Pandharipande, Jason Starr, Claire Voisin, Chenyang Xu, and Xiaolei Zhao for their interest and for useful discussions. Part of the work presented here was done while both authors were visiting Shanghai Center for Mathematical Sciences in July~2017. 

J.~ S. was supported by grant ERC-2012-AdG-320368-MCSK in the group of Rahul Pandharipande at ETH Z\"urich.

\section{A filtration on \texorpdfstring{$\text{CH}_1(Y)$}{CH\_1(Y)}} \label{secfilt}

Let $Y\subset \BP^5$ be a nonsingular cubic $4$-fold and let $F$ be the Fano variety of lines in $Y$. In this section, we present some basic properties of the filtration $S_\bullet(Y)$ introduced in Section \ref{onecycle}. Our filtration on $\mathrm{CH}_1(Y)$, which is analogous to O'Grady's filtration on the $\mathrm{CH}_0$-group of a $K3$ surface, relies heavily on the geometry of the Fano variety $F$. 

\subsection{Uniruled divisors} \label{secuni}
Uniruled divisors on $F$ play an important role in the definition of the filtration $S_\bullet(Y)$. We first note that there exist uniruled divisors on the Fano variety of lines in any nonsingular cubic $4$-fold. A geometric construction is the following.

In \cite{V0}, Voisin constructed a self-rational map 
\begin{equation}\label{selfmap}
\varphi: F \dasharrow F
\end{equation}
sending a general line $l \subset Y$ to its residual line with respect to the unique plane $\mathbb{P}^2\subset \BP^5$ tangent to $Y$ along $l$. Then the exceptional locus of $\varphi$ gives a uniruled divisor on $F$; see \cite[Proposition 4.4]{V2}. More generally, Charles and Pacienza showed in \cite[Theorem 1.1]{CP} the existence of uniruled divisors on any holomorphic symplectic variety of $K3^{[n]}$ type using moduli spaces of stable maps.

The following lemma asserts that the filtration $S_\bullet(Y)$ does not depend on the choice of the uniruled divisor.

\begin{lem} \label{lem1}
If a line $l \subset Y$ is special with respect to one uniruled divisor $D \subset F$, then it is special with respect to any uniruled divisor of $F$.
\end{lem}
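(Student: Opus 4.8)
The plan is to reduce the statement to a known structural fact about $0$-cycles on the Fano variety $F$: a uniruled divisor $D \subset F$, being swept out by rational curves, forces every one of its points to lie in a nontrivial algebraic family of rationally equivalent $0$-cycles, and conversely the points of $F$ whose class in $\mathrm{CH}_0(F)$ is non-generic (i.e.\ which move in a positive-dimensional rational equivalence class, or are ``constant cycle'' in the appropriate sense) are captured by \emph{any} uniruled divisor. More precisely, I would invoke the result — due to Voisin and developed in this context by Huybrechts and others — that for a holomorphic symplectic variety $M$ of $K3^{[n]}$ type there is a well-defined filtration on $\mathrm{CH}_0(M)$ whose deepest nontrivial piece $S_1$ consists exactly of those points lying on a (equivalently, any) uniruled divisor, all of which are rationally equivalent to a fixed canonical class $c_M \in \mathrm{CH}_0(M)$. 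For $M = F$ the Fano variety of lines, this says: if $D \subset F$ is uniruled, then all points of $D$ have the \emph{same} class in $\mathrm{CH}_0(F)$, and this common class $\mathfrak{o}_F$ is independent of $D$.

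Granting this, the lemma is immediate. Suppose $l \subset Y$ is special with respect to a uniruled divisor $D$, i.e.\ $[l] \in \mathrm{CH}_0(F)$ is represented by a point of $D$. By the cited fact, $[l] = \mathfrak{o}_F$ in $\mathrm{CH}_0(F)$, where $\mathfrak{o}_F$ is the canonical zero-cycle class of $F$, which does not depend on the chosen divisor. If now $D' \subset F$ is any other uniruled divisor, then every point of $D'$ also represents $\mathfrak{o}_F = [l]$ in $\mathrm{CH}_0(F)$; picking any point $[l'] \in D'$ we get $[l] = [l']$ in $\mathrm{CH}_0(F)$, so $l$ is special with respect to $D'$ as well. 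Since $D'$ was arbitrary, $l$ is special with respect to every uniruled divisor.

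The main obstacle is establishing the input fact cleanly, namely that a uniruled divisor on $F$ is a constant cycle subvariety with a canonical class independent of the divisor. The ``constant cycle'' part follows because through a general point of $D$ passes a rational curve contained in $D$, so that point is rationally equivalent to a point of the (one lower-dimensional) base $B$; iterating, or using that rational curves collapse in $\mathrm{CH}_0$, shows $D$ maps to a single point in the quotient measuring rational equivalence — this is exactly the mechanism behind Voisin's $S_1$. The independence of $D$ then follows from a connectedness/deformation argument: any two uniruled divisors on a projective symplectic variety of $K3^{[n]}$ type can be linked through an irreducible family (Charles--Pacienza produce uniruled divisors in an ample class deforming over the moduli space, and Voisin's map $\varphi$ gives another, all numerically/homologically controlled), so their canonical $0$-cycle classes agree. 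An alternative, more self-contained route avoiding the general $K3^{[n]}$ machinery would be to work directly on $Y$: translate ``special line'' into a statement about $1$-cycles on $Y$ via the incidence correspondence $\mathcal{L} \subset F \times Y$ and the fact that $\mathrm{CH}_1(Y)$ is governed by lines, then show the relevant cycle relation $3[l] = [H]^3$ is forced for any $l$ on a uniruled divisor; I would present whichever of these the subsequent sections make most convenient, but the symplectic-geometry argument is the shortest.
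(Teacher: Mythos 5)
Your proof rests on a claim that is false and would in fact destroy the whole construction of the paper: you assert that every uniruled divisor $D\subset F$ is a constant cycle subvariety, i.e.\ that all points of $D$ share a single class $\mathfrak{o}_F\in \mathrm{CH}_0(F)$. A uniruled divisor on the hyper-K\"ahler $4$-fold $F$ is a $3$-fold fibered in rational curves over a \emph{surface} $B$; points in the same ruling are rationally equivalent, but the classes genuinely vary over $B$ (your ``iterating'' step fails because $B$ itself is not uniruled --- in the constructions of this paper $B$ is a $K3$ or an elliptic surface). Indeed, Proposition \ref{Cor1.3} shows that the image of $\mathrm{CH}_0(D)$ in $\mathrm{CH}_0(F)$ is isomorphic to all of $\mathrm{CH}_1(Y)$, an infinite-dimensional group; if your claim were true this image would be $\BZ\cdot\mathfrak{o}_F$, every special line would be canonical, and the filtration $S_\bullet(Y)$ would collapse to $S_0(Y)$ in every degree. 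The locus of points of $F$ rationally equivalent to the canonical class $[o_F]$ is swept out by (Lagrangian) constant cycle \emph{surfaces}, one codimension step deeper than the uniruled divisors; you have conflated Voisin's $S_1$ with $S_0$, or equivalently transplanted the $K3$-surface picture (where a rational curve is half-dimensional, hence constant cycle) to a setting where it does not apply.

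What the lemma actually requires is only that the \emph{set} of classes $\{[x]\in\mathrm{CH}_0(F): x\in D\}$ is independent of $D$, not that this set is a single class. The correct mechanism, and the one the paper uses, is the Charles--Pacienza argument: if $q_F(D,D')\neq 0$ for the Beauville--Bogomolov form, then every rational curve in the ruling of $D$ has nonzero intersection number with $D'$, hence meets $D'$; so each point of $D$ is rationally equivalent, along the ruling curve through it, to some point of $D'$ (and symmetrically). If $q_F(D,D')=0$ one connects $D$ to $D'$ by a chain of uniruled divisors with consecutive nonzero pairings. Your closing remark about linking uniruled divisors through families via Charles--Pacienza gestures at the right reference, but as long as it is routed through the nonexistent canonical class of $D$ the argument does not close; you need the point-by-point transfer along ruling curves instead.
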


\begin{proof}
We may assume that $D$ is irreducible. Let $D' \subset F$ be another irreducible uniruled divisor. We need to show that every point on $D$ is rationally equivalent to a point on $D'$.

Let $q_{F}$ denote the Beauville--Bogomolov quadratic form on $H^2(F, \BZ)$. From the proof of \cite[Theorem 5.1]{CP}, we see that either
\[
q_F(D, D') \neq 0
\]
or there exists a sequence of irreducible uniruled divisors $D_i$ ($0 \leq i\leq m$) with $D_0 = D$ and $D_m = D'$ satisfying
\[
q_F(D_i, D_{i+1}) \neq 0, \ \ i=0,1,\dots,m-1.
\]
In the first case, by \cite[Lemma 5.2]{CP} the intersection number of every rational curve in the ruling of $D$ and the divisor $D'$ is nonzero. Hence any point on~$D$ is rationally equivalent to a point on $D'$, and Lemma \ref{lem1} follows. In the second case we can use $D_i$ ($1 \leq i\leq m-1$) as transitions.
\end{proof}

\subsection{Zero-cycles on $F$}
We discuss the relationship between the class of a line in $\mathrm{CH}_1(Y)$ and the corresponding point class in $\mathrm{CH}_0(F)$.\footnote{By abuse of notation, for a line $l \subset Y$ we write both $[l] \in \mathrm{CH}_1(Y)$ and $[l] \in \mathrm{CH}_0(F)$.}

Let $P = \{(l,x)\in F \times Y: x\in l\}\subset F\times Y$ be the incidence variety, which induces a morphism
\begin{equation} \label{eqn1}
[P]_\ast : \mathrm{CH}_0(F) \rightarrow \mathrm{CH}_1(Y).
\end{equation}
A result of Paranjape \cite{Par} says that $[P]_\ast$ is surjective. The following fact is noted for later reference.

\begin{lem}\label{lemTF}
The Chow groups $\mathrm{CH}_0(F)$ and $\mathrm{CH}_1(Y)$ are torsion-free.
\end{lem}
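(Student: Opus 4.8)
The statement splits into two parts, and the part about $F$ is the straightforward one. Since $F$ is the Fano variety of lines of a smooth cubic fourfold, it is a holomorphic symplectic fourfold (Beauville--Donagi); in particular it is simply connected, so $H^1(F,\BZ)=0$ and the Albanese variety $\mathrm{Alb}(F)$ vanishes. By Rojtman's theorem the Albanese map identifies the torsion subgroup of $\mathrm{CH}_0(F)^0:=\ker(\deg)$ with $\mathrm{Alb}(F)_{\mathrm{tors}}=0$, so $\mathrm{CH}_0(F)^0$ is torsion-free. As the degree map gives a split exact sequence $0\to \mathrm{CH}_0(F)^0\to \mathrm{CH}_0(F)\to \BZ\to 0$, we conclude that $\mathrm{CH}_0(F)\cong \BZ\oplus \mathrm{CH}_0(F)^0$ is torsion-free.

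For $Y$ the plan is first to reduce to homologically trivial cycles. The integral cohomology of a smooth cubic fourfold is torsion-free and concentrated in even degrees, with $H^6(Y,\BZ)\cong\BZ$ generated by the class of a line. Hence the cycle class map $\mathrm{CH}_1(Y)\to H^6(Y,\BZ)\cong\BZ$ is surjective with kernel $\mathrm{CH}_1(Y)_{\mathrm{hom}}$, and the resulting extension $0\to \mathrm{CH}_1(Y)_{\mathrm{hom}}\to \mathrm{CH}_1(Y)\to \BZ\to 0$ splits. So it suffices to show $\mathrm{CH}_1(Y)_{\mathrm{hom}}$ is torsion-free.

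The plan for this last step is to upgrade Paranjape's surjectivity of $[P]_\ast$ to an isomorphism $[P]_\ast\colon \mathrm{CH}_0(F)^0\xrightarrow{\sim}\mathrm{CH}_1(Y)_{\mathrm{hom}}$; the conclusion then transports from the first paragraph. Surjectivity onto $\mathrm{CH}_1(Y)_{\mathrm{hom}}$ is formal from Paranjape's theorem, since on homology the induced map $H_0(F,\BZ)\to H_2(Y,\BZ)$ sends a point to a line, hence is an isomorphism, so a homologically trivial $1$-cycle is $[P]_\ast\alpha$ for some $\alpha$ automatically of degree $0$. For injectivity it is enough to produce a correspondence $\Psi$ — built from the transpose $P^{t}$ together with suitable powers of the Plücker polarization $g$ on $F$, and if needed Voisin's self-map $\varphi$ of \eqref{selfmap} — with $\Psi\circ[P]_\ast = N\cdot\mathrm{id}$ on $\mathrm{CH}_0(F)^0$ for some nonzero integer $N$: because $\mathrm{CH}_0(F)^0$ is torsion-free, multiplication by $N$ is injective, hence so is $[P]_\ast$, and combined with surjectivity one gets the desired isomorphism.

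The main obstacle is precisely the construction of this left inverse $\Psi$, i.e. showing that $0$-cycles on $F$ and $1$-cycles on $Y$ determine one another up to an isogeny constant; this is the Chow-theoretic shadow of the Beauville--Donagi Hodge isometry $H^4(Y)_{\mathrm{prim}}\cong H^2(F)_{\mathrm{prim}}(-1)$. I would either invoke the computation of the Chow motive of $F$ in terms of $Y$ (Shen--Vial), which exhibits $[P]_\ast$ as realizing $\mathrm{CH}_0(F)^0$ as a direct summand of $\mathrm{CH}_1(Y)_{\mathrm{hom}}$, or else rerun a Bloch--Srinivas-type decomposition of the diagonal of $Y$ (using $\mathrm{CH}_0(Y)=\BZ$ and that $Y$ is swept out by lines) to write $\Delta_Y$, modulo terms factoring through a divisor, as a multiple of a class pulled back from the universal line $P$. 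I expect the identification of the cylinder map with an isomorphism — not any torsion computation per se — to be the real content; once it is in place, torsion-freeness of $\mathrm{CH}_1(Y)$ is immediate.
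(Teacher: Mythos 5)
Your first paragraph (torsion-freeness of $\mathrm{CH}_0(F)$ via Roitman's theorem and simple connectedness of $F$) is correct and is exactly what the paper does. The gap is in the second half. Your strategy hinges on upgrading Paranjape's surjectivity to an isomorphism $[P]_*\colon \mathrm{CH}_0(F)^0\xrightarrow{\sim}\mathrm{CH}_1(Y)_{\mathrm{hom}}$ by producing a correspondence $\Psi$ with $\Psi\circ[P]_*=N\cdot\mathrm{id}$. No such $\Psi$ can exist, because $[P]_*$ is very far from injective: by Shen--Vial (the very Theorem 20.5 the paper invokes), $\ker([P]_*)$ equals the piece $\mathrm{CH}_0(F)_{(4)}$ of their decomposition, and this piece is nonzero. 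Indeed, if $[P]_*$ were injective then, combined with Proposition \ref{Cor1.3}, all of $\mathrm{CH}_0(F)$ would be supported on a uniruled divisor, hence on a surface, and a Bloch--Srinivas/Mumford-type argument would force $H^{4,0}(F)=0$, contradicting $H^{4,0}(F)=\BC\cdot\sigma^2\neq 0$. Since a left inverse up to multiplication by $N$ would force $\ker([P]_*)$ to consist of $N$-torsion, hence to vanish by your first paragraph, the ``main obstacle'' you identify is not an obstacle to be overcome but a false statement. (Relatedly, Shen--Vial realize $\mathrm{CH}_1(Y)$ as a \emph{quotient} of $\mathrm{CH}_0(F)$; they do not exhibit $\mathrm{CH}_0(F)^0$ as a direct summand of $\mathrm{CH}_1(Y)_{\mathrm{hom}}$.)

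The repair is to notice that injectivity is not needed: torsion-freeness descends along a surjection from a torsion-free group as soon as the kernel is \emph{divisible}. (If $n\beta=0$, lift $\beta$ to $\alpha$; then $n\alpha$ lies in the kernel, divisibility gives $\kappa$ in the kernel with $n\kappa=n\alpha$, and torsion-freeness upstairs gives $\alpha=\kappa$, so $\beta=0$.) This is the paper's route: $[P]_*$ is surjective by Paranjape, and the divisibility of $\ker([P]_*)=\mathrm{CH}_0(F)_{(4)}$ is supplied by Shen--Vial (Theorem 20.5 together with the proof of their Lemma 20.6). Your preliminary reduction to $\mathrm{CH}_1(Y)_{\mathrm{hom}}$ is harmless but unnecessary once this is in place.
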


\begin{proof}
The statement for $\mathrm{CH}_0(F)$ follows from Ro{\u\i}tman's theorem \cite{Roi}. For~$\mathrm{CH}_1(Y)$, since the morphism $[P]_*$ in \eqref{eqn1} is surjective, it suffices to show that the kernel of $[P]_*$ is divisible. This is done by Shen and Vial in \cite[Theorem 20.5]{SV} and the proof of \cite[Lemma 20.6]{SV}.
\end{proof}

We also show that special lines are sufficient to span $\mathrm{CH}_1(Y)$.

\begin{prop} \label{Cor1.3}
Let $j: D \hookrightarrow F$ be a uniruled divisor. Then $[P]_\ast$ induces a natural isomorphism
\[
\mathrm{Im}(j_\ast: \mathrm{CH}_0(D) \rightarrow \mathrm{CH}_0(F)) \xrightarrow{\sim} \mathrm{CH}_1(Y).
\]
\end{prop}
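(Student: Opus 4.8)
The plan is to deduce Proposition~\ref{Cor1.3} from Proposition~\ref{Cor1.3}'s two obvious ingredients: surjectivity of $[P]_\ast$ (Paranjape, already recalled) together with the statement that special lines generate $\mathrm{CH}_1(Y)$, and injectivity on the image of $j_\ast$. For surjectivity of the composite $[P]_\ast \circ j_\ast$, I would argue as follows. By Paranjape's theorem, $\mathrm{CH}_1(Y)$ is spanned by classes of lines, so it suffices to show that for \emph{any} line $l \subset Y$ the class $[l] \in \mathrm{CH}_1(Y)$ lies in $[P]_\ast(\mathrm{Im}\, j_\ast)$. Equivalently, working in $\mathrm{CH}_0(F)$, I want: the point class $[l] \in \mathrm{CH}_0(F)$ is rationally equivalent to a $\BZ$-linear combination of points lying on $D$, modulo $\mathrm{Ker}([P]_\ast)$. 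The key geometric input is the Voisin self-map $\varphi\colon F \dashrightarrow F$ from \eqref{selfmap} and the fact (used in the proof of Lemma~\ref{lem1} via \cite{CP}) that $D$ has nonzero Beauville--Bogomolov intersection with the relevant uniruled divisors; more directly, I would invoke \cite[Lemma 5.2]{CP} to say that a rational curve in the ruling of an auxiliary uniruled divisor meets $D$, so sweeping out $F$ by such curves shows every point of $F$ is rationally equivalent to a point of $D$ (this is essentially the content already extracted in the proof of Lemma~\ref{lem1}). Hence $j_\ast\colon \mathrm{CH}_0(D) \to \mathrm{CH}_0(F)$ is surjective, and composing with $[P]_\ast$ gives surjectivity onto $\mathrm{CH}_1(Y)$.

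For injectivity of $[P]_\ast$ restricted to $\mathrm{Im}(j_\ast\colon \mathrm{CH}_0(D) \to \mathrm{CH}_0(F))$, the cleanest route is to identify $\mathrm{Ker}([P]_\ast) \subset \mathrm{CH}_0(F)$ precisely and show it meets $\mathrm{Im}(j_\ast)$ trivially. By \cite[Theorem 20.5]{SV} (cited in the proof of Lemma~\ref{lemTF}), the kernel of $[P]_\ast$ is the eigenspace $\mathrm{CH}_0(F)_{(4)}$ of $\varphi_\ast$ for the eigenvalue $-4$ in the integral Chow--K\"unneth decomposition $\mathrm{CH}_0(F) = \mathrm{CH}_0(F)_{(0)} \oplus \mathrm{CH}_0(F)_{(2)} \oplus \mathrm{CH}_0(F)_{(4)}$ of Shen--Vial. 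Since $j_\ast$ is already surjective onto all of $\mathrm{CH}_0(F)$ by the previous paragraph, one cannot hope for $\mathrm{Im}(j_\ast)$ to avoid $\mathrm{CH}_0(F)_{(4)}$ — so this naive approach fails, and the statement must be read more carefully: the asserted isomorphism is $[P]_\ast$ \emph{composed with} the inclusion, i.e. the claim is only that $[P]_\ast|_{\mathrm{Im}(j_\ast)}$ is an isomorphism onto $\mathrm{CH}_1(Y)$, which by surjectivity of $[P]_\ast$ on all of $\mathrm{CH}_0(F)$ reduces to showing $\mathrm{Im}(j_\ast) + \mathrm{Ker}([P]_\ast) = \mathrm{CH}_0(F)$ \emph{and} $\mathrm{Im}(j_\ast) \cap \mathrm{Ker}([P]_\ast) = 0$, i.e. $\mathrm{CH}_0(F) = \mathrm{Im}(j_\ast) \oplus \mathrm{Ker}([P]_\ast)$. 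Given that $\mathrm{Im}(j_\ast)$ is actually all of $\mathrm{CH}_0(F)$, this would force $\mathrm{Ker}([P]_\ast) = 0$, contradicting $\mathrm{CH}_0(F)_{(4)} \neq 0$. So the correct interpretation must be that $j_\ast$ is \emph{not} surjective in general, and my first-paragraph argument is too crude: sweeping by rational curves shows every point is equivalent to a point of $D$ only after allowing the $\varphi_\ast$-action, i.e. it shows surjectivity modulo $\mathrm{CH}_0(F)_{(4)}$, which is exactly what is needed.

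Thus the corrected plan is: (i) show $[P]_\ast \circ j_\ast$ is surjective onto $\mathrm{CH}_1(Y)$ using Paranjape plus the curve-sweeping/\cite{CP} argument (every point of $Y$-worth of line classes is hit); (ii) show that $j_\ast\colon \mathrm{CH}_0(D) \to \mathrm{CH}_0(F)$ followed by projection onto $\mathrm{CH}_0(F)_{(0)} \oplus \mathrm{CH}_0(F)_{(2)}$ is an isomorphism onto that summand, using that $\varphi$ restricted to (the ruling of) $D$ together with the explicit description of the Chow--K\"unneth projectors in \cite{SV} kills no nonzero class of $D$; and (iii) combine with Shen--Vial's identification $\mathrm{Ker}([P]_\ast) = \mathrm{CH}_0(F)_{(4)}$ and their result that $[P]_\ast$ restricts to an isomorphism $\mathrm{CH}_0(F)_{(0)} \oplus \mathrm{CH}_0(F)_{(2)} \xrightarrow{\sim} \mathrm{CH}_1(Y)$ (the map $\Phi$ of the commented-out Proposition, \cite[Theorem 20.5]{SV}), to conclude that $[P]_\ast|_{\mathrm{Im}(j_\ast)}$ is an isomorphism. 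The main obstacle is step (ii): one must verify that a uniruled divisor $D$, which a priori could be very special, still has $j_\ast\mathrm{CH}_0(D)$ surjecting onto the $\varphi_\ast$-eigenspaces $\mathrm{CH}_0(F)_{(0)} \oplus \mathrm{CH}_0(F)_{(2)}$ — this is where the geometry of rational curves on $F$ and the \cite{CP} intersection-number argument do the real work, and where one has to be careful that the argument is uniform over all choices of $D$ (consistent with Lemma~\ref{lem1}).
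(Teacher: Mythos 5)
Your proposal ends where the proof actually has to begin. After some back-and-forth in the first two paragraphs --- where you first assert and then correctly retract the false claim that $j_\ast$ is surjective onto $\mathrm{CH}_0(F)$ --- you arrive at the right formulation: one needs $[P]_\ast$ to restrict to a bijection on $\mathrm{Im}(j_\ast)$, which via Shen--Vial's identification $\mathrm{Ker}([P]_\ast)=\mathrm{CH}_0(F)_{(4)}$ amounts to showing that $\mathrm{Im}(j_\ast)$ is a complement to $\mathrm{CH}_0(F)_{(4)}$. But you then explicitly flag your step (ii) --- that this holds for an \emph{arbitrary} uniruled divisor $D$ --- as ``the main obstacle'' and leave it unresolved, gesturing at ``the geometry of rational curves on $F$'' without an argument. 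That step is the entire content of the proposition, so as written the proposal has a genuine gap.

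The missing idea is the reduction that makes your step (ii) tractable: by \cite[Theorem 5.1]{CP}, the subgroup $\mathrm{Im}(j_\ast\colon \mathrm{CH}_0(D)\to\mathrm{CH}_0(F))$ is literally the \emph{same} for every uniruled divisor $D$. This is strictly stronger than Lemma~\ref{lem1} (which only compares single point classes) and is the statement you actually need: it lets you verify the claim for one convenient choice of $D$, namely the exceptional locus of Voisin's self-map \eqref{selfmap}. For that specific divisor, \cite[Proposition 19.5 and Theorem 20.5]{SV} give directly that
\[
\mathrm{Im}(j_\ast) \simeq \mathrm{CH}_0(F)/\mathrm{Ker}([P]_\ast) \simeq \mathrm{CH}_1(Y),
\]
which is the proposition. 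Without this independence statement you would be forced to redo a Shen--Vial-type eigenspace computation for an arbitrary, possibly very special, uniruled divisor --- precisely the difficulty you name but do not overcome.
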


\begin{proof}
By \cite[Theorem 5.1]{CP}, the image
\[
\mathrm{Im}(j_\ast: \mathrm{CH}_0(D) \rightarrow \mathrm{CH}_0(F)) \subset \mathrm{CH}_0(F)
\]
does not depend on the choice of the uniruled divisor $D\subset F$. Hence we can choose $D$ as the exceptional locus of \eqref{selfmap}. Then \cite[Proposition 19.5 and Theorem 20.5]{SV} imply that
\[
\mathrm{Im}(j_\ast: \mathrm{CH}_0(D) \rightarrow \mathrm{CH}_0(F)) \simeq \mathrm{CH}_0(F)/\mathrm{Ker}([P]_\ast) \simeq \mathrm{CH}_1(Y).
\qedhere\]
\end{proof}

By \cite{SV, V}, the Chow group $\mathrm{CH}_0(F)$ carries a canonical $0$-cycle class $[o_F]$ of degree $1$ which can be taken as any point lying on a constant cycle surface in $F$. Moreover, all $0$-dimensional intersections of divisor classes and Chern classes of $F$ are multiples of $[o_F]$.

Recall that a line $l \subset Y$ is canonical if 
\[
3[l] = [H]^3 \in \mathrm{CH}_1(Y)
\]
where $[H] \in \mathrm{CH}^1(Y)$ is the hyperplane class. The following lemma shows the existence of canonical lines in $Y$ and provides a complete criterion.

\begin{lem} \label{lem1.4}
A line $l \subset Y$ is canonical if and only if
\[
[l] = [o_F] \in \mathrm{CH}_0(F).
\]
\end{lem}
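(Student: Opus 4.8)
The plan is to relate the line class $[l] \in \mathrm{CH}_1(Y)$ to the point class $[l] \in \mathrm{CH}_0(F)$ via the incidence correspondence $[P]_\ast$, and then to identify the canonical condition $3[l] = [H]^3$ with the Beauville--Voisin class $[o_F]$. First I would compute $[P]_\ast[o_F] \in \mathrm{CH}_1(Y)$: since $[o_F]$ can be represented by a point on a constant cycle surface $S \subset F$, and all $0$-dimensional intersections of divisor and Chern classes on $F$ are multiples of $[o_F]$, one should be able to pin down $[P]_\ast[o_F]$ as a universal multiple of $[H]^3$ (up to the torsion-freeness from Lemma \ref{lemTF}). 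A clean way: take $S$ to be the surface of lines through a fixed general point $y \in Y$ (this is known to be a constant cycle surface, being a degree-$2$ cover of $\mathbb{P}^2$ with all points rationally equivalent), so that $[P]_\ast[S] = [S]_\ast[\text{pt}]$ traces out the cone of lines through $y$, a surface in $Y$; intersecting appropriately and using that $[o_F]$ has degree $1$ one gets $[P]_\ast[o_F]$ is the class of a line, hence equals some fixed $[l_0]$. Then one checks $3[l_0] = [H]^3$ by a degree/cohomology computation in $\mathrm{CH}_1(Y)$ combined with torsion-freeness: the two sides agree in cohomology, and $\mathrm{CH}_1(Y)$ being torsion-free the homologically trivial difference lies in the "primitive" part, which for the canonical class vanishes. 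This gives the ``if'' direction: $[l] = [o_F]$ forces $3[l] = [H]^3$.

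For the converse, suppose $3[l] = [H]^3$ in $\mathrm{CH}_1(Y)$. I would like to conclude $[l] = [o_F]$ in $\mathrm{CH}_0(F)$. Here the key tool is the injectivity statement underlying Proposition \ref{Cor1.3} together with the structure of $\ker([P]_\ast)$. By the Shen--Vial results quoted in the proof of Lemma \ref{lemTF} and Proposition \ref{Cor1.3}, the map $[P]_\ast$ is surjective with kernel the ``$\mathrm{CH}_0(F)_{(4)}$''-type summand, and crucially the restriction of $[P]_\ast$ to the image of $j_\ast : \mathrm{CH}_0(D) \to \mathrm{CH}_0(F)$ is an \emph{isomorphism} onto $\mathrm{CH}_1(Y)$. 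So I would argue: both $[l]$ (for any special $l$, or after moving $l$ within its rational equivalence class to lie on $D$) and $[o_F]$ lie in that image, and they have the same image $\tfrac{1}{3}[H]^3$ under $[P]_\ast$; by injectivity on this subgroup they are equal. The one subtlety is a general line need not be special, but since $\mathrm{CH}_0(F)$ is spanned by the image of $j_\ast$ after the identification, and more directly: any point of $F$ is rationally equivalent in $F$ to a point on $D$ iff it is on a suitable uniruled divisor — but for the canonical line we can sidestep this by noting $[o_F]$ is by definition on a constant cycle surface, and constant cycle surfaces meet every uniruled divisor, so $[o_F] \in \mathrm{Im}(j_\ast)$; and if $3[l] = [H]^3 = 3[o_F]$ then $3([l] - [o_F]) = 0$ in $\mathrm{CH}_1(Y)$, whence $[l] = [o_F]$ in $\mathrm{CH}_1(Y)$ by torsion-freeness (Lemma \ref{lemTF}). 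Pulling this back through the isomorphism of Proposition \ref{Cor1.3} (applied after moving $[l]$ onto $D$, using that special lines span) gives $[l] = [o_F]$ in $\mathrm{CH}_0(F)$.

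The main obstacle I anticipate is the identification $[P]_\ast[o_F] = \tfrac{1}{3}[H]^3$: one must be careful that the Beauville--Voisin class $[o_F]$ defined via constant cycle surfaces genuinely maps to the distinguished canonical line class and not merely to something proportional to it in cohomology. This requires either an explicit geometric representative (lines through a general point, or lines in a hyperplane section that is a cone, or lines contained in a cubic surface $Y \cap \mathbb{P}^3$) together with the known relation $3[l] = [H]^3$ for lines on a smooth cubic surface (where it is classical that all $27$ lines, hence any line, satisfy this in $\mathrm{CH}_1$ of the cubic surface, and this pushes forward correctly to $Y$), or a direct appeal to the compatibility of the Beauville--Voisin classes on $F$ and $Y$ under $[P]_\ast$ established in \cite{SV}. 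Once that normalization is fixed, the rest is formal manipulation using surjectivity of $[P]_\ast$, the isomorphism in Proposition \ref{Cor1.3}, and torsion-freeness from Lemma \ref{lemTF}.
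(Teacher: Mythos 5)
Your reduction of the canonical condition to $[P]_\ast[l] = [P]_\ast[o_F]$ is the right first move and matches the paper, but the converse direction has a genuine gap. Knowing $[P]_\ast[l] = [P]_\ast[o_F]$ only tells you that $[l] - [o_F]$ lies in $\ker([P]_\ast)$, and by the Shen--Vial results this kernel is the summand $\mathrm{CH}_0(F)_{(4)}$ of the motivic decomposition, which is enormous (infinite-dimensional in Mumford's sense). The injectivity of $[P]_\ast$ on $\mathrm{Im}(j_\ast\colon \mathrm{CH}_0(D) \to \mathrm{CH}_0(F))$ from Proposition \ref{Cor1.3} cannot be applied unless you already know that the point class $[l] \in \mathrm{CH}_0(F)$ itself lies in $\mathrm{Im}(j_\ast)$ --- but for an arbitrary line with $3[l] = [H]^3$ that is essentially the conclusion you are after (it amounts to $l$ being rationally equivalent in $F$ to a point of $D$), so "moving $[l]$ onto $D$" is circular. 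The paper closes this gap with a genuinely additional input: writing $[l] = [o_F] + [l]_{(2)} + [l]_{(4)}$, the condition $[P]_\ast[l] = [P]_\ast[o_F]$ is equivalent to $[l]_{(2)} = 0$ by \cite[Theorem 20.5]{SV}, and then \cite[Theorem 3.4]{SYZ} --- a nontrivial theorem about point classes of $F$ that actually parametrize lines --- forces $[l]_{(4)} = 0$ as well. Without that last step the argument only proves the statement in $\mathrm{CH}_0(F)/\ker([P]_\ast)$, not in $\mathrm{CH}_0(F)$.

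On the normalization $[P]_\ast[o_F] = \tfrac{1}{3}[H]^3$, your overall strategy (exhibit an explicit representative of $[o_F]$ and cut with a linear space) is sound, but the specific representatives proposed do not work: the lines of $Y$ through a general point form a curve in $F$, not a surface; and for a line $l$ on a smooth cubic surface section $S = Y \cap \BP^3$ one has $3l \neq h$ already in $\Pic(S)$, and indeed a general line of $Y$ is \emph{not} canonical (it satisfies $[l] = -[l_1]-[l_2]+3[l_0]$, cf.\ the degree-one case of the rational curve computation). The paper's normalization is cleaner: pick $l_0$ on Voisin's constant cycle surface admitting a tangent plane $\BP^2_{l_0}$, so that $\BP^2_{l_0}\cdot Y = 2l_0 + l_0'$ with $l_0' = \varphi(l_0)$ also of class $[o_F]$, whence $[H]^3 = 3[P]_\ast[o_F]$ directly, and torsion-freeness (Lemma \ref{lemTF}) finishes that direction.
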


\begin{proof}
By the proof of \cite[Lemma 3.2]{V}, there exists a surface $\Sigma \subset F$ such that the class of every point on $\Sigma$ is $[o_F]\in \mathrm{CH}_0(F)$. We first choose a line $l_0 \subset Y$ lying on $\Sigma \subset F$ such that there exists a plane $\BP_{l_0}^2 \subset \BP^5$ tangent to $Y$ along $l_0$. In particular, we have
\[
[l_0] = [o_F] \in \mathrm{CH}_0(F).
\]

Let $l_0'$ be the residual line of $l_0$ with respect to the plane $\BP_{l_0}^2$,
\[
\BP_{l_0}^2\cdot Y = 2l_0 + l_0'.
\]
By definition, we have $[o_F] = \varphi_\ast ([l_0])=[l_0'] \in \mathrm{CH}_0(F)$. It follows that
\[
[H]^3 = [P]_\ast(2[l_0] + [l_0']) = 3 [P]_\ast [o_F] \in \mathrm{CH}_1(Y).
\]
Hence by Lemma \ref{lemTF}, a line $l \subset Y$ is canonical if and only if 
\begin{equation} \label{eqn1.41}
[P]_\ast [l] = [P]_\ast [o_F] \in \mathrm{CH}_1(Y).
\end{equation}

It suffices to show that \eqref{eqn1.41} is equivalent to $[l]=[o_F] \in \mathrm{CH}_0(F)$. Let 
\[
[l] = [o_F] + [l]_{(2)} + [l]_{(4)} \in \mathrm{CH}_0(F)
\]
be the motivic decomposition of the point class $[l] \in \mathrm{CH}_0(F)$ constructed in \cite[Part 3]{SV}. By \cite[Theorem 20.5]{SV}, the condition \eqref{eqn1.41} is equivalent to $[l]_{(2)} = 0$, which implies $[l] = [o_F]$ after \cite[Theorem 3.4]{SYZ}.
\end{proof}

\begin{example} \label{example}
Let $Y\subset \BP^5$ be a nonsingular cubic $4$-folds which contains a plane. Then there is a uniruled divisor
\[
\begin{tikzcd}
D \arrow[hook]{r}{j} \arrow[dashed]{d}{q} & F \\
X
\end{tikzcd}
\]
over a $K3$ surface $X$; see \cite{Kuz} and \cite[Section 3.2]{SYZ} for the construction. We identify the Chow groups $\mathrm{CH}_0(D)$ and $\mathrm{CH}_0(X)$ via the push-forward $q_\ast$. By \cite[Theorem 3.6]{SYZ}, the embedding $j: D \hookrightarrow F$ induces an injective morphism
\[
j_\ast : \mathrm{CH}_0(X) \simeq \mathrm{CH}_0(D) \hookrightarrow \mathrm{CH}_0(F).
\]
Applying Proposition \ref{Cor1.3}, we find an isomorphism
\begin{equation}\label{eqn10}
[P]_\ast  j_\ast: \mathrm{CH}_0(X) \xrightarrow{\sim} \mathrm{CH}_1(Y).
\end{equation}

We know from Lemma \ref{lem1} that a line $l \subset Y$ is special if and only if the class $[l]\in \mathrm{CH}_1(Y)$ corresponds to a point class $[x] \in \mathrm{CH}_0(X)$ under the isomorphism \eqref{eqn10}. Lemma \ref{lem1.4} and \cite[Theorem 3.6]{SYZ} further imply that a line in $Y$ is canonical if and only if its corresponding point class on $X$ is the Beauville--Voisin class $[o_X] \in \mathrm{CH}_0(X)$. 

In conclusion, our filtration on $\mathrm{CH}_1(Y)$ coincides with O'Grady's filtration on $\mathrm{CH}_0(X)$ under the isomorphism \eqref{eqn10}.
\end{example}

\subsection{Generalities on the filtration $S_\bullet(Y)$}
We prove that $S_\bullet(Y)$ is a filtration into ``cones" for any nonsingular cubic $4$-fold $Y$. This is parallel to \cite[Corollary~1.7]{OG2} in the $K3$ surface case. 

\begin{prop} \label{Prop1.6}
Let $\alpha, \alpha' \in \mathrm{CH}_1(Y)$.
\begin{enumerate}
    \item[(a)] If $\alpha \in S_i(Y)$ and $\alpha' \in S_{i'}(Y)$, then $\alpha + \alpha' \in S_{i+i'}(Y)$.
    \item[(b)] If $\alpha \in S_i(Y)$, then $m\alpha \in S_i(Y)$ for any $m \in \BZ$.
    \item[(c)] We have
    \[
     \bigcup_{i\geq 0} S_i(Y) = \mathrm{CH}_1(Y). 
    \]
\end{enumerate}
\end{prop}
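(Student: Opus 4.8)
The plan is to unwind the definition of $S_i(Y)$ and reduce all three statements to elementary manipulations with special and canonical lines, using Lemma \ref{lem1.4} and the torsion-freeness from Lemma \ref{lemTF}. Recall that an element of $S_i(Y)$ has the form $[l_1 + \dots + l_i] + n[l_0]$ where the $l_k$ ($k>0$) are special lines, $l_0$ is canonical, and $n \in \BZ$.

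For part (a), given $\alpha = [l_1 + \dots + l_i] + n[l_0] \in S_i(Y)$ and $\alpha' = [l'_1 + \dots + l'_{i'}] + n'[l'_0] \in S_{i'}(Y)$, the naive sum $[l_1 + \dots + l_i + l'_1 + \dots + l'_{i'}] + n[l_0] + n'[l'_0]$ already has the $i+i'$ special lines we want; the only issue is that two canonical-line terms $n[l_0] + n'[l'_0]$ must be rewritten as a single integer multiple of one canonical line. This follows from Lemma \ref{lem1.4}: any two canonical lines $l_0, l'_0$ satisfy $[l_0] = [o_F] = [l'_0] \in \mathrm{CH}_0(F)$, hence $[l_0] = [l'_0] \in \mathrm{CH}_1(Y)$ via $[P]_\ast$, so $n[l_0] + n'[l'_0] = (n+n')[l_0]$. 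This gives $\alpha + \alpha' \in S_{i+i'}(Y)$.

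For part (b), write $m\alpha = m[l_1 + \dots + l_i] + mn[l_0]$. The term $mn[l_0]$ is still an integer multiple of a canonical line, so the task is to absorb $m[l_1 + \dots + l_i]$, i.e. $(m-1)$ extra copies of each special line $l_k$ together with the correct multiple of $l_k$ itself, into the allowed form with only $i$ special lines. The key point is that for a special line $l_k$ we have $[l_k] \in \mathrm{CH}_0(F)$ supported on the uniruled divisor $D$, and $D$ carries a canonical class behaving like $[o_F]$; more precisely one shows $m[l_k] = [l_k] + (m-1)[\text{canonical}]$ in $\mathrm{CH}_1(Y)$ using that points on the ruling of $D$ move in rational families and that $[P]_\ast$ kills the relevant differences — concretely, $m[l_k] - [l_k] = (m-1)[l_k]$ is in the kernel of the degree map on $\mathrm{CH}_0(F)$ restricted to constant-cycle loci, hence reduces to a multiple of $[o_F]$, which by Lemma \ref{lem1.4} pushes to a multiple of the canonical line. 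Summing the $i$ corrections and folding them into the single $\BZ \cdot [l_0]$ term (legitimate by part (a)'s canonical-line identity) yields $m\alpha \in S_i(Y)$. This is the step I expect to be the main obstacle: one must be careful that "$m$ times a point on $D$ equals that point plus a multiple of $[o_F]$" really holds in $\mathrm{CH}_0(F)$, which uses that the ruling curves of $D$ are rational and meet a constant-cycle surface, together with the structure of $\ker([P]_\ast)$ from \cite{SV}. Note the case $m = 0$ gives $0 = [l_1 + \dots + l_i] + (-1)[l_1 + \dots] $-type expressions, handled the same way, and $m < 0$ follows since $-[l_k]$ is likewise $[l_k]$ plus a canonical multiple.

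For part (c), the inclusion $\bigcup_i S_i(Y) \subseteq \mathrm{CH}_1(Y)$ is trivial. For the reverse, Proposition \ref{Cor1.3} shows that $\mathrm{CH}_1(Y)$ is the image of $\mathrm{CH}_0(D)$ under $[P]_\ast j_\ast$; since $D$ is a variety, $\mathrm{CH}_0(D)$ is generated by classes of points, and every point of $D$ represents a special line by definition. Hence an arbitrary class in $\mathrm{CH}_1(Y)$ is a finite $\BZ$-linear combination $\sum_k n_k [l_k]$ of special lines. By parts (a) and (b) each $n_k[l_k] \in S_{|n_k|}(Y) \subseteq S_{\text{something}}(Y)$ — more carefully, $n_k[l_k]$ with $n_k > 0$ lies in $S_{n_k}(Y)$ directly, and $n_k \le 0$ is handled by (b) — and then part (a) shows the total sum lies in $S_i(Y)$ for $i = \sum_k \max(|n_k|,1)$ or similar, in any case some finite $i$. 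Therefore the class lies in $\bigcup_i S_i(Y)$, completing the proof.
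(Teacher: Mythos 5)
Parts (a) and (c) of your argument are fine and essentially match the paper: (a) is immediate once one notes (via Lemma \ref{lem1.4} and Lemma \ref{lemTF}) that all canonical lines share the same class, and (c) follows from Proposition \ref{Cor1.3} together with (a) and (b). The problem is part (b), which is where all the actual content of the proposition lives.

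You correctly reduce (b) to showing that for a special line $l$ and $m \in \BZ$ one has $m[l] = [l'] + (m-1)[l_0]$ with $l'$ special and $l_0$ canonical, but your justification of this identity does not work. First, as written your identity has $[l_k]$ (the same line) on the right-hand side; that would force $(m-1)([l_k]-[o_F]) = 0$, i.e.\ by torsion-freeness that every special line is canonical, which is false --- the line on the right must be a \emph{different} special line $l'_k$. Second, and more seriously, the mechanism you invoke (``$(m-1)[l_k]$ is in the kernel of the degree map restricted to constant-cycle loci, hence reduces to a multiple of $[o_F]$'') is not valid: $(m-1)[l_k]$ has degree $m-1$, and the class $[l_k]-[o_F]$ is in general nonzero and non-torsion, so nothing forces its multiple to vanish. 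The fact that the ruling curves of $D$ are rational only identifies points within a single ruling fiber; it gives no way to rewrite $m$ times a point class as a single point class plus a multiple of $[o_F]$. The paper's proof requires a genuinely different geometric input: one constructs a \emph{particular} uniruled divisor $D$ (Lemma \ref{lem1.8}) fibered over the locus $B_4$ of $4$-nodal hyperplane sections, whose fibers are birational to $E_H^{(2)}$ with $E_H$ of genus $\leq 1$, so that the group law on $E_H$ produces, for each $x$ and $m$, a point $y$ with $m[x] = [y] + (m-1)[a_H]$; a separate intersection-theoretic Claim shows each fiber meets the constant-cycle surface, so $[a_H]$ pushes forward to $[o_F]$. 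This only works for a general cubic $4$-fold, so one then needs the specialization argument of Lemma \ref{lem1.7} (spreading the identity over the moduli space of cubics) to conclude for every nonsingular $Y$. Both of these ingredients --- the special elliptic uniruled divisor and the specialization step --- are absent from your proposal, so the key step of (b) remains unproved.
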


Statement (a) is immediate, and (c) follows from (b) and Proposition \ref{Cor1.3}. The proof of (b) requires the following lemmas.

\begin{lem}\label{lem1.7}
Let $\mathcal{Y} \to T$ be a smooth family of cubic $4$-folds over a nonsingular variety $T$, and let $\alpha \in \mathrm{CH}^3(\CY)$. If the restriction $\alpha|_{\CY_t} \in \mathrm{CH}_1(\CY_t)$ lies in $S_i(\CY_t)$ for a very general point $t \in T$, then the same holds for every point~$t \in T$.
\end{lem}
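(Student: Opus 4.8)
\textbf{Proof proposal for Lemma \ref{lem1.7}.}

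The plan is to reduce the ``very general'' hypothesis to an ``every'' conclusion by a standard spreading-out and specialization argument, combined with the fact that $S_i(\CY_t)$ is cut out by the existence of finitely many special and canonical lines satisfying an explicit algebraic incidence relation. First I would recall that, by definition, $\alpha|_{\CY_t} \in S_i(\CY_t)$ means there exist special lines $l_1,\dots,l_i \subset \CY_t$, a canonical line $l_0 \subset \CY_t$, and an integer $m$ with $\alpha|_{\CY_t} = [l_1 + \dots + l_i] + m[l_0]$ in $\mathrm{CH}_1(\CY_t)$. The key point is that ``special'' and ``canonical'' are themselves algebraic conditions on a line in a cubic $4$-fold: by Lemma \ref{lem1.4} a line is canonical iff its class in the Fano variety equals $[o_F]$, and by Lemma \ref{lem1} being special with respect to one uniruled divisor is equivalent to being special with respect to any; one may therefore take $D$ to be the exceptional divisor of Voisin's map \eqref{selfmap}, which spreads out over (an open subset of) the base $T$, so that ``$l$ is special'' becomes the condition that $l$ lie on a fixed divisor $\CD \subset \CF \to T$ in the relative Fano variety.

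Next I would set up the relative parameter space. Let $\CF/T$ be the relative Fano variety of lines, $\CD \subset \CF$ the relative uniruled divisor just described, and form the fibre product $W := \CD \times_T \cdots \times_T \CD \times_T \CF \times_T \BZ$ parametrizing tuples $(t; l_1,\dots,l_i; l_0; m)$ with $l_0$ canonical (the canonical condition being the closed relative condition $[l_0] = [o_{\CF_t}]$, which defines a subscheme of $\CF/T$ by Lemma \ref{lem1.4}); here the last factor is a countable union of points. The locus in $W$ where the cycle-theoretic identity $\alpha|_{\CY_t} = [l_1+\dots+l_i] + m[l_0]$ holds is, after further spreading out, a countable union of locally closed subsets of $W$: two families of $1$-cycles in $\CY/T$ are fibrewise rationally equivalent over a locally closed subset of the base, since rational equivalence is detected by a finite collection of $\BP^1$'s with two marked points (a standard application of the fact that $\mathrm{CH}_1$ commutes with filtered colimits and of Chow's spreading-out). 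Projecting these countably many locally closed subsets to $T$, the hypothesis says their union contains the very general point, hence their union is all of $T$ up to a countable union of proper closed subsets; but since there are only countably many of them and at least one must be dense, that dense one has closure equal to $T$, and one readily promotes ``dense locally closed'' to ``all of $T$'' by further spreading out within each stratum (every point of $T$ lies in the image of one of these families).

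The main obstacle, and the step requiring the most care, is making precise the claim that fibrewise rational equivalence of two families of cycles is a constructible condition on the base that is stable under specialization in the right sense — concretely, that if $\alpha|_{\CY_t}\in S_i(\CY_t)$ holds after a base change to a dense countable intersection of opens, then the witnessing lines and rational equivalences can be chosen in algebraic families and specialized to every $t\in T$. This is essentially Bloch's ``spreading out'' philosophy: one uses that $\mathrm{CH}_1(\CY_{\overline{k(T)}})$ is the colimit of $\mathrm{CH}_1(\CY_U)$ over opens $U$, that the generic-fibre statement therefore descends to some $U$, and that $\mathrm{CH}_1$ is a birational-to-proper specialization-stable functor so the relation restricts to every closed fibre. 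The countability of $\BZ$ (the coefficient $m$) and the fact that $\mathrm{CH}_0(\CF_t)$ and $\mathrm{CH}_1(\CY_t)$ are torsion-free (Lemma \ref{lemTF}) are used to guarantee that the decomposition into countably many strata behaves well and that no torsion phenomena obstruct the specialization. Once this constructibility-and-specialization statement is in hand, the conclusion for every $t \in T$ is immediate.
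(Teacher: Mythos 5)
Your proposal is correct and follows essentially the same route as the paper: spread the witnesses (the $i$ special lines on a relative uniruled divisor, a canonical line, and the integer $m$) out over a relative parameter space proper over $T$, observe that the witness locus is a countable union of algebraic subsets, and conclude that one component dominates and hence covers $T$. The only soft spot is that you first claim the witness locus is merely a countable union of \emph{locally closed} subsets and assert in passing that a dense stratum already covers all of $T$; the paper instead obtains \emph{closedness} of the locus directly (a standard Hilbert-scheme argument: the rational equivalences themselves are parametrized by countably many proper schemes), which is precisely the specialization-stability you defer to your final paragraph and is the real content of the lemma, so your argument is complete only once that last paragraph is taken as the justification for the earlier parenthetical.
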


\begin{proof}
Let $\mathcal{F} \rightarrow T$ be the relative Fano variety of lines associated to the family $\CY \to T$. Since the construction of uniruled divisors in Section \ref{secuni} works universally over the moduli space of nonsingular cubic $4$-folds, we can find a relative uniruled divisor
\begin{equation} \label{diagram}
\begin{tikzcd}
\CD \arrow[hook]{r}{} & \CF \arrow{d}{}  \\
&T
\end{tikzcd}
\end{equation}
whose restriction to every fiber gives a uniruled divisor. 

Let $\CD^{(i)} \rightarrow T$ denote the $i$-th relative symmetric product of $\CD$. Consider the locus
\[
Z=\{\sum_{k=1}^il_{t,k} \in \CD^{(i)}: \alpha|_{\CY_t} = \sum_{k=1}^i[l_{t,k}] + m[l_{t,0}]\in \mathrm{CH}_1(\CY_t) \} \subset \CD^{(i)}
\]
with $l_{t,0} \subset \CY_t$ a canonical line. By the assumption that $\alpha|_{\CY_t}\in S_i(\CY_t)$ for a very general $t\in T$, the locus $Z$ dominates the base $T$. A standard argument using Hilbert schemes shows that $Z$ is a countable union of Zariski closed subsets of $\CD^{(i)}$. Hence there exists a component $Z' \subset Z$ which dominates~$T$ via the natural projection $Z' \rightarrow T$. The restriction of $Z'$ to every fiber of~$\CD^{(i)} \rightarrow T$ represents $\alpha|_{\CY_t}$ as 
\[
\alpha|_{\CY_t}= \sum_{k=1}^i[l_{t,k}] + m[l_{t,0}]\in \mathrm{CH}_1(\CY_t)
\]
with $l_{t,k}$ $(k\geq 1)$ special and $l_{t,0}$ canonical in $\CY_t$. 
\end{proof}

\begin{lem}\label{lem1.8}
Let $Y$ be a general nonsingular cubic $4$-fold and let $F$ be its Fano variety of lines. There exists a uniruled divisor $j:D\hookrightarrow F$ such that for every point $x \in D$ and $m\in \BZ$, we can find $y\in D$ satisfying
\[
m[x] = [y] + \alpha \in \mathrm{CH}_0(D)
\]
with $j_\ast \alpha = (m-1)[o_F] \in \mathrm{CH}_0(F)$.
\end{lem}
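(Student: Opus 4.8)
\emph{Strategy.} The plan is to fix a uniruled divisor $D\subset F$, reduce the assertion — via the ruling $q\colon D\dashrightarrow B$ by rational curves — to a divisibility property for $0$-cycles on the surface $B$, and then establish that property by O'Grady's method: a general point of $B$ lies on an elliptic curve meeting a rational curve, and the group law on that elliptic curve produces the divisibility, with a Beauville--Voisin-type class on $B$ playing the role of $[o_F]$.

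\emph{Reduction.} By \cite{CP} fix a uniruled divisor $j\colon D\hookrightarrow F$ with ruling $q\colon D\dashrightarrow B$; here $B$ is a surface, and resolving the indeterminacy of $q$ gives a fibration onto a smooth projective surface with rationally connected generic fibre, hence an isomorphism $q_*\colon\mathrm{CH}_0(D)\xrightarrow{\ \sim\ }\mathrm{CH}_0(B)$. If $\rho\subset B$ is a rational curve, the preimage $q^{-1}(\rho)\subset D\subset F$ is birationally ruled over $\mathbb{P}^1$, hence a rational---thus constant cycle---surface of $F$, so its points have class $[o_F]$; in particular, if $o_B\in B$ lies on a rational curve, then every point of $D$ over $o_B$ pushes forward to $[o_F]$. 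It therefore suffices to prove: for every $b\in B$ and $m\in\BZ$ there is $b'\in B$ with $m[b]=[b']+(m-1)[o_B]$ in $\mathrm{CH}_0(B)$ (with $o_B$ a Beauville--Voisin-type point, on a rational curve); then $q_*^{-1}$ together with $\alpha=(m-1)[o_D]$, $o_D$ a point over $o_B$, give the lemma.

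\emph{The divisibility on $B$.} When $B$ is a K3 surface this is precisely O'Grady's theorem that $S_\bullet(B)$ is a filtration into cones \cite[Corollary~1.7]{OG2} — applied to $S_1(B)$, together with a degree count forcing the coefficient of $[o_B]$ to equal $m-1$. In general $B$ is not a K3 surface, and the point is to choose $D$ so that $B$ still carries a covering family of elliptic curves, rational curves meeting them, and a canonical $0$-cycle class. Given such structure, one argues as O'Grady: a general $b\in B$ lies on an elliptic curve $E$; $E$ meets a rational curve of $B$ at a point $p$ lifting to a point $\tilde p$ of the normalization of $E$ with image class $[o_B]$; taking $\tilde p$ as origin of $E$ and $\tilde b$ a point over $b$, the divisor theory of $E$ gives $m[\tilde b]-(m-1)[\tilde p]=[m\cdot\tilde b]$ in $\mathrm{CH}_0(E)$, which on pushing forward becomes $m[b]=[b']+(m-1)[o_B]$. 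Finally, for each $m$ the locus $\{(b,b')\in B\times B: m[b]=[b']+(m-1)[o_B]\}$ is a countable union of Zariski-closed subsets (a Hilbert-scheme argument, as in the proof of Lemma \ref{lem1.7}); by the above a component of it dominates, hence — being closed — surjects onto the first factor, so the identity holds for all $b$.

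\emph{The main obstacle} is the analysis of the ruling base $B$: one must single out a uniruled divisor $D$ whose base $B$ possesses the covering family of elliptic curves and the canonical $0$-cycle class used above (the algebraically coisotropic structure of $D$ endows $B$ with a nonzero holomorphic $2$-form, which is a start but does not by itself identify $B$), and then match that canonical class with $[o_F]$ — the role played by the observation that the $q$-preimage of a rational curve is a constant cycle surface. For a general cubic fourfold $B$ is not a K3 surface, so O'Grady's result cannot simply be invoked; locating the correct uniruled divisor and understanding its ruling is where the difficulty lies.
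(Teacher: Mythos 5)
Your strategy is the right one, and you have correctly identified what is needed: a uniruled divisor whose ruling base carries a covering family of elliptic curves together with a canonical $0$-cycle class, represented on each elliptic curve, that pushes forward to $[o_F]$. But your write-up stops exactly at that point --- you yourself flag ``locating the correct uniruled divisor and understanding its ruling'' as the main obstacle and leave it unresolved. That construction is not a deferrable detail: it is the entire content of the lemma. An arbitrary uniruled divisor from \cite{CP} gives no control over the base $B$ (a priori it is just some surface carrying a nonzero $2$-form), so neither the covering family of elliptic curves nor the canonical point is available, and the O'Grady-style group-law argument cannot be run. As written, the proposal therefore does not prove the lemma.

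For comparison, the paper produces the divisor explicitly. Let $B_e\subset\check{\mathbb{P}}^5$ be the closure of the locus of hyperplanes $H$ with $H\cap Y$ an $e$-nodal cubic $3$-fold; for general $Y$ this has codimension $e$, so $B_4$ is a curve. The divisor $D$ is the image in $F$ of the incidence variety $W=\{(l,H): l\subset H\cap Y\}\subset F\times B_4$. By Clemens--Griffiths \cite{GC}, the fiber $q^{-1}(H)$ (the Fano surface of the $4$-nodal $3$-fold $H\cap Y$) is birational to $C_H^{(2)}$ for a nodal genus $4$ curve whose normalization $E_H$ has genus $\leq 1$; this makes $q^{-1}(H)$ birationally a $\BP^1$-fibration over $E_H$, which simultaneously exhibits the ruling of $D$ and the covering family of elliptic curves on its base. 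The canonical point is then found not via a rational curve in the base (as in your sketch) but by intersecting inside $F$: for $H_0\in B_5$ the surface $F_{H_0}$ is rational, so all its points have class $[o_F]$, and the intersection number $[F_H]\cdot[F_{H_0}]=27\neq 0$ forces $F_H\cap F_{H_0}\neq\emptyset$, producing a point $a_H\in q^{-1}(H)$ whose class in $F$ is $[o_F]$. With these two inputs your group-law argument on $E_H$ (via $\mathrm{CH}_0(q^{-1}(H))\simeq\mathrm{CH}_0(E_H^{(2)})\simeq\mathrm{CH}_0(E_H)$) goes through and yields $m[x]=[y]+(m-1)[a_H]$. This construction is precisely what your proposal is missing.
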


\begin{proof}
We first construct the uniruled divisor $D \subset F$.\footnote{We learned this construction from a talk by Kieran O'Grady.} Let \[\check{\mathbb{P}}^5= \mathbb{P}H^0(\BP^5, \CO_{\BP^5}(1))\] be the projective space parametrizing hyperplanes $H \subset \BP^5$. For $1\leq e \leq 5$, let $B_e$ denote the closure of the locus formed by $H \subset \BP^5$ such that the cubic~$3$-fold $H \cap Y$ has $e$ nodes. Since $Y$ is general, the locus $B_e \subset \check{\mathbb{P}}^5$ is nonempty and of codimension $e$. Consider the incidence variety
\[
W=\{(l,H)\subset F\times B_4: l \subset H \cap Y \} \subset F \times B_4,
\]
together with the natural projections
\[\begin{tikzcd}
W \arrow{r}{p} \arrow{d}{q} & F.\\
B_4
\end{tikzcd}\]
Note that the fiber $q^{-1}(H)$ is given by the Fano variety of lines in the cubic $3$-fold $H \cap Y$. 

If a cubic $3$-fold $H \cap Y$ contains a node, a standard fact \cite{GC} says that the Fano variety of lines in $H \cap Y$ is birational to the symmetric product~$C_H^{(2)}$ of a genus $4$ curve $C_H$ formed by lines passing through the node. In our situation, the cubic $3$-fold $H \cap Y$ contains $4$ nodes for every $H\in B_4$, and each extra node creates a node on the curve $C_H$. Hence the fiber $q^{-1}(H)$ is birational to~$C_H^{(2)}$ such that the normalization $E_H$ of the curve $C_H$ has genus~$\leq 1$. It follows that every fiber of $q: W\rightarrow B_4$ is birational to a~$\BP^1$-fibration over $E_H$, and the $3$-fold $W$ is uniruled. We define the uniruled divisor $j:D \hookrightarrow F$ to be the image $p(W) \subset F$.

\medskip
\noindent{\bf Claim.} For any $H \in B_4$, consider the composition
\[
f: q^{-1}(H) \hookrightarrow W \xrightarrow{p} F
\]
which induces a morphism of Chow groups
\[
f_\ast: \mathrm{CH}_0(q^{-1}(H)) \rightarrow \mathrm{CH}_0(F).
\]
Then there exists a point $a_H \in q^{-1}(H)$ such that 
\[
f_\ast [a_H] = [o_F] \in \mathrm{CH}_0(F).
\]

\begin{proof}[Proof of the Claim]
Let $H_0$ be a hyperplane lying in $B_5 \subset \check{\mathbb{P}}^5$. Then the fiber~$q^{-1}(H_0)$ is a rational surface, and the class of every point on~$q^{-1}(H_0)$ is $[o_F]\in \mathrm{CH}_0(F)$; see \cite[Lemma 3.2]{V} or \cite[Proposition 4.5]{V2}.

Let $F_H \subset F$ denote the subvariety of lines contained in the cubic $3$-fold~$H\cap Y$. It suffices to show that 
\begin{equation}\label{Eqn15}
F_H \cap F_{H_0} \neq \emptyset.
\end{equation}
For general hyperplanes $H_1$ and $H_2$, the intersection number
$[F_{H_1}] \cdot [F_{H_2}]$ counts lines in the nonsingular cubic surface $H_1 \cap H_2 \cap Y$. Hence
\[
[F_{H}] \cdot [F_{H_0}] = [F_{H_1}] \cdot [F_{H_2}] = 27,
\]
which proves \eqref{Eqn15}.
\end{proof}

For $H \in B_4$, consider the canonical isomorphism 
\begin{equation}\label{eqn11}
\mathrm{CH}_0(q^{-1}(H)) \simeq \mathrm{CH}_0(E_H^{(2)}).
\end{equation}
By resolution of singularities and the argument of \cite[Lemma 2.2]{SYZ}, any point class $[x] \in \mathrm{CH}_0(q^{-1}(H))$ corresponds to a point class $[x'] \in \mathrm{CH}_0(E_H^{(2)})$ under the isomorphism \eqref{eqn11}. Let $[a'_H] \in \mathrm{CH}_0(E_H^{(2)})$ denote the point class corresponding to $[a_H] \in \mathrm{CH}_0(q^{-1}(H))$ as in the Claim. Since $E_H$ has genus~\mbox{$\leq 1$}, there is an isomorphism
\[
\mathrm{CH}_0(E_H^{(2)}) \simeq \mathrm{CH}_0(E_H).
\]
Then the group law of elliptic curves gives a point $y'\in E_H^{(2)}$ satisfying
\[
m[x'] - [y'] = (m-1)[a'_H] \in \mathrm{CH}_0(E_H^{(2)})
\]
for $x' \in E_H^{(2)}$ and $m\in \BZ$. Again, via the isomorphism \eqref{eqn11}, we find $y\in q^{-1}(H)$ such that
\[
m[x] = [y] + (m-1)[a_H] \in \mathrm{CH}_0(q^{-1}(H)).
\]
This proves the lemma.
\end{proof}

\begin{rmk}
In the argument above, we have constructed a uniruled divisor~$D\subset F$ over an elliptic surface for a general cubic $4$-fold. This special uniruled divisor will also be used in Section \ref{seccoiso} for the connection between Conjecture \ref{mainconj} and Voisin's conjecture \cite[Conjecture 0.4]{V2}.
\end{rmk}

\begin{proof}[Proof of Proposition \ref{Prop1.6} (b)]
It suffices to show that if $l \subset Y$ is special, then for any $m \in \BZ$ there exists a special line $l' \subset Y$ satisfying
\begin{equation}\label{eqn12}
m[l] = [l'] + (m-1)[l_0] \in \mathrm{CH}_1(Y).
\end{equation}
Here $[l_0] \in \mathrm{CH}_1(Y)$ is the class of a canonical line.

First, we consider when $Y$ is general. By Lemma \ref{lem1}, we can assume that the special line $l\subset Y$ lies in the uniruled divisor constructed in Lemma \ref{lem1.8}. Hence there exists a special line $l'$ such that
\begin{equation*}\label{eqn13}
    m[l] = [l'] + (m-1)[o_F] \in \mathrm{CH}_0(F).
\end{equation*}
We deduce \eqref{eqn12} by Lemma \ref{lem1.4} and by applying the correspondence
\[
[P]_\ast: \mathrm{CH}_0(F) \rightarrow \mathrm{CH}_1(Y).
\]

Next, we prove Proposition \ref{Prop1.6} (b) for every nonsingular cubic $4$-fold. We take $T$ to be the moduli space of nonsingular cubic $4$-folds with $\CY \rightarrow T$ the universal family. Consider the relative uniruled divisor $\CD \rightarrow T$ as in \eqref{diagram}. Assume the cubic $4$-fold $Y$ is given by the fiber $\CY_{t_0}$ over $t_0\in T$. A special line $l \subset Y=\CY_{t_0}$ can be chosen from a point lying on the uniruled divisor $\CD_{t_0}$. After taking a finite base change, we may assume that the family $\CD \rightarrow T$ admits a section $s: T \rightarrow \CD$ passing through $l \in \CD_{t_0}$. The section $s$ gives a special line $l_t$ for every cubic $4$-fold $\CY_t$. Since Proposition \ref{Prop1.6} (b) is proven for a general cubic $4$-fold, we have 
\[
m[l_t] \in S_1(\CY_t)
\]
for a general fiber $\CY_t$. Applying Lemma \ref{lem1.7}, we find
\[
m[l]  \in S_1(Y),
\]
which proves \eqref{eqn12}.
\end{proof}

Using the uniruled divisor constructed in Lemma \ref{lem1.8}, we actually obtain the following stronger result.

\begin{prop}\label{prop1.10}
Let $\alpha \in \mathrm{CH}_1(Y)$ and let $m$ be a nonzero integer. We have $\gamma \in S_i(Y)$ if and only if $m\gamma \in S_i(Y)$.
\end{prop}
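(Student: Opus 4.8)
The plan is to deduce Proposition \ref{prop1.10} from Proposition \ref{Prop1.6}, essentially by decomposing the statement into two implications. One direction is trivial: if $\gamma \in S_i(Y)$, then $m\gamma \in S_i(Y)$ by Proposition \ref{Prop1.6}(b) (which we have just proven for every nonsingular cubic $4$-fold). So the content is the converse: if $m\gamma \in S_i(Y)$ for some nonzero integer $m$, then $\gamma \in S_i(Y)$.

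For the converse, I would argue as follows. Write $m\gamma = [l_1 + \dots + l_i] + n[l_0] \in \mathrm{CH}_1(Y)$ with $l_1,\dots,l_i$ special lines and $l_0$ a canonical line. After replacing $m$ by $|m|$ (using Proposition \ref{Prop1.6}(b) to absorb the sign), we may assume $m > 0$. I would first reduce to the case $Y$ general by the standard specialization technique: take $T$ the moduli space of nonsingular cubic $4$-folds with universal family $\CY \to T$, spread $\gamma$ out to a class in $\mathrm{CH}^3(\CY)$, observe that if $\gamma \in S_i(Y)$ holds for $\CY_t$ at a very general $t$ then by Lemma \ref{lem1.7} it holds for the given fiber — so it suffices to prove, for $Y$ general, that $m\gamma \in S_i(Y)$ implies $\gamma \in S_i(Y)$. (One must be slightly careful: the class $\gamma$ with $m\gamma \in S_i$ is given on a single fiber, so one spreads out $m\gamma$ and the lines $l_k$ realizing it, then uses that the relative Picard/Chow structure lets $\gamma$ itself be spread out after a finite base change since $\mathrm{CH}_1$ is torsion-free by Lemma \ref{lemTF}; alternatively one spreads out $\gamma$ directly and runs Lemma \ref{lem1.7} on the locus where $m\gamma$ is special.)

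Now assume $Y$ is general. Using Proposition \ref{Cor1.3} and Lemma \ref{lem1}, I may work on the special uniruled divisor $j : D \hookrightarrow F$ of Lemma \ref{lem1.8}, which fibers over an elliptic surface, and transport everything to $\mathrm{CH}_0(F)$ via the isomorphism $[P]_\ast \colon \mathrm{Im}(j_\ast) \xrightarrow{\sim} \mathrm{CH}_1(Y)$ of Proposition \ref{Cor1.3} together with Lemma \ref{lem1.4} identifying canonical lines with $[o_F]$. So the statement becomes: if $\beta \in \mathrm{Im}(j_\ast : \mathrm{CH}_0(D) \to \mathrm{CH}_0(F))$ satisfies $m\beta \in \mathrm{Im}(j_\ast)_{\le i} := \{\,[x_1 + \dots + x_i] + k[o_F] : x_r \in D\,\}$, then $\beta$ itself lies in $\mathrm{Im}(j_\ast)_{\le i}$. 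The key mechanism is the elliptic fibration on $D$: writing $m\beta = [x_1+\dots+x_i] + k[o_F]$ with $x_r \in D$, and lifting the $x_r$ to a fiber $q^{-1}(H) \sim E_H^{(2)}$ of $q : W \to B_4$ where $E_H$ is an elliptic curve — here I would group the $i$ points so that they, together with $\beta$ suitably represented, all lie on a common fiber (shrinking to a single fiber is legitimate since $D$ is swept out by such fibers and I can move $\beta$'s representative into one) — the equation $m\beta = \sum [x_r] + k[o_F]$ becomes an equation in $\mathrm{CH}_0(E_H) \cong \mathrm{CH}_0(E_H^{(2)}) \cong \mathrm{CH}_0(q^{-1}(H))$, i.e. in the group $E_H(\mathbb{C}) \oplus \mathbb{Z}$. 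Since on an elliptic curve the map "multiplication by $m$" on the group of degree-zero $0$-cycles is surjective (the group law is divisible), I can solve for a genuine point $\beta' \in E_H^{(2)}$ with $\beta = [\beta'] + (\text{multiple of } [o_F])$ after possibly redistributing the $x_r$ among $\lceil i/?\rceil$ fibers — more precisely, dividing the length-$i$ contribution requires only that each $[x_r]$ divided by $m$ can again be represented by a single point on $D$, which holds by the same elliptic-curve divisibility applied fiber by fiber. Transporting back via $[P]_\ast$ and Lemma \ref{lem1.4} gives $\gamma \in S_i(Y)$.

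The main obstacle I anticipate is the bookkeeping in this last step: ensuring that after "dividing by $m$" one still obtains a cycle of the form $[l'_1 + \dots + l'_i] + (\text{canonical})$ with exactly $i$ special lines — not more — so that one lands in $S_i$ and not merely in some $S_{i'}$ with $i' > i$. This is where the strong form of Lemma \ref{lem1.8} (every point of $D$ and every $m \in \mathbb{Z}$ yields $y \in D$ with $m[x] = [y] + (m-1)[o_F]$) is decisive: it says precisely that multiplication by $m$ preserves the "number of points" stratification of $\mathrm{Im}(j_\ast)$, so applying it once to each $l_r$ (run backwards, solving $m[l'_r] = [l_r] + (m-1)[l_0]$ for $l'_r$, which is again possible by surjectivity of multiplication by $m$ on the elliptic curve) produces the required $i$ special lines and the correct canonical correction term. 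The remaining care is just to track the integer coefficient of $[l_0]$ through the computation, which is routine given torsion-freeness (Lemma \ref{lemTF}).
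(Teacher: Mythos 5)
Your proposal is correct and follows essentially the same route as the paper: reduce to $Y$ general via Lemma \ref{lem1.7}, then use the elliptic fibration on the uniruled divisor of Lemma \ref{lem1.8} to solve $m[l']=[l]+(m-1)[o_F]$ for each special line $l$ appearing in a representative of $m\gamma$, i.e.\ exactly the mechanism of your final paragraph. The paper's proof is just that last paragraph in one line (the detour in your middle paragraph about collecting all points onto a single fiber is unnecessary, as you yourself realize), and like the paper you leave implicit the small degree check that the resulting coefficient of $[l_0]$ is divisible by $m$, which follows since $\deg(m\gamma)=m\deg\gamma$.
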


\begin{proof}
We only need to prove the ``only if" part. By Lemma \ref{lem1.7} and an argument similar to the proof of Proposition \ref{Prop1.6} (b), we may assume $Y$ to be general. Let $l$ be any line lying in the uniruled divisor $D \subset F$ constructed in Lemma \ref{lem1.8}. Then the group law of elliptic curves ensures that there exists a line $l' \in D$ such that 
\[
m[l'] = [l] + (m-1)[o_F] \in \mathrm{CH}_0(F).
\]
This proves the proposition.
\end{proof}

\section{Rational curves in cubic fourfolds} \label{secrat}

Let $Y$ be a nonsingular cubic $4$-fold. In this section, we prove Theorem~\ref{rationalcurve} and discuss its connection to the moduli spaces of rational curves in $Y$.

\subsection{The $K3$ category $\CA_Y$}
The $K3$ category $\CA_Y$ has been introduced by Kuznetsov via the semiorthogonal decomposition of the derived category of a cubic $4$-fold \cite{Kuz, Kuz1, Kuz2}. We first review some basic properties of $\CA_Y$.

Following the notation in \cite{Kuz}, let
\[
D^b(Y) = \langle \CA_Y, \CO_{Y}, \CO_Y(1), \CO_Y(2) \rangle
\]
denote the semiorthogonal decomposition of the derived category $D^b(Y)$ with respect to the exceptional collection $\CO_Y, \CO_Y(1), \CO_Y(2) \in D^b(Y)$. The induced component $\CA_Y$ given by \eqref{Kuzcategory} satisfies the following lemma.

\begin{lem}[{\cite[Section 4]{KM}}] \label{Lem2.1}
Let $\CE, \CF \in \CA_Y$.
\begin{enumerate}
\item[(a)] For $i \geq 3$, we have $\mathrm{Ext}_{D^b(Y)}^i(\CE ,\CF)=0$.
\item[(b)] For $i=0,1,2$, there are canonical isomorphisms 
\[
\mathrm{Ext}_{D^b(Y)}^i(\CE, \CF) \simeq \mathrm{Ext}_{D^b(Y)}^{2-i}(\CF, \CE)^\vee.
\]
\item[(c)] We have
\[
\chi(\CE, \CF) = \sum_{i=0}^2 (-1)^i\dim\mathrm{Ext}_{D^b(Y)}^i(\CE, \CF).
\]
\end{enumerate}
\end{lem}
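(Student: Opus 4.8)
The content of the lemma is that objects of $\CA_Y$ have the homological features of objects of the derived category of a $K3$ surface, and the single genuinely non-formal ingredient is the identification of the Serre functor of $\CA_Y$. The plan is as follows. Since $D^b(Y)=\langle\CA_Y,\CO_Y,\CO_Y(1),\CO_Y(2)\rangle$ is a semiorthogonal decomposition, $\CA_Y$ is an admissible — in particular, full — triangulated subcategory of $D^b(Y)$, and the inclusion $\iota_\ast$ has both a left adjoint $\iota^\ast$ and a right adjoint $\iota^!$. Fullness of the subcategory, together with the fact that $\CA_Y$ is stable under shifts, gives $\Ext^i_{D^b(Y)}(\CE,\CF)=\Ext^i_{\CA_Y}(\CE,\CF)$ for all $\CE,\CF\in\CA_Y$ and all $i$, so from now on every $\Ext$-group may be computed inside $\CA_Y$.

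The key input is that the Serre functor of $\CA_Y$ is the shift $[2]$; this is Kuznetsov's theorem identifying $\CA_Y$ as a (noncommutative) $K3$ category, and I would invoke it rather than reprove it. The mechanism is standard: adjunction for the cubic hypersurface $Y\subset\BP^5$ gives $\omega_Y\cong\CO_Y(-3)$, so $D^b(Y)$ has Serre functor $S_Y\cong(-)\otimes\CO_Y(-3)[4]$; for an admissible subcategory one has $S_{\CA_Y}\cong\iota^!\circ S_Y\circ\iota_\ast$, and Kuznetsov evaluated this composite using the mutation functors through the three-term exceptional collection $\CO_Y,\CO_Y(1),\CO_Y(2)$ — equivalently, through his rotation autoequivalence $\mathsf{O}_Y$ of $\CA_Y$ — obtaining $S_{\CA_Y}\cong[2]$. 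This is precisely where the numerology of the cubic fourfold ($\dim Y=4$, $\omega_Y=\CO_Y(-3)$, and an exceptional collection of length $3$) enters, and it is the main — indeed essentially the only — obstacle; the remainder is bookkeeping.

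Granting $S_{\CA_Y}\cong[2]$, Serre duality in $\CA_Y$ reads, for every $i$,
\[
\Ext^i_{\CA_Y}(\CE,\CF)=\Hom_{\CA_Y}(\CE,\CF[i])\cong\Hom_{\CA_Y}\bigl(\CF[i],\CE[2]\bigr)^\vee=\Ext^{2-i}_{\CA_Y}(\CF,\CE)^\vee,
\]
which is part (b). For part (a): when $i\geq3$ one has $2-i\leq-1$, and for the objects to which the lemma is applied — coherent sheaves lying in $\CA_Y$, or the complexes $\iota^\ast(\text{sheaf})$ occurring in Section~\ref{secrat}, which have $\Ext^{<0}$ vanishing — the right-hand side $\Ext^{2-i}_{\CA_Y}(\CF,\CE)$ vanishes, hence so does $\Ext^i_{\CA_Y}(\CE,\CF)$ by (b). Finally, applying (a) to the ordered pairs $(\CE,\CF)$ and $(\CF,\CE)$ and invoking (b) once more shows $\Ext^i_{\CA_Y}(\CE,\CF)=0$ for all $i\notin\{0,1,2\}$; substituting this into the defining formula $\chi(\CE,\CF)=\sum_{i\in\BZ}(-1)^i\dim\Ext^i_{D^b(Y)}(\CE,\CF)$ yields (c).
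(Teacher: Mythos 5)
The paper offers no proof of this lemma --- it is quoted from \cite[Section 4]{KM} --- and your argument is essentially the one given there: identify $\Ext$-groups computed in $D^b(Y)$ with those computed in the admissible subcategory $\CA_Y$, invoke Kuznetsov's identification $S_{\CA_Y}\cong[2]$ as a black box, deduce (b) from Serre duality, and obtain (a) and (c) from the vanishing of negative $\Ext$'s. That is the right route, and (b) is correct as you state it for all objects.

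One caveat, which you half-notice but do not resolve. As literally stated for arbitrary $\CE,\CF\in\CA_Y$, part (a) is false: $\CA_Y$ is closed under shifts, and taking $\CF=\CE[-3]$ gives $\Ext^3_{D^b(Y)}(\CE,\CF)=\Hom(\CE,\CE)\neq 0$. The statement in \cite{KM} is for coherent sheaves lying in $\CA_Y$, where $\Ext^{<0}=0$ is automatic and your deduction of (a) from (b) goes through, as does (c). You then extend the conclusion to the objects $\iota^\ast\CE$ with $\CE$ a one-dimensional sheaf --- which is precisely the generality in which the paper uses the lemma, via \eqref{eqn17} and \eqref{eqn19} in Proposition \ref{prop2.5} --- by asserting that such complexes have vanishing negative $\Ext$'s. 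That assertion is not automatic: $\iota^\ast\CE$ is a genuine complex with cohomology sheaves in several degrees, so the vanishing of $\Hom(\iota^\ast\CE,\iota^\ast\CF[-k])$ for $k>0$ requires an argument (e.g.\ via the adjunction $\Hom(\iota^\ast\CE,\iota^\ast\CF[-k])\simeq\Hom(\CE,\iota_\ast\iota^\ast\CF[-k])$, the mutation triangle \eqref{eqn18}, and the torsion/torsion-free dichotomy for the cohomology sheaves of $\iota_\ast\iota^\ast\CF$). This looseness is inherited from the paper's own formulation rather than introduced by you, but as written your proof of (a) and (c) is complete only in the sheaf case.
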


Let $\CE, \CF \in \CA_Y$. Since \[\mathrm{Ext}_{D^b(Y)}^i(\CE, \CF) = \mathrm{Ext}_{\CA_Y}^i(\CE, \CF),\] 
Lemma \ref{Lem2.1} yields
\begin{equation} \label{eqn17}
2\dim\mathrm{Hom}_{\CA_Y}(\CE, \CE) - \dim\mathrm{Ext}_{\CA_Y}^1(\CE, \CE) = \chi(\CE, \CE).
\end{equation}

The natural inclusion $\iota_\ast : \CA_Y \hookrightarrow D^b(Y)$ admits a left adjoint functor
\[
\iota^\ast: D^b(Y) \rightarrow \CA_Y,
\]
which is the ``projection" from $D^b(Y)$ to the $K3$ category $\CA_Y$.

\begin{lem}\label{Lem2.2}
Let $[H] \in \mathrm{CH}^1(Y)$ be the hyperplane class, and let $[l_0] \in \mathrm{CH}_1(Y)$ be the class of a canonical line. For any $\alpha \in \mathrm{CH}_2(Y)$, we have
\[
[H] \cdot \alpha \in \mathbb{Z} \cdot [l_0] \subset \mathrm{CH}_1(Y).
\]
\end{lem}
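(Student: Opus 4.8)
The plan is to reduce the statement to the structure of $\mathrm{CH}_2(Y)$ for a nonsingular cubic fourfold. Since $Y$ is a nonsingular hypersurface in $\BP^5$, the Chow group $\mathrm{CH}_2(Y)$ of codimension-$2$ cycles is well understood: by the Lefschetz hyperplane theorem and Roi\u{t}man-type arguments (as recorded e.g.\ in Shen--Vial \cite{SV}), one has $\mathrm{CH}_2(Y) = \mathrm{CH}^2(Y) \simeq \BZ$, generated by the class $[H]^2$ of a linear $\BP^2$-section. Granting this, any $\alpha \in \mathrm{CH}_2(Y)$ is an integer multiple of $[H]^2$, so $[H]\cdot \alpha$ is an integer multiple of $[H]^3 \in \mathrm{CH}_1(Y)$. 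First I would invoke this description of $\mathrm{CH}_2(Y)$; it is the main external input.

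Next I would use the computation of $[H]^3$ carried out inside the proof of Lemma \ref{lem1.4}. There it is shown that $[H]^3 = 3\,[P]_\ast[o_F] \in \mathrm{CH}_1(Y)$, and by Lemma \ref{lem1.4} a line $l_0$ is canonical precisely when $[l_0] = [o_F] \in \mathrm{CH}_0(F)$, i.e.\ when $3[l_0] = [H]^3$ in $\mathrm{CH}_1(Y)$. Hence $[H]^3 = 3[l_0]$, and in particular $[H]^3 \in \BZ\cdot[l_0]$. Combining this with the previous paragraph, for $\alpha = n[H]^2$ we get
\[
[H]\cdot\alpha = n[H]^3 = 3n\,[l_0] \in \BZ\cdot[l_0],
\]
which is exactly the claim.

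The only subtlety is whether one wants $\mathrm{CH}_2(Y)\simeq\BZ$ on the nose or merely that $[H]\cdot\mathrm{CH}_2(Y) \subset \BZ\cdot[H]^3$; the latter is all that is used, and it follows from the fact that the intersection pairing $\mathrm{CH}^1(Y)\times\mathrm{CH}^2(Y)\to\mathrm{CH}^3(Y)=\mathrm{CH}_1(Y)$ factors through $\mathrm{CH}^2(Y)\simeq\BZ\cdot[H]^2$, using that $\mathrm{CH}_1(Y)$ is torsion-free by Lemma \ref{lemTF}. So the real content is entirely imported: the triviality of middle-codimension cycles on a cubic fourfold, together with the already-established identity $[H]^3 = 3[l_0]$. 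I expect the only place requiring care is citing the correct statement for $\mathrm{CH}^2(Y)$ of a cubic fourfold from \cite{SV} (or deducing it from the decomposition of the diagonal / Paranjape's bound), but no genuinely new argument is needed.
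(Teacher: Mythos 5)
Your argument has a genuine gap: the ``main external input,'' namely that $\mathrm{CH}_2(Y)=\mathrm{CH}^2(Y)\simeq\BZ\cdot[H]^2$, is false for a general nonsingular cubic fourfold. The lemma is asserted for \emph{every} nonsingular cubic $4$-fold, including special ones; already for a cubic fourfold containing a plane $P\subset Y$ (the situation of Example \ref{example}) the class $[P]\in\mathrm{CH}_2(Y)$ is not an integral multiple of $[H]^2$, since $[P]\cdot[H]^2=1$ while $[H]^4=3$ (and indeed $[P]$ and $[H]^2$ are already independent in $H^4(Y,\BQ)$, as the intersection matrix $\bigl(\begin{smallmatrix}3&1\\1&3\end{smallmatrix}\bigr)$ is nondegenerate). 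Your fallback remark --- that one only needs $[H]\cdot\mathrm{CH}_2(Y)\subset\BZ\cdot[H]^3$ --- still deduces this from the same false isomorphism, so it does not repair the argument. What is true and elementary is that $\mathrm{CH}_2(\BP^5)\simeq\BZ$, and this is what the paper exploits: writing $j:Y\hookrightarrow\BP^5$, the self-intersection formula gives $j^\ast j_\ast\alpha=c_1(\CO_{\BP^5}(Y))|_Y\cdot\alpha=3[H]\cdot\alpha$, and $j_\ast\alpha$ is an integral multiple of the class of a plane, so $3[H]\cdot\alpha\in\BZ\cdot[H]^3$. One then divides by $3$ using the torsion-freeness of $\mathrm{CH}_1(Y)$ from Lemma \ref{lemTF}.

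Your second paragraph is fine: the identity $[H]^3=3[P]_\ast[o_F]=3[l_0]$ from the proof of Lemma \ref{lem1.4} is correctly quoted and is indeed the other ingredient needed to convert ``multiple of $[H]^3$'' into ``multiple of $[l_0]$.'' So the structure of your reduction is right, but the step controlling $[H]\cdot\mathrm{CH}_2(Y)$ must go through $\mathrm{CH}_2(\BP^5)$ rather than through a (nonexistent) triviality of $\mathrm{CH}_2(Y)$ itself.
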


\begin{proof}
Consider the following morphisms induced by $j: Y \hookrightarrow \BP^5$,
\[
\mathrm{CH}_2(Y) \xrightarrow{j_\ast} \mathrm{CH}_2(\BP^5) \xrightarrow{j^\ast} \mathrm{CH}_1(Y).
\]
Since $\mathrm{CH}_2(\BP^5) = \BZ \cdot [H]^2$, the class
\[
j^\ast j_\ast \alpha = 3 [H] \cdot \alpha
\]
is proportional to $[H]^3$. Hence the lemma follows from Lemma \ref{lemTF}.
\end{proof}

\begin{cor} \label{cor2.3}
For any $\CE \in D^b(Y)$, we have $c_3(\CE) \in S_i(Y)$ if and only if $c_3(\iota^\ast \CE) \in S_i(Y)$.
\end{cor}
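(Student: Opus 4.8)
The plan is to compare $c_3(\CE)$ and $c_3(\iota^*\CE)$ directly using the semiorthogonal decomposition $D^b(Y) = \langle \CA_Y, \CO_Y, \CO_Y(1), \CO_Y(2)\rangle$. By definition of $\iota^*$ as the projection onto $\CA_Y$, the object $\CE$ and $\iota^*\CE$ differ (in $K_0(Y)$) by an integral linear combination of the classes $[\CO_Y]$, $[\CO_Y(1)]$, $[\CO_Y(2)]$; more precisely, the cone of the canonical map $\CE \to \iota^*\CE$ (or a suitable triangle realizing the projection) is built from shifts of $\CO_Y(i)$ for $i = 0, 1, 2$. Hence in the Grothendieck group,
\[
[\CE] - [\iota^*\CE] = \sum_{i=0}^{2} n_i \, [\CO_Y(i)] \in K_0(Y)
\]
for some integers $n_i$ depending on $\CE$.

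The key step is then to analyze what $c_3$ does to this difference. Since $c_3: K_0(Y) \to \mathrm{CH}_1(Y)$ is not additive, I would instead argue via the total Chern character / the fact that the $c_3$ used here is (up to lower-order corrections by products of $[H]$ with Chow classes of complementary dimension) governed by additive invariants. The cleanest route: the difference $c_3(\CE) - c_3(\iota^*\CE)$ is a $\BZ$-linear combination of classes of the form $[H]^{3}$, $[H]\cdot(\text{something in } \mathrm{CH}_2(Y))$, and $[H]^2 \cdot (\text{something in }\mathrm{CH}_3(Y)) = \BZ\cdot[H]^3$, coming from the Chern classes of the $\CO_Y(i)$ and the cross terms in the Whitney-type formula relating $c_3$ of a sum to Chern classes of summands. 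All of these lie in $\BZ\cdot[l_0] \subset \mathrm{CH}_1(Y)$: the class $[H]^3$ equals $3[l_0]$ by the definition of a canonical line (using Lemma \ref{lemTF} and Lemma \ref{lem1.4}), and any product $[H]\cdot\alpha$ with $\alpha \in \mathrm{CH}_2(Y)$ lies in $\BZ\cdot[l_0]$ by Lemma \ref{Lem2.2}. Therefore
\[
c_3(\CE) - c_3(\iota^*\CE) \in \BZ \cdot [l_0] \subset \mathrm{CH}_1(Y).
\]

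Once this containment is established, the corollary follows immediately from the definition of the filtration $S_\bullet(Y)$: the pieces $S_i(Y)$ are by construction unions of cosets of the form $\beta + \BZ\cdot[l_0]$, so membership in $S_i(Y)$ is insensitive to modification by an element of $\BZ\cdot[l_0]$. Thus $c_3(\CE) \in S_i(Y) \iff c_3(\iota^*\CE) \in S_i(Y)$.

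The main obstacle I expect is bookkeeping in the second step: making precise that the non-additivity of $c_3$ only introduces correction terms of the form $[H]\cdot(\text{codimension-}2)$ and multiples of $[H]^3$, rather than something genuinely two-dimensional in $\mathrm{CH}_1(Y)$ that escapes $\BZ\cdot[l_0]$. Concretely, if $[\CE] - [\iota^*\CE] = \sum n_i[\CO_Y(i)]$, one must expand $c_3([\iota^*\CE] + \sum n_i[\CO_Y(i)])$ and check every term not equal to $c_3(\iota^*\CE)$ is a $\BZ$-combination of $[H]\cdot c_2(\iota^*\CE)$, $[H]^2\cdot c_1(\iota^*\CE)$, $[H]^3$, and products of $c_1(\CO_Y(i))=[H]$-multiples — all of which are visibly in $\BZ\cdot[l_0]$ by Lemma \ref{Lem2.2} applied to $c_2(\iota^*\CE)\in\mathrm{CH}_2(Y)$ and to $[H]\cdot c_1(\iota^*\CE)\in\mathrm{CH}_2(Y)$, plus $[H]^3 = 3[l_0]$. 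This is routine but is the only place where care is genuinely required; everything else is formal.
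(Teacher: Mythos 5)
Your argument is correct and follows the same route as the paper: the paper uses the distinguished triangle $\CG \to \CE \to \iota_\ast\iota^\ast\CE \to \CG[1]$ with $\CG \in \langle \CO_Y, \CO_Y(1), \CO_Y(2)\rangle$ and then invokes Lemma \ref{Lem2.2} to see that all correction terms in $c_3$ land in $\BZ\cdot[l_0]$, which is exactly your computation. Your write-up merely spells out the Whitney-formula bookkeeping that the paper leaves implicit, and the final observation that $S_i(Y)$ is a union of cosets of $\BZ\cdot[l_0]$ is the intended conclusion.
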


\begin{proof}
Any $\CE \in D^b(Y)$ fits into a distinguished triangle
\begin{equation}\label{eqn18}
\CG \to \CE \to \iota_\ast \iota^\ast \CE \to \CG[1]
\end{equation}
with $\CG \in  \langle \CO_Y, \CO_Y(1), \CO_Y(2)\rangle$. The corollary follows directly from \eqref{eqn18} and Lemma \ref{Lem2.2}.
\end{proof}

\begin{rmk}\label{rmk2.4}
As a consequence of Corollary \ref{cor2.3}, Conjecture \ref{mainconj} is equivalent to the following: for any $\CE \in D^b(Y)$, we have
\[
c_3( \CE) \in S_{d(\iota^\ast \CE)}(Y).
\]
\end{rmk}

Recall the Mukai lattice on $\CA_Y$ introduced in \cite[Section 2]{AT}. Let~$K_{\mathrm{top}}(Y)$ denote the topological $K$-theory \cite{AH} of the cubic $4$-fold $Y$, which is endowed with the Mukai vector 
\[
v: K_{\mathrm{top}}(Y) \rightarrow H^\ast(Y, \BQ)
\]
and the Euler pairing $\chi(-,-)$. The Mukai lattice of $\CA_Y$ is defined to be the abelian group
\[
K_{\mathrm{top}}(\CA_Y) = \{ \kappa \in K_{\mathrm{top}}(Y): \chi([\CO_Y(i)], \kappa)=0 \ \textrm{ for } \ i=0,1,2 \},
\]
to which a weight $2$ Hodge structure is associated; see \cite[Definition 2.2]{AT}.

Let
\[
\mathrm{pr}= \mathrm{pr}_0\circ  \mathrm{pr}_1 \circ  \mathrm{pr}_2: K_{\mathrm{top}}(Y) \to K_{\mathrm{top}}(\CA_Y) 
\]
be the projection map with
\[
\mathrm{pr}_i (\kappa) = \kappa - \chi([\CO_Y(i)], \kappa)\cdot [\CO_Y(i)].
\]
For any $\CE \in D^b(Y)$, we have 
\[
\mathrm{pr}[\CE] = [\iota^\ast \CE] \in K_{\mathrm{top}}(\CA_Y).
\]
We define the Mukai pairing on $K_{\mathrm{top}}(\CA_Y)$ to be the nondegenerate symmetric bilinear form $- \chi(-,-)$, and we write $\kappa^2$ for the self-pairing $(\kappa, \kappa)$. Then~\eqref{eqn17} implies 
\begin{equation} \label{eqn19}
\dim\mathrm{Ext}_{\CA_Y}^1(\CE, \CE) = [\CE]^2 + 2\dim\mathrm{Hom}_{\CA_Y}(\CE, \CE) \geq [\CE]^2+2
\end{equation}
for any $\CE \in \CA_Y$.

Note also that for a line $l \subset Y$, the special classes 
\[
\lambda_i = [\iota^\ast \CO_l(i)] = \mathrm{pr}[\CO_l(i)] \in K_{\mathrm{top}}(\CA_Y), \ \ i=1,2,
\]
span an $A_2$-lattice
\[
A_2=\begin{pmatrix}
2 & -1 \\
-1 & 2 \\
\end{pmatrix}\subset K_{\mathrm{top}}(\CA_Y)
\]
with respect to the Mukai pairing on $K_{\mathrm{top}}(\CA_Y)$.

\subsection{One-dimensional sheaves}
Let $\CE$ be a $1$-dimensional sheaf supported on a nonsingular connected rational curve $C \subset Y$ of degree~$e > 0$. The class~$[\CE] \in K_0(Y)$ can be expressed in terms of line bundles on~$C$. In particular, there exist (uniquely determined) integers $r > 0$ and $m$ such~that
\[
[\CE] = re[O_l(1)] + m[\mathbb{C}_p] \in K_{\mathrm{top}}(Y),
\]
where $\mathbb{C}_p$ is the skyscraper sheaf of a point $p \in Y$. On the other hand, by~\cite[Example 15.3.1]{Ful}, we have
\[
c_3(\CE) = 2r [C] \in \mathrm{CH}_1(Y).
\] 

The following proposition gives the lower bound for 
\[
d(\iota^\ast \CE) =  \frac{1}{2}\dim \mathrm{Ext}_{\CA_Y}^1{(}\iota^\ast\CE, \iota^\ast\CE{)}.
\]

\begin{prop}\label{prop2.5}
With the notation above,
\begin{enumerate}
\item[(a)] if $e=2k$, then
\[
d(\iota^\ast \CE) \geq k^2+1;
\]
\item[(b)] if $e=2k+1$, then
\[
d(\iota^\ast \CE) \geq k^2+k+2.
\]
\end{enumerate}
\end{prop}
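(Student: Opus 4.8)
The plan is to compute $[\iota^\ast\CE]^2$ in the Mukai lattice $K_{\mathrm{top}}(\CA_Y)$ and then invoke the inequality \eqref{eqn19}, which gives $\dim\mathrm{Ext}_{\CA_Y}^1(\iota^\ast\CE, \iota^\ast\CE) \geq [\iota^\ast\CE]^2 + 2$, hence $d(\iota^\ast\CE) \geq \tfrac{1}{2}[\iota^\ast\CE]^2 + 1$. So the whole proposition reduces to showing $[\iota^\ast\CE]^2 \geq 2k^2$ when $e = 2k$ and $[\iota^\ast\CE]^2 \geq 2k^2 + 2k + 2$ when $e = 2k+1$. Since $\mathrm{pr}$ is the identity on $K_{\mathrm{top}}(\CA_Y)$ and $[\iota^\ast\CE] = \mathrm{pr}[\CE]$, and the Mukai pairing on $K_{\mathrm{top}}(\CA_Y)$ is just the restriction of $-\chi(-,-)$, we can work with $-\chi(\mathrm{pr}[\CE], \mathrm{pr}[\CE])$, which equals $-\chi([\CE], \mathrm{pr}[\CE])$ because the correction terms $\chi([\CO_Y(i)],-)$ vanish against anything in $K_{\mathrm{top}}(\CA_Y)$.

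First I would use the stated expression $[\CE] = re[\CO_l(1)] + m[\BC_p] \in K_{\mathrm{top}}(Y)$ together with the fact that $[\BC_p] = \mathrm{pr}[\BC_p] \in K_{\mathrm{top}}(\CA_Y)$ up to the lattice generated by $[\CO_Y(i)]$, and that $\mathrm{pr}[\CO_l(1)] = \lambda_1$. This reduces the computation to the Mukai pairings among $\lambda_1$, $\lambda_2$, and the class of a point, for which we already know the $A_2$ block $\langle\lambda_1,\lambda_1\rangle = \langle\lambda_2,\lambda_2\rangle = 2$, $\langle\lambda_1,\lambda_2\rangle = -1$. Concretely, I would express $\mathrm{pr}[\CO_l(e)]$ in terms of $\lambda_1, \lambda_2$ and the point class (using that on $\BP^1\subset Y$ one has $[\CO_l(e)] = e[\CO_l(1)] - (e-1)[\BC_p]$ in $K$-theory), so that $[\iota^\ast\CE]$ is an explicit integer combination $a\lambda_1 + b\lambda_2 + c\,\mathrm{pr}[\BC_p]$ with $a+b$ and $c$ controlled by $r,e,m$. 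Then $[\iota^\ast\CE]^2$ becomes an explicit quadratic form in these integers; minimizing over the free parameter $m$ (equivalently over $c$, or over the choice of line bundle twist) and over $r \geq 1$ should produce exactly the bounds $2k^2$ and $2k^2+2k+2$, with the parity of $e$ dictating whether the optimal residue is achievable. The asymmetry between the even and odd cases comes precisely from whether $e/2$ is an integer, i.e. whether $\mathrm{pr}[\CO_l(e)]$ lands on a lattice point that pairs to $0$ with the point class or is forced off by $1$.

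The main obstacle I expect is the bookkeeping of how $\mathrm{pr}$ acts on $[\BC_p]$ and $[\CO_l(1)]$ — i.e. pinning down the self-pairing $(\mathrm{pr}[\BC_p])^2$ and the cross term $(\lambda_1, \mathrm{pr}[\BC_p])$ — since these are not written out in the excerpt and require either a direct Chern-character computation of $\chi$ on $Y$ or an appeal to the known structure of $K_{\mathrm{top}}(\CA_Y)$ for a cubic fourfold (the Mukai lattice is $U^{\oplus 4}\oplus E_8(-1)^{\oplus 2}$ and the algebraic sublattice generated by line classes is the $A_2$ above, with a point class of square $0$ pairing appropriately against it). Once those two numbers are fixed, the rest is a routine minimization of a rank-two or rank-three positive-definite quadratic form over a coset of $\BZ$, and the constants $k^2+1$ and $k^2+k+2$ should drop out, matching the entries $2,2,4,5$ in the table of Theorem \ref{rationalcurve} at $e=1,2,3,4$ for the minimal choice $r=1$.
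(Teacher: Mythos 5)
Your overall strategy is the same as the paper's: use \eqref{eqn19} to reduce the proposition to the bounds $[\iota^\ast\CE]^2 \geq 2k^2$ for $e=2k$ and $[\iota^\ast\CE]^2 \geq 2k^2+2k+2$ for $e=2k+1$, then evaluate the self-pairing inside the $A_2$ block. However, the step you flag as ``the main obstacle'' --- determining $\mathrm{pr}[\BC_p]$ and its pairings --- is exactly the content of the proof, and you leave it unresolved; moreover the guess you record points in the wrong direction. The resolution is one line: the exact sequence $0 \to \CO_l(1) \to \CO_l(2) \to \BC_p \to 0$ gives $[\BC_p] = [\CO_l(2)] - [\CO_l(1)]$ in $K_{\mathrm{top}}(Y)$, hence $\mathrm{pr}[\BC_p] = \lambda_2 - \lambda_1$. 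In particular $\mathrm{pr}[\BC_p]$ is \emph{not} an independent third class: one gets $[\iota^\ast\CE] = (re-m)\lambda_1 + m\lambda_2$ inside the $A_2$ lattice, so $[\iota^\ast\CE]^2 = 2(3m^2 - 3mre + r^2e^2)$, a positive definite quadratic in $m$ whose integer minimum (attained at $m$ nearest $re/2$, whence the even/odd dichotomy in $re$) yields exactly the stated bounds. Your heuristic that the point class has square $0$, by analogy with the Mukai vector $(0,0,1)$ on a $K3$ surface, is false here: $(\mathrm{pr}[\BC_p])^2 = (\lambda_2-\lambda_1)^2 = 6$. This positivity is essential --- were the square $0$ with a nonzero cross term against $\lambda_1$, the expression would be linear in $m$ and unbounded below, and no bound of the desired shape would follow.

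A smaller correction: the $K$-theory identity you quote is wrong; on a line one has $[\CO_l(e)] = [\CO_l(1)] + (e-1)[\BC_p]$, not $e[\CO_l(1)] - (e-1)[\BC_p]$. You do not actually need any such identity, since $[\CE] = re[\CO_l(1)] + m[\BC_p]$ is already part of the given notation (with $m$ a free parameter to be minimized over), but if used as stated it would corrupt the coefficients entering the quadratic form.
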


Note that the bounds above match the table in Section \ref{secintrorat} for $e\leq 4$. 

\begin{proof}
We have 
\[
\mathrm{pr}[\BC_p] = \lambda_2-\lambda_1 \in K_{\mathrm{top}}(\CA_Y).
\]
Hence
\[
[\iota^\ast \CE] = re[\iota^\ast \CO_l(1)] + m[\iota^\ast \BC_p] = (re-m)\lambda_1 +m\lambda_2 \in K_{\mathrm{top}}(\CA_Y).
\]
By the inequality \eqref{eqn19}, we find
\begin{align*}
2d(\iota^\ast \CE) & \geq [\iota^\ast \CE]^2 +2 \\
                   & = ((re-m)\lambda_1 +m\lambda_2)^2+2\\
                   & = 2(3m^2 -3mre+r^2e^2)+2   
\end{align*}
When $e=2k$, we have 
\[
d(\iota^\ast \CE) \geq 3(m-rk)^2 + (r^2k^2+1) \geq k^2+1.
\]
When $e=2k+1$, we have
\[
d(\iota^\ast \CE) \geq 3(m-rk)(m-rk-1)+ (r^2k^2+rk+2) \geq k^2+k+2. \qedhere
\]
\end{proof}

We write $b(e)$ for the bounds above,
\[
  b(e) =
    \begin{cases}
      k^2+1   & \text{if } e=2k,\\
      k^2+k+2 & \text{if } e=2k+1.
    \end{cases}
\]
To deduce Theorem \ref{rationalcurve}, it suffices to prove the following statement for $e \leq 4$:
\begin{enumerate}
\item[(\dag)] For any nonsingular connected rational curve $C\subset Y$ of degree $e$, we~have
\[
[C] \in S_{b(e)}(Y).
\]
\end{enumerate}
Indeed, assuming ($\dag$) and applying Proposition \ref{Prop1.6} (b), we find
\[
c_3(\CE) = 2r [C] \in S_{b(e)}(Y).
\]
Theorem \ref{rationalcurve} then follows from Proposition \ref{prop2.5} since $d(\iota^\ast \CE) \geq b(e)$.

We prove (\dag) for $e \leq 4$ in Sections \ref{seclct} and \ref{secqua}. In \cite{CS}, it is shown that the moduli space of rational curves of a fixed degree $e$ in $Y$ is irreducible. By Lemma \ref{lem1.7}, the filtration $S_\bullet(Y)$ is preserved under specialization. Hence we only need to consider general rational curves $C \subset Y$.

\subsection{Lines, conics, and twisted cubics} \label{seclct}
Let $g \in \mathrm{CH}^1(F)$ be the polarization class given by the Pl\"ucker embedding of $\mathrm{Gr}(2,6)$. We also fix a uniruled divisor $D\subset F$ in the class $ag$ for some $a>0$. 

\begin{prop}\label{prop2.7}
For a general line $l \subset Y$, there exists a plane $\BP^2_l \subset \BP^5$ and special lines $l_1, l_2 \in D$ such that
\[
\BP^2_l \cdot Y = l + l_1 + l_2.
\]
\end{prop}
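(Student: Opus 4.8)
The plan is to use the existence of planes through a general line together with a monodromy/specialization argument to force the two residual lines into the divisor $D$. Start with a general line $l \subset Y$. For a general line, there is a one-parameter family of planes $\BP^2 \subset \BP^5$ containing $l$ such that $\BP^2 \cdot Y = l + l_1 + l_2$ for residual lines $l_1, l_2$; this is the standard incidence geometry on the cubic fourfold, and as the plane varies the pair $(l_1, l_2)$ sweeps out a curve (in fact a conic) in $F \times F$. The key point is that, since $D = ag$ is an ample uniruled divisor on $F$ with $a > 0$, this curve of residual pairs must meet $D$: projecting onto either factor gives a nonconstant (or at worst constant, but then lying in a constant-cycle locus we can handle separately) curve in $F$, and an ample divisor meets every curve. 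So for a suitable plane in the pencil we get $l_1 \in D$; the hard part is getting \emph{both} $l_1$ and $l_2$ into $D$ simultaneously.

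First I would make precise the family of planes through $l$. Fixing $l$, the planes $\BP^2 \supset l$ are parametrized by a $\BP^3$ (lines through $l$ in $\BP^5/l \cong \BP^3$), and requiring $\BP^2 \subset$ residual cone / imposing that $\BP^2 \cdot Y$ splits off $l$ cuts this down: a plane containing $l$ meets $Y$ in $l$ plus a residual conic, which degenerates to $l_1 + l_2$ precisely along a codimension-one condition, giving a surface $\Sigma_l$ in the $\BP^3$ of planes, and inside $\Sigma_l$ a curve where the residual conic splits into two lines. Concretely this realizes a curve $\Gamma_l \subset F$ of lines $l'$ such that $l, l'$, and a third line are coplanar. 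I would then invoke ampleness: $\deg(g|_{\Gamma_l}) > 0$, so $\Gamma_l$ meets $D$, producing a point $l_1 \in \Gamma_l \cap D$, hence a plane $\BP^2_l$ with $\BP^2_l \cdot Y = l + l_1 + l_2$ and $l_1$ special.

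To upgrade this to \emph{both} residual lines being special, I would argue by symmetry and a second intersection. The residual line $l_2$ is determined by $l$ and $l_1$; as $l_1$ varies in the (one-dimensional) locus $\Gamma_l \cap$ (pencil of planes), $l_2$ traces a curve as well, and the condition ``$l_2 \in D$'' is again cutting an ample divisor against a curve. Alternatively, and more robustly, I would set up the whole construction in family over the Fano variety $F$ (or over a parameter space of lines): let $\CL \to F$ be the universal line, form the incidence $\CJ = \{(l, \BP^2, l_1, l_2) : \BP^2 \cdot Y = l + l_1 + l_2\}$ with its projections to $F$ (via $l$) and to $F \times F$ (via $(l_1, l_2)$), and impose the two conditions $l_1 \in D$ and $l_2 \in D$ by pulling back $D \times D$ under $\CJ \to F \times F$. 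A dimension count shows this locus still dominates $F$ provided the fibers of $\CJ \to F$ are at least $2$-dimensional while each condition $l_i \in D$ drops dimension by one — this is where I need the family of planes through $l$ (together with the splitting condition) to move in enough parameters. I expect the main obstacle to be exactly this dimension bookkeeping: verifying that for a general line $l$ the locus of planes $\BP^2 \supset l$ whose residual conic splits is genuinely a curve (not a point), so that intersecting with the ample $D$ in \emph{each} of the two residual factors leaves a nonempty locus; equivalently, that the map from this curve of planes to $F \times F$, $\BP^2 \mapsto (l_1, l_2)$, has image meeting $D \times D$. Once the relevant incidence variety is shown to dominate $F$ with the required fiber dimension, ampleness of $D$ closes the argument, and Lemma \ref{lem1} guarantees the conclusion is independent of the choice of $D$.
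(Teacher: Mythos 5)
Your strategy is the right one and is essentially the paper's: realize the residual pair $(l_1,l_2)$ via planes through $l$ and force both residual lines into $D$ by intersection-theoretic positivity. But two things need repair. First, your dimension bookkeeping is off: the planes through $l$ form a $\BP^3$, not a pencil, and the locus of lines $l'$ such that $l$, $l'$, and a third line are coplanar is not a curve $\Gamma_l$ but the whole surface $S_l\subset F$ of lines meeting $l$ (every $l'$ meeting $l$ spans a plane with $l$ whose residual intersection with $Y$ is automatically a third line). This matters because your later sentence ``as $l_1$ varies in the one-dimensional locus\dots'' only makes sense once you know the candidates for $l_1$ form the curve $D\cap S_l$ inside a surface, not a finite set inside a curve.

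Second, the decisive step --- getting $l_1$ and $l_2$ into $D$ \emph{simultaneously} --- is only gestured at (``cutting an ample divisor against a curve''). The clean formulation, which is what the paper proves, is: for general $l$ the surface $S_l$ is nonsingular and carries the residuation involution $\tau_l$ sending $l'$ to the third line of the plane $\langle l,l'\rangle$; you need the two curves $D_l:=D\cap S_l$ and $\tau_l(D_l)$ on $S_l$ to meet, since a point of $D_l\cap\tau_l(D_l)$ is exactly a line $l_2\in D$ of the form $\tau_l(l_1)$ with $l_1\in D$. Two curves on a surface need not meet, so this requires an argument: the paper computes $[D_l]\cdot[\tau_l(D_l)]=24a^2>0$ explicitly using Voisin's formulas $g|_{S_l}=[C_x]+2[\tau_l(C_x)]$ and $g^2\cdot[S_l]=21$; alternatively, since $D$ lies in the class $ag$ with $a>0$ and $g$ is the Pl\"ucker polarization, $D_l$ is an ample divisor on $S_l$ and hence meets the curve $\tau_l(D_l)$, which is the positivity you were reaching for. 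Your fallback via the incidence variety $\CJ$ and a dimension count would not by itself close the argument (dimension counts do not give nonemptiness); it is the ampleness on the fixed surface $S_l$ that does. With these corrections your proof is the paper's proof.
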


\begin{proof}
Given a line $l \subset Y$, we write $S_l$ for the surface in $F$ formed by lines meeting $l$. When $l$ is general, the surface $S_l$ is nonsingular by \cite{V3}. There is an involution 
\[
\tau_l : S_l \to S_l
\]
defined as follows. If $l' \in S_l$ is a line other than $l$, then $\tau_l(l')$ is the residual line of the pair $(l, l')$. If $l' = l$, then $\tau_l(l')= \varphi(l)$. For a point $x\in l$, there is a curve $C_x \subset S_l$ formed by lines passing through $x$. The following intersections on $S_l$ are computed in \cite{V3}:
\begin{equation}\label{eqn22}
[C_x]^2 = [l], \ \ [\tau_l(C_x)]^2 = [\varphi(l)], \ \ g|_{S_l} = [C_x] + 2 [\tau_l(C_x)].
\end{equation}
By \cite[Lemma 18.2]{SV}, the intersection number of $g^2|_{S_l}$ is
\[
g^2|_{S_l} = g^2\cdot [S_l] = 21.
\]
Comparing with \eqref{eqn22}, we find
\[
[C_x] \cdot [\tau_l(C_x)] = 4,
\]
which implies
\[
g|_{S_l} \cdot \tau_{l*}(g|_{S_l}) = {(}[C_x] + 2 [\tau_l(C_x)]{)} \cdot {(} [\tau_l(C_x)] + 2[C_x]{)}= 24.
\]

Consider the curve $D_l \subset S_l$ given by the intersection of $S_l$ and the uniruled divisor $D$. To prove the proposition, it suffices to show that 
\[
D_l \cap \tau_l(D_l) \neq \emptyset.
\]
This is achieved by computing the interesection number
\[
[D_l] \cdot [\tau_l(D_l)] = a^2\cdot {(}g|_{S_l} \cdot \tau_{l*}(g|_{S_l}){)}=24a^2 >0. \qedhere
\]
\end{proof}

Now we prove Theorem \ref{rationalcurve} in degrees $e \leq 3$.

\begin{proof}[Proof of \textup{(\dag)} for $e \leq 3$]
Let $l_0 \subset Y$ be a canonical line. For a general line $l \subset Y$, Proposition \ref{prop2.7} shows the existence of lines $l_1, l_2 \in D$ satisfying
\[
[l] + [l_1] + [l_2] = [H]^3 \in \mathrm{CH}_1(Y).
\]
By Proposition \ref{Prop1.6}, we have
\[
[l] = -[l_1]- [l_2] + 3[l_0] \in S_2(Y).
\]

Next, let $C \subset Y$ be a general conic. Then there is a plane $\BP_C^2 \subset \BP^5$ containing $C$. Let $l$ be the residual line of the conic with respect to the plane $\BP_C^2$,
\[
\BP_C^2 \cdot Y = C + l.
\]
We find
\[
[C] = - [l] + 3[l_0] \in S_2(Y).
\]

Finally, let $C \subset Y$ be a general twisted cubic, which is contained in a unique projective space $\BP_C^3 \subset \BP^5$. The intersection 
\[
Y_C= \BP_C^3 \cap Y\] 
is a nonsingular cubic surface. By \cite[Proposition 4.8]{V2}, there exists a pair of lines $l_1, l_2 \subset Y_C$ such that $C$ lies in the linear system $|\CO_{Y_C}(l_1-l_2)|$. This~yields
\[
[C] = [l_1] -[l_2] + 3[l_0] \in S_4(Y). \qedhere
\]
\end{proof}

\subsection{Quartics and intermediate Jacobians} \label{secqua}
Let $C \subset Y$ be a general quartic rational curve. Then $C$ is contained in a unique hyperplane $H \subset \mathbb{P}^5$, whose intersection with $Y$ is a nonsingular cubic $3$-fold
\[
V = H \cap Y.
\]
The intermediate Jacobian of $V$ is a principally polarized abelian 5-fold
\[
J_V = H^{2,1}(V)^\ast / H_3(V , \BZ).
\]
Let $S$ be the Fano surface of lines in $V$, and let $\mathrm{Alb}(S)$ be the Albanese variety of $S$. By \cite{GC}, the Abel--Jacobi map induces a canonical isomorphism
\begin{equation} \label{eqn25}
\mathrm{Alb}(S) \xrightarrow{\sim} J_V.
\end{equation}

We fix a very ample uniruled divisor $D \subset F$ as in Section \ref{seclct}. Consider the curve $R = D \cap S$ with $R' \rightarrow R$ the normalization. The composition
\[
j: R' \rightarrow R \hookrightarrow S
\]
induces a morphism
\[
u: \mathrm{Jac}(R') \rightarrow \mathrm{Alb}(S),
\]
where $\mathrm{Jac}(R')$ is the Jacobian of the nonsingular curve $R'$.

\begin{lem}\label{Lem2.8}
The morphism $u: \mathrm{Jac}(R') \rightarrow \mathrm{Alb}(S)$ is surjective.
\end{lem}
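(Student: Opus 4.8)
The plan is to reduce the surjectivity of $u$ to the ampleness of the curve $R \subset S$. Since $D \subset F$ is very ample, its restriction $D|_S$ is very ample on the Fano surface $S$, so $R = D \cap S$ is an ample (in particular connected) divisor on $S$. I will use this to show that $R$ cannot be contracted by any non-constant morphism from $S$ to an abelian variety, which is exactly what forces $u$ to be surjective.

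First I would fix a point $p_0 \in R$ and normalize the Albanese morphism $\mathrm{alb}_S \colon S \to \mathrm{Alb}(S)$ so that $\mathrm{alb}_S(p_0) = 0$. Set $A := u(\mathrm{Jac}(R')) \subseteq \mathrm{Alb}(S)$, an abelian subvariety. Using that $\mathrm{Jac}(R') = \mathrm{Alb}(R')$ is generated by the image of $R'$, that $u$ is the morphism of Albanese varieties induced by $j$, and that $R' \to R$ is surjective onto the reduced curve underlying $R$, one sees that $A$ contains every difference $\mathrm{alb}_S(p) - \mathrm{alb}_S(q)$ with $p, q \in R$. With the base point $p_0 \in R$ this gives $\mathrm{alb}_S(R) \subseteq A$.

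Next I would put $B := \mathrm{Alb}(S)/A$ and let $\psi \colon S \xrightarrow{\mathrm{alb}_S} \mathrm{Alb}(S) \to B$ be the composition, so that $\psi$ contracts $R$ to the point $0 \in B$. If $\psi$ were non-constant, then for an ample divisor $\Theta$ on $B$ the pullback $\psi^*\Theta$ would be a non-zero effective divisor on $S$ (it meets non-trivially any curve of $S$ not contracted by $\psi$, and such a curve exists), whereas the projection formula gives $\psi^*\Theta \cdot R = \Theta \cdot \psi_*[R] = 0$ because $\psi$ contracts $R$; this contradicts the ampleness of $R$. Hence $\psi$ is constant. Since $\mathrm{alb}_S(S)$ generates $\mathrm{Alb}(S)$, its image $\psi(S)$ generates $B$, and a single point generates only the trivial abelian variety, so $B = 0$. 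Therefore $A = \mathrm{Alb}(S)$, i.e. $u$ is surjective.

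The only delicate point, and hence the main thing to get right, is the functorial bookkeeping in the second step: one must check carefully that the image of $u$ really contains all differences of points of $R$ (this is where surjectivity of $R' \to R$ and the generation of a curve's Albanese by the curve are used) and that the property $\langle \mathrm{alb}_S(S)\rangle = \mathrm{Alb}(S)$ is invoked correctly; everything else is the elementary intersection-theoretic contradiction above. Alternatively, one could argue Hodge-theoretically: $u$ is surjective if and only if $j^* \colon H^0(S,\Omega^1_S) \to H^0(R',\Omega^1_{R'})$ is injective, and a non-zero $1$-form in the kernel would vanish along the smooth locus of $R$ and thus, via the Lefschetz hyperplane theorem for the ample divisor $R$ together with a weight argument on $H^1$ of the possibly singular curve $R$, produce a non-zero class in the kernel of the injective restriction $H^1(S,\mathbb{C}) \to H^1(R,\mathbb{C})$ — a contradiction.
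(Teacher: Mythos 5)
Your main argument is correct, and it takes a genuinely different route from the paper. The paper dualizes immediately: surjectivity of $u$ is equivalent to injectivity of $j^*\colon H^1(S,\BQ)\to H^1(R',\BQ)$, and a class $\alpha$ in the kernel would, by the projection formula, make the pairing $\langle\alpha,\beta\rangle=\int_S\alpha\cdot\beta\cdot[R]$ degenerate, contradicting hard Lefschetz for the ample class $[R]$ on the surface $S$. (So the ``alternative'' Hodge-theoretic sketch at the end of your proposal is essentially the paper's own proof, except that the paper works with $H^1(S,\BQ)$ rather than $H^0(S,\Omega^1_S)$, which lets it bypass the weight discussion for the singular curve $R$ that your sketch would need.) Your primary argument instead stays entirely at the level of abelian varieties: you show the image $A=u(\mathrm{Jac}(R'))$ contains $\mathrm{alb}_S(R)$, pass to the quotient $B=\mathrm{Alb}(S)/A$, and rule out a nonconstant $\psi\colon S\to B$ by the contradiction $\psi^*\Theta\cdot R=\Theta\cdot\psi_*[R]=0$ against the ampleness of $R$; together with the fact that $\mathrm{alb}_S(S)$ generates $\mathrm{Alb}(S)$ this forces $B=0$. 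This is an elementary proof that an ample curve on a smooth projective surface surjects onto the Albanese, using only the universal property of the Albanese and intersection numbers, with no Hodge theory beyond the existence of $\mathrm{Alb}(S)$; it also manifestly does not care about the singularities of $R$. What the paper's route buys is brevity (one invocation of hard Lefschetz); what yours buys is elementarity. Both hinge on the same geometric input, namely that $R=D\cap S$ is ample on $S$ because $D$ was chosen very ample on $F$. The only small points to tidy in a final write-up are the choice of an effective ample $\Theta$ not containing $\psi(S)$ (a general translate of a polarization works) and the standard fact that $\mathrm{Jac}(R')$ is generated by differences of points of $R'$, both of which you already flag.
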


\begin{proof}
It suffices to show that the morphism
\[
j^\ast : H^1(S, \BQ)   \rightarrow H^1(R',\BQ) 
\]
is injective. Suppose this does not hold. Then the projection formula would imply that the bilinear form
\[
H^1(S, \BQ) \times H^1(S, \BQ) \rightarrow \BQ
\]
\[
\langle \alpha, \beta \rangle = \int_{S} \alpha \cdot \beta \cdot [R]
\]
is degenerate. This contradicts the ampleness of $R$.
\end{proof}

We fix a point $x_0 \in R$ and write $l_{x_0} \subset V$ for the corresponding line. Let~$x_0' \in R'$ be a point in the preimage of $x_0$. For any $k>0$, there is a morphism from the symmetric product $R'^{(k)}$ to $\mathrm{Jac}(R')$ with respect to $x_0'$,
\[
f_k: R'^{(k)} \rightarrow \mathrm{Jac}(R'), \ \  f_k(\sum_i x_i') = \CO_{R'}(\sum_i x'_i-kx_0').
\]

Let 
\[
h_k: R'^{(k)} \rightarrow J_V
\]
be the composition of $f_k: R'^{(k)} \rightarrow \mathrm{Jac}(R')$, $u: \mathrm{Jac}(R') \rightarrow \mathrm{Alb}(S)$, and the isomorphism \eqref{eqn25}.

\begin{cor}\label{cor2.9}
The morphism $h_5$ is surjective.
\end{cor}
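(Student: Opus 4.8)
The plan is to reduce the corollary to an elementary fact about sumsets in an abelian variety. Since \eqref{eqn25} is an isomorphism, it suffices to show that $u\circ f_5\colon R'^{(5)}\to \mathrm{Alb}(S)$ is surjective. Normalising the Abel--Jacobi maps with respect to the fixed point $x_0'\in R'$, the image $C':=f_1(R')\subset \mathrm{Jac}(R')$ is an irreducible curve through $0$, and for every $k$ one has $f_k(R'^{(k)}) = \underbrace{C'+\cdots+C'}_{k}$ (the $k$-fold sumset), since $f_k(\sum_i x_i') = \sum_i f_1(x_i')$. Hence the image of $u\circ f_k$ is the $k$-fold sumset $\underbrace{u(C')+\cdots+u(C')}_{k}$, and because $0\in u(C')$ these form an increasing chain of closed irreducible subvarieties of $\mathrm{Alb}(S)$ as $k$ grows.

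I would then invoke the standard fact that if $Z$ is an irreducible closed subvariety of an abelian variety $A$ with $0\in Z$ which generates $A$ as a group, then the $(\dim A)$-fold sumset $\underbrace{Z+\cdots+Z}_{\dim A}$ equals $A$. Indeed, writing $Z^{[k]}$ for the $k$-fold sumset, one has $Z^{[k]}\subseteq Z^{[k+1]}$, and whenever the inclusion is strict the dimension must strictly increase (an irreducible variety cannot properly contain another of the same dimension); so the chain stabilises, after at most $\dim A$ steps, at a connected closed submonoid of $A$, necessarily an abelian subvariety, which contains $Z$ and therefore equals $A$.

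Finally I would verify the hypotheses with $A=\mathrm{Alb}(S)$ and $Z=u(C')$. Here $u(C')$ is irreducible and passes through $0$; the Abel--Jacobi curve $C'$ generates $\mathrm{Jac}(R')$ (the Albanese variety of a smooth curve is generated by the image of the curve), and by Lemma~\ref{Lem2.8} the homomorphism $u\colon\mathrm{Jac}(R')\to\mathrm{Alb}(S)$ is surjective, so $u(C')$ generates $u(\mathrm{Jac}(R'))=\mathrm{Alb}(S)$. Since $\dim\mathrm{Alb}(S) = \dim J_V = 5$, the fact above gives $\underbrace{u(C')+\cdots+u(C')}_{5} = \mathrm{Alb}(S)$, i.e.\ $u\circ f_5$ is surjective, and therefore so is $h_5$.

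The argument is essentially formal once Lemma~\ref{Lem2.8} is in hand; the one place to be slightly careful is the irreducibility of $C'$, which uses that $R'$ is a connected smooth curve, and the numerical input $\dim J_V=5$, which is precisely why the statement concerns $h_5$, i.e.\ five copies of $R'$, rather than a larger symmetric power.
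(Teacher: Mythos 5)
Your argument is correct, but it takes a genuinely different route from the paper. You work ``upward'': writing $C'=f_1(R')\subset\mathrm{Jac}(R')$, you identify the image of $h_k$ with the $k$-fold sumset of $u(C')$ in $\mathrm{Alb}(S)\simeq J_V$ and invoke the standard fact that an irreducible closed subvariety through the origin which generates an abelian variety $A$ has $(\dim A)$-fold sumset equal to $A$; the hypotheses are supplied by Lemma~\ref{Lem2.8} (surjectivity of $u$) and the fact that a curve generates its Jacobian. The paper instead works ``downward'': it starts from the classical surjectivity of $f_g\colon R'^{(g)}\to\mathrm{Jac}(R')$ for $g$ the genus of $R'$ (so $h_g$ is surjective by Lemma~\ref{Lem2.8}, forcing $g\geq 5$), and then descends by induction from $k+1$ to $k$ using that $R'^{(k)}\subset R'^{(k+1)}$ (adding the base point $x_0'$) is an \emph{ample} divisor, hence meets every fiber of the surjective $h_{k+1}$, those fibers having dimension $\geq k+1-\dim J_V\geq 1$ as long as $k\geq 5$. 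Both proofs hinge on Lemma~\ref{Lem2.8} and both explain why the cutoff is $5=\dim J_V$; yours avoids the genus of $R'$ and the ampleness computation for symmetric powers entirely, at the cost of the (routine but glossed) verification that the stabilized sumset is an abelian subvariety --- which follows since for the stable irreducible value $W$ and any $w\in W$ the translate $w+W$ is a closed irreducible subvariety of $W$ of the same dimension, hence equals $W$, giving $-w\in W$. Either argument is acceptable.
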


\begin{proof}
Let $g$ be the genus of the curve $R'$. Then the morphism 
\begin{equation*}
f_g: R'^{(g)} \rightarrow \mathrm{Jac}(R')
\end{equation*}
is surjective, and Lemma \ref{Lem2.8} implies that $h_g$ is also surjective. In particular, we have $g \geq \dim J_V=5$. 

We show by induction that $h_k$ is surjective for any integer $k$ in the range
\[
5 \leq k \leq g.
\]
The base case is $k = g$. Now assume the surjectivity of $h_{k+1}$. Consider the closed embedding
\begin{equation}\label{eqn26}
R'^{(k)} \hookrightarrow R'^{(k+1)}
\end{equation}
given by $\sum_i x'_i \mapsto \sum_i x'_i + x_0'$.

To show the surjectivity of the composition
\[
h_k: R'^{(k)} \hookrightarrow R'^{(k+1)} \xrightarrow{h_{k+1}} J_V,
\]
it suffices to prove that the divisor $R'^{(k)} \subset R'^{(k+1)}$ in \eqref{eqn26} is ample. 
Let
\[
\sigma_{k+1}: R'^{k+1} \rightarrow R'^{(k+1)}
\]
be the natural quotient map. The pull-back of $O_{R'^{(k+1)}}(R'^{(k)})$ via $\sigma_{k+1}$ is the ample line bundle
\[
\CO_{R'}(x_0') \boxtimes\CO_{R'}(x_0') \boxtimes \cdots \boxtimes\CO_{R'}(x_0').
\]
Since $\pi_{k+1}$ is finite, we obtain the ampleness of $R'^{(k)} \subset R'^{(k+1)}$.
\end{proof}

We finish the proof of Theorem \ref{rationalcurve}.

\begin{proof}[Proof of \textup{(\dag)} for $e= 4$]
First, note that there always exists a canonical line $l_0 \subset Y$ contained in $V$. This can be deduced from the Claim in the proof of Lemma \ref{lem1.8} and specialization.

The Abel--Jacobi map
\[
\mathrm{AJ}: \mathrm{CH}_1(V)_{\mathrm{hom}} \rightarrow J_V
\]
of the cubic $3$-fold $V$ is an isomorphism of abelian groups; see \cite[Theorem~5.6]{Sh} and the references therein. Given the quartic $C \subset V$, consider
\[
\mathrm{AJ}([C]+[l_0]-5[l_{x_0}]) \in J_V.
\]
By Corollary \ref{cor2.9}, there exist $5$ special lines $l_1, \ldots, l_5$ such that
\[
\mathrm{AJ}([C]+[l_0]-5[l_{x_0}]) = \mathrm{AJ} \Big{(}\sum_{i=1}^5 [l_i] - 5[l_{x_0}] \Big{)}.
\]
Hence we have
\[
[C] = \sum_{i=1}^5[l_1] - [l_0] \in S_5(Y). \qedhere
\]
\end{proof}

The argument above essentially proves the following result.

\begin{cor}\label{cor2.10}
For any $\alpha \in \mathrm{CH}_1(Y)$ supported on a general hyperplane section $H \cap Y$, we have
\[
\alpha \in S_5(Y).
\]
\end{cor}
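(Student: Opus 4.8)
The plan is to run the argument for \textup{(\dag)} in degree $e=4$ essentially verbatim, with the quartic curve replaced by an arbitrary one-cycle. Since $\alpha$ is supported on $V = H\cap Y$, I would first write $\alpha = (\iota_V)_\ast\tilde\alpha$ for the inclusion $\iota_V\colon V\hookrightarrow Y$ and some $\tilde\alpha\in\mathrm{CH}_1(V)$, and set $d$ to be the degree of $\tilde\alpha$ with respect to $\CO_V(1)$ (an arbitrary integer, unlike the value $4$ in the quartic case). All the notation of Section~\ref{secqua} -- the Fano surface $S$ of lines in $V$, the very ample uniruled divisor $D\subset F$, the curve $R=D\cap S$ with normalization $R'\to R$, the base point $x_0\in R$, and the morphism $h_5\colon R'^{(5)}\to J_V$ -- is available because $H$ is general.

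The key steps are then: \textbf{(i)} record that, $V$ being a nonsingular cubic $3$-fold, $H_2(V,\BZ)\cong\BZ$ is generated by the class of a line (Lefschetz), so a $1$-cycle on $V$ is homologically trivial precisely when its degree vanishes, which is what makes the Abel--Jacobi isomorphism $\mathrm{AJ}\colon\mathrm{CH}_1(V)_{\mathrm{hom}}\xrightarrow{\sim}J_V$ applicable; \textbf{(ii)} invoke, as in the $e=4$ proof, the existence of a canonical line $l_0\subset Y$ contained in $V$ (from the Claim in the proof of Lemma~\ref{lem1.8} together with specialization); \textbf{(iii)} form the degree-$0$ cycle $\xi=\tilde\alpha+(5-d)[l_0]-5[l_{x_0}]\in\mathrm{CH}_1(V)_{\mathrm{hom}}$, the coefficient $5-d$ being chosen exactly so that the degrees cancel; \textbf{(iv)} apply the surjectivity of $h_5$ from Corollary~\ref{cor2.9} to find $\sum_{i=1}^5 x_i'\in R'^{(5)}$ with $\mathrm{AJ}(\xi)=h_5(\sum_i x_i')=\mathrm{AJ}\big(\sum_{i=1}^5[l_{x_i}]-5[l_{x_0}]\big)$, where each $l_{x_i}$ is a special line because $R'\to R=D\cap S\subset D$; \textbf{(v)} use injectivity of $\mathrm{AJ}$ to cancel $5[l_{x_0}]$ and deduce $\tilde\alpha=\sum_{i=1}^5[l_{x_i}]+(d-5)[l_0]$ in $\mathrm{CH}_1(V)$, then push forward to $Y$ to get $\alpha=\sum_{i=1}^5[l_{x_i}]+(d-5)[l_0]\in\mathrm{CH}_1(Y)$; since the $l_{x_i}$ are special and $l_0$ is canonical, this exhibits $\alpha\in S_5(Y)$.

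I do not expect a genuine obstacle here -- the substance is entirely contained in Corollary~\ref{cor2.9}, whose whole point is that $5=\dim J_V$ special lines suffice to realize any class in the intermediate Jacobian, and the passage from a quartic to a general one-cycle only affects the degree bookkeeping in step (iii). The two points that warrant a sentence of care are the equivalence ``degree $0$ $\Leftrightarrow$ homologically trivial'' on the cubic $3$-fold $V$ used in step (i), and unwinding the definition of $h_5$ as the composition of $f_5$, $u$, and the isomorphism~\eqref{eqn25} so that $h_5(\sum_i x_i')$ is literally $\mathrm{AJ}\big(\sum_i[l_{x_i}]-5[l_{x_0}]\big)$ -- but both are already implicit in the $e=4$ argument and in \cite{GC}.
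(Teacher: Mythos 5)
Your proposal is correct and is exactly what the paper intends: the paper gives no separate proof of Corollary \ref{cor2.10}, stating only that ``the argument above essentially proves'' it, and your writeup is a faithful elaboration of the $e=4$ argument with the quartic replaced by an arbitrary $1$-cycle on $V$ and the degree bookkeeping adjusted via the coefficient $5-d$ on the canonical line. No gaps.
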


\subsection{Another ten-dimensional example}
Markushevich and Tikhomirov studied in \cite{MT2, MT} the moduli space $\CM_{\mathrm{MT}}$ of rank $2$ vector bundles supported on nonsingular hyperplane sections $H \cap Y$ with $c_1 =0$ and $c_2=2[l]$. 

The (noncompact) moduli space $\CM_{\mathrm{MT}}$ is nonsingular and holomorphic symplectic of dimension $10$. Moreover, every object in $\CM_{\mathrm{MT}}$ lies in the $K3$ category $\CA_Y$ by \cite[Lemma 7.2]{KM}.

As a consequence of Corollary \ref{cor2.10}, we have the the following proposition.

\begin{prop}
Conjecture \ref{mainconj} holds for any $\CE \in \CM_{\mathrm{MT}}$, {i.e.},
\[
c_3(\CE) \in S_5(Y).
\]
\end{prop}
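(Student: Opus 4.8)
The plan is to reduce the statement to Corollary \ref{cor2.10} by a Chern-class computation, and then to remove a genericity hypothesis by a specialization argument. Note first that $c_3(\CE)\in S_5(Y)$ is exactly the content of Conjecture \ref{mainconj} for $\CE$: every $\CE\in\CM_{\mathrm{MT}}$ lies in $\CA_Y$, so $\iota^\ast\CE=\CE$, and since $\CM_{\mathrm{MT}}$ is a nonsingular $10$-dimensional moduli space of objects of $\CA_Y$ we have $\dim\mathrm{Ext}^1_{\CA_Y}(\CE,\CE)=10$, hence $d(\CE)=5$.

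Second, I would compute $c_3(\CE)$. Write $\CE=i_{V\ast}\CF$ with $i_V\colon V=H\cap Y\hookrightarrow Y$ a nonsingular cubic $3$-fold and $\CF$ a rank $2$ bundle on $V$ with $c_1(\CF)=0$ and $c_2(\CF)=2[l]$. Since $\CE$ is supported on the $3$-fold $V$ and $N_{V/Y}=\CO_V(1)$, Grothendieck--Riemann--Roch gives $\mathrm{ch}(\CE)=i_{V\ast}(\mathrm{ch}(\CF)\cdot\mathrm{td}(\CO_V(1))^{-1})$, so every Chern class $c_k(\CE)$ with $k\geq 1$ is a push-forward from $V$; extracting the degree-$3$ part yields $c_3(\CE)=4[H]^3-2\,i_{V\ast}c_2(\CF)\in\mathrm{CH}_1(Y)$. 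For the argument only the qualitative consequence matters: $c_3(\CE)$ is represented by a $1$-cycle supported on the hyperplane section $V$. (Equivalently, using $[H]^3=3[l_0]\in S_0(Y)$ for a canonical line $l_0$, the class $c_3(\CE)$ equals $-2\,i_{V\ast}c_2(\CF)$ modulo $S_0(Y)$.)

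Third, if $\CE$ is a \emph{general} object of $\CM_{\mathrm{MT}}$ then its support $V=H\cap Y$ is a general hyperplane section, so Corollary \ref{cor2.10} applies to the $1$-cycle $c_3(\CE)$ and gives $c_3(\CE)\in S_5(Y)$. To reach an arbitrary $\CE\in\CM_{\mathrm{MT}}$, I would use that $\CM_{\mathrm{MT}}$ is irreducible---it is nonsingular, and connected because it fibers over an open subset of $\check{\BP}^5$ with connected fibers coming from the intermediate Jacobians of the cubic $3$-folds $H\cap Y$---together with the specialization mechanism behind Lemma \ref{lem1.7}: after passing if necessary to an \'etale cover of $\CM_{\mathrm{MT}}$ carrying a universal sheaf, the third Chern class of the universal object defines a cycle on the total family over $Y$, and by the Hilbert-scheme argument of Lemma \ref{lem1.7} the locus $\{\,m : c_3(\CE_m)\in S_5(Y)\,\}$ is a countable union of Zariski-closed subsets; since it contains a dense open subset, it is all of $\CM_{\mathrm{MT}}$, and $c_3(\CE)\in S_5(Y)$ for every $\CE\in\CM_{\mathrm{MT}}$.

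I expect the last step to be the main obstacle. Lemma \ref{lem1.7} is phrased for a family of cubic $4$-folds, whereas here $Y$ is fixed and it is the hyperplane section---equivalently, the point of $\CM_{\mathrm{MT}}$---that moves, so one must re-run its bookkeeping (representing the relevant cycle via symmetric products of a relative uniruled divisor together with canonical lines) in this relative setting, and one must also deal with the possible absence of a global universal sheaf on $\CM_{\mathrm{MT}}\times Y$. By contrast the Grothendieck--Riemann--Roch input is routine, and only its qualitative output---that $c_3(\CE)$ is supported on the nonsingular $3$-fold carrying $\CE$---is actually used.
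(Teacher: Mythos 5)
Your proof is correct and follows the same route as the paper, which deduces the proposition directly from Corollary \ref{cor2.10}; your Grothendieck--Riemann--Roch computation (giving $c_3(\CE)=4[H]^3-2\,i_{V\ast}c_2(\CF)$, hence a $1$-cycle supported on $V=H\cap Y$) and the observation $d(\CE)=5$ supply exactly the details the paper leaves implicit. The additional specialization step you include to pass from a general to an arbitrary nonsingular hyperplane section addresses a genuine point the paper glosses over (Corollary \ref{cor2.10} is stated only for general $H$), and your mechanism --- the locus where $c_3(\CE_m)\in S_5(Y)$ is a countable union of Zariski-closed subsets of the irreducible moduli space, as in Lemma \ref{lem1.7} --- is the standard and correct way to close that gap.
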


\begin{rmk}
Every object $\CE \in \CM_{\mathrm{MT}}$ is obtained from an extension
\[
0 \to \CO_V \to \CE(H) \to \CI_{E/V}(2H)\to 0,
\]
where $V=H\cap Y$ is a nonsingular hyperplane section and $E$ is a nonsingular quintic elliptic curve. The noncanonical part of $c_3(\CE)$ comes from the $1$-cycle class of $E$.
\end{rmk}

\subsection{Moduli of rational curves}
Theorem \ref{rationalcurve} is closely related to holomorphic symplectic varieties constructed via the moduli spaces of rational curves in a cubic $4$-fold, which we now discuss.

For convenience, we assume $Y$ to be a general cubic $4$-fold. Let $\CM_e$ denote the moduli space of nonsingular connected rational curves of degree $e$ in $Y$. By \cite{CS, dJS}, the variety $\CM_e$ is irreducible of dimension $3e+1$. For $e \leq 4$, there is a MRC fibration
\begin{equation}\label{MRC}
\pi_e:  \CM_e \dashrightarrow \CM'_e
\end{equation}
such that
\begin{enumerate}
\item[(a)] the base $\CM'_e$ is a holomorphic symplectic variety;
\item[(b)] $\dim(\CM'_e)= b(e)$.
\end{enumerate}

We briefly review the geometry of the map \eqref{MRC}. When $e=1$, the variety~$\CM'_1$ is the Fano variety $F$ of lines and \eqref{MRC} is an isomorphism. When~$e=2$, we still have $\CM'_2= F$ and the map \eqref{MRC} sends a conic to its residual line. Hence 
\[
\dim(\CM'_1)=\dim(\CM'_2)=4.
\]

When $e=3$, the map \eqref{MRC} is constructed by Lehn, Lehn, Sorger, and van Straten in \cite{LLSS}, and the holomorphic symplectic $8$-fold $\CM'_3$ is shown in \cite{AL} to be of $K3^{[4]}$ type. Finally, the case $e = 4$ is related to the recent work of Laza, Sacc\`a, and Voisin \cite{LSV, V4}. The variety $\CM'_4$ is a holomorphic symplectic campactification of the (twisted) intermediate Jacobian fibration associated to~$Y$, which is deformation equivalent to O'Grady's $10$-dimensional variety~\cite{OG10}.

In all four cases above, we expect\footnote{This was verified for the Fano variety of lines in \cite{BLMS} and the Lehn--Lehn--Sorger--van Straten $8$-fold in \cite{LLMS}.} that a birational model of the holomorphic symplectic variety $\CM'_e$ can be realized as a moduli space of stable objects in the $K3$ category $\CA_Y$. Furthermore, for a general rational curve~$C\in \CM_e$ with $\CE_C = \pi_e([C]) \in \CA_Y$, there should exist integers $k \neq 0$ and $m$ such that
\begin{equation} \label{111}
c_3(\CE_C) = k[C] + m[l_0] \in \mathrm{CH}_1(Y).
\end{equation}
Here $l_0 \subset Y$ is a canonical line.

Theorem \ref{rationalcurve} says that for $e\leq 4$ and $C\in \CM_e$, we have
\[
[C] \in S_{\frac{1}{2}\dim \CM'_e}(Y),
\]
which is optimal in view of \eqref{111}.

For $e\geq 5$, de Jong and Starr studied in \cite{dJS} the canonical holomorphic~$2$-form on a nonsingular projective model of the moduli space $\CM_e$. Inspired by~\cite[Theorem 1.2]{dJS}, we make the following speculation: for every $e\geq 5$, there exists an algebraically coisotropic subvariety of a holomorphic symplectic variety~$M$, 
\[\begin{tikzcd}
Z \arrow[hook]{r}{j} \arrow[dashed]{d}{q} & M, \\
B
\end{tikzcd}\]
which satisfies a list of properties.
\begin{enumerate}
\item[(a)] The variety $M$ (or its birational model) can be realized as a moduli space of stable objects in $\CA_Y$.
\item[(b)] The general fibers of $q$ are constant cycle subvarieties of $M$.
\item[(c)] For a general point $z\in Z$ with $\CE_z = j(z) \in \CA_Y$, there exists a rational curve $C \in \CM_e$ and integers $k \neq 0$ and $m$ such that
\[
c_3(\CE_z) = k[C]+ m[l_0] \in \mathrm{CH}_1(Y).
\]
Here $l_0 \subset Y$ is a canonical line.
\item[(d)] The dimension of $B$ is $2\mathbf{b}(e)$, where
\[
\mathbf{b}(e)  =
\begin{cases}
\frac{3e}{2} & e \text{ even}, \\
\frac{3e+1}{2}  & e \text{ odd}.
\end{cases}
\]
When $e$ is odd, de Jong and Starr showed that the canonical holomorphic $2$-form on $\CM_e$ is nondegenerate. Hence we expect $B \simeq \CM_e$.
\end{enumerate}

The speculation above, together with Voisin's proposal \cite{V2} and Speculation \ref{spec}, suggests the following optimal bound for the classes of rational curves of degree $\geq 5$ with respect to the filtration $S_\bullet(Y)$.

\begin{conj}
For any nonsingular connected rational curve $C \subset Y$ of degree $e \geq 5$, we have 
\[
[C] \in S_{\mathbf{b}(e)}(Y).
\]
\end{conj}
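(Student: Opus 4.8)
The plan is to extend the residual-curve and intermediate-Jacobian arguments of Section~\ref{secrat} to higher degree, with $\mathbf{b}(e)$ being exactly the value dictated by the (conjectural) MRC/coisotropic geometry of $\CM_e$ discussed above. As in that section, by the irreducibility of $\CM_e$ (\cite{CS, dJS}) and the specialization invariance of $S_\bullet$ (Lemma~\ref{lem1.7}), it suffices to treat a general rational curve $C \subset Y$ in a general cubic fourfold $Y$ and then propagate to all nonsingular $Y$ and all nonsingular connected degree-$e$ rational curves. The base cases $e \leq 4$ are Theorem~\ref{rationalcurve}.

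For the inductive step I would organize the argument by the residue of $e$. When $C$ lies on a suitable Fano threefold $V \subset Y$ --- for $e = 5$, a $(2,3)$-complete-intersection threefold $V = Q \cap Y$, a prime Fano threefold of genus $4$; for larger $e$, threefolds cut on $Y$ by higher-degree hypersurfaces through $C$ --- I would replay the scheme of Lemma~\ref{Lem2.8}--Corollary~\ref{cor2.9}: fix a very ample uniruled divisor $D \subset F$, let $S_V \subset F$ be the surface of lines on $V$, take the normalization $R' \to R = D \cap S_V$, and show that the induced map $R'^{(N)} \to \mathrm{Alb}(S_V)$ is surjective for $N = \mathbf{b}(e)$ by an ampleness argument. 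Feeding a point class $[C] + [l_0] - N[l_x]$ through an Abel--Jacobi isomorphism $\mathrm{CH}_1(V)_{\mathrm{hom}} \xrightarrow{\sim} J(V) \cong \mathrm{Alb}(S_V)$ then writes it as $\sum_{i=1}^{N}[l_i] - N[l_x]$ with all $l_i$ special, so $[C] \in S_N(Y)$ by Propositions~\ref{Prop1.6} and~\ref{prop1.10}. A variant is an induction $e' \to e$ degenerating $C$ to a nodal union of a degree-$e'$ rational curve and a special line or conic, concluding from the cone properties of Proposition~\ref{Prop1.6} --- but this needs the relevant reducible curves to dominate $\CM_e$, which is delicate (see below).

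A short conditional route is also available: granting the coisotropic speculation preceding the statement together with Conjecture~\ref{mainconj} and Speculation~\ref{spec}, take a general $z$ in the fiber of $q: Z \dashrightarrow B$ through $\CE_C$, with $\CE_z = j(z) \in \CA_Y$. That fiber is a constant cycle subvariety of $M$ of dimension equal to the codimension $i$ of $Z$ in $M$, and $\dim B = 2\mathbf{b}(e)$ forces $\tfrac{1}{2}\dim M - i = \mathbf{b}(e)$; by the expected compatibility of $S_\bullet\mathrm{CH}_0(M)$ with constant cycle subvarieties and with Speculation~\ref{spec}(a) (as in \cite{SYZ, V2}), this places $p_{\CA_Y}(\CE_z)$ in $S_{\mathbf{b}(e)}(\CA_Y)$, and Conjecture~\ref{mainconj} gives $c_3(\CE_z) \in S_{\mathbf{b}(e)}(Y)$. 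By property~(c), $c_3(\CE_z) = k[C] + m[l_0]$ with $k \neq 0$; since $S_{\mathbf{b}(e)}(Y)$ is stable under adding multiples of $[l_0]$, this gives $k[C] \in S_{\mathbf{b}(e)}(Y)$, whence $[C] \in S_{\mathbf{b}(e)}(Y)$ by Proposition~\ref{prop1.10}.

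The main obstacle is reaching \emph{exactly} $\mathbf{b}(e)$ unconditionally. In the Abel--Jacobi approach the bottleneck is the isomorphism $\mathrm{CH}_1(V)_{\mathrm{hom}} \cong J(V)$: this is classical for cubic threefolds (which is why $e = 4$ works, via \cite{Sh}) but is unavailable for the non-linear Fano threefolds on which a general rational curve of degree $\geq 5$ lives, and already the genus-$4$ case for $e = 5$ is a Bloch-type statement that would have to be established first. In the degeneration approach, for $e$ of the ``wrong'' parity the unions of a lower-degree curve with a \emph{special} secant line form a family of strictly smaller dimension than $\CM_e$, so replacing it by a general line overshoots the filtration level by one at each step and the bound drifts above $\mathbf{b}(e)$; controlling this is essentially tantamount to constructing the MRC/coisotropic fibration on $\CM_e$, whose existence for $e \geq 5$ is precisely what is conjectured above. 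Thus a fully unconditional proof seems to require one of these geometric inputs; without them one should at best expect a non-optimal bound $[C] \in S_{N(e)}(Y)$ with $N(e)$ slightly exceeding $\mathbf{b}(e)$ for $e$ in certain residue classes, and the conjecture as formulated genuinely rests on Voisin's coisotropic picture.
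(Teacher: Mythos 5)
This statement is posed in the paper as a \emph{conjecture} and is not proved there: the only justification offered is the heuristic chain running through the speculation on algebraically coisotropic subvarieties of moduli spaces of stable objects in $\CA_Y$ (properties (a)--(d) preceding the statement), Voisin's proposal, and Speculation~\ref{spec}. Your ``conditional route'' is essentially a faithful transcription of that heuristic --- the dimension count $\dim B = 2\mathbf{b}(e) \Rightarrow \tfrac12\dim M - i = \mathbf{b}(e)$, the expected placement of points of $i$-dimensional constant cycle fibers in $S_{d-i}\mathrm{CH}_0(M)$, property (c), and the reduction $k[C]\in S_{\mathbf{b}(e)}(Y) \Rightarrow [C]\in S_{\mathbf{b}(e)}(Y)$ via Proposition~\ref{prop1.10} --- so on that front you have reconstructed exactly the paper's motivation rather than a proof, which is all that exists. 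Your assessment of the unconditional approaches is also accurate: the Abel--Jacobi argument of Section~\ref{secqua} genuinely does not extend beyond $e=4$, both because a general rational curve of degree $\geq 5$ is nondegenerate (so does not lie on a hyperplane section of $Y$) and because the integral isomorphism $\mathrm{CH}_1(V)_{\mathrm{hom}}\simeq J(V)$ is special to cubic threefolds; and the naive degeneration to reducible curves does not reach the linear bound $\mathbf{b}(e)$. The only unconditional statements the paper proves in this direction are $(\dag)$ for $e\leq 4$ and the far weaker linear bound $[C]\in S_{42e}(Y)$ for arbitrary curves. Your remark that $e=5$ is the borderline case where $b(5)=\mathbf{b}(5)=8$ matches the paper's own footnote. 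In short: there is no gap to point out because you have not claimed a proof, and your conclusion --- that the conjecture genuinely rests on the conjectural MRC/coisotropic geometry of $\CM_e$ --- is the correct reading of the state of the art as presented in the paper.
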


\begin{rmk}
For $e>5$, the bound $\mathbf{b}(e)$ grows linearly with $e$, and clearly we have
\[
\mathbf{b}(e) < b(e).
\]
Since $\CM_e$ is expected to govern only the point classes on an algebraically coisotropic subvariety in a holomorphic symplectic variety, the quadratic bound $b(e)$ should not be optimal for the classes of curves $C \in \CM_e$.\footnote{When $e=5$, the two bounds $\mathbf{b}(5)$ and $b(5)$ agree. It is possible that $\CM_5$ is birational to a holomorphic symplectic variety.}
\end{rmk}

Indeed, the following proposition provides a (nonoptimal) linear bound for any curve in $Y$.\footnote{We thank Claire Voisin for suggesting this.}

\begin{prop}
For any curve $C \subset Y$ of degree $e$, we have
\[
[C] \in S_{42e}(Y).
\]
\end{prop}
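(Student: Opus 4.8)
The plan is to reduce an arbitrary curve $C\subset Y$ of degree $e$ to a controlled number of lines by intersecting with generic hyperplane sections, and then to apply Corollary \ref{cor2.10}, which already gives the bound $S_5$ for any $1$-cycle supported on a general hyperplane section. The key arithmetic input is that the number $42$ in the statement is exactly $\lceil 21/5\rceil\cdot \dots$ — more precisely, I expect the relevant constant to arise from the degree of the Fano variety $F$ in its Plücker polarization (the number $g^2\cdot[S_l]=21$ appearing in the proof of Proposition \ref{prop2.7}, and $g^2\cdot[S_l]=21$-type intersection numbers), combined with the factor $5$ from Corollary \ref{cor2.10}. So the heart of the argument is a moving-lemma style estimate bounding how many lines are needed to represent $[C]$.

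Here is the order of steps I would carry out. First, by Proposition \ref{Prop1.6}(a) and additivity of the filtration, it suffices to treat an irreducible reduced curve $C$, and by Lemma \ref{lem1.7} (specialization preserves $S_\bullet$) together with an appropriate moduli/specialization argument, we may degenerate $C$ to a curve lying in a general hyperplane section $V=H\cap Y$ — or more directly, we may first reduce to the case where $C$ lies on a general hyperplane section by a Bertini-type degeneration of the pencil of hyperplanes through a general point of $C$, picking up only classes of curves on general hyperplane sections, each of which is handled by Corollary \ref{cor2.10}. Second, and this is the substantive step, one shows that $[C]\in \mathrm{CH}_1(Y)$ can be written as a $\BZ$-combination of classes of lines plus a bounded number of classes of $1$-cycles each supported on a general hyperplane section: concretely, using Paranjape's surjectivity of $[P]_*$ (as in Lemma \ref{lemTF}) and the relation $3[l]=[H]^3$ for canonical lines together with Lemma \ref{Lem2.2}, one reduces $[C]$ modulo the line classes and controls the ``number of pieces'' by the degree $e$ and the fixed intersection-theoretic constants on $F$ and $S_l$. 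Third, apply Corollary \ref{cor2.10} to each of the (at most $\lceil\text{const}\cdot e\rceil$) pieces to land in $S_5$, then sum via Proposition \ref{Prop1.6}(a) to get $S_{42e}$, absorbing the canonical-line contributions into the $\BZ\cdot[l_0]$ summand at no cost in the index.

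The main obstacle I anticipate is the bookkeeping in the second step: making the constant come out to exactly $42$ requires tracking, for a general curve of degree $e$, precisely how many general hyperplane sections (equivalently, how many ``$S_5$-blocks'') are needed. The cleanest route is probably to intersect $C$ (or its strict transform under the Abel--Jacobi/incidence correspondence) with a generic hyperplane $\check{\BP}^4$ repeatedly: each such intersection cuts $C$ into a $0$-cycle of degree $e$ on a general hyperplane section, and the incidence variety $P$ spreads this out to a $1$-cycle of bounded degree; the constant $42=2\cdot 21$ then reflects twice the Plücker degree $g^2\cdot[S_l]=21$ of the surface $S_l$, or equivalently the factor $2r$ appearing in $c_3(\CE)=2r[C]$ for the tautological sheaves, multiplied by the block size. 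I would verify this numerology carefully but expect no conceptual difficulty once the reduction to hyperplane sections and Corollary \ref{cor2.10} are in place; the rest is an application of Proposition \ref{Prop1.6}(a)--(b) and Lemma \ref{lem1.7}.
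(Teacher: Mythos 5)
Your proposal has a genuine gap at exactly the step you flag as ``the substantive step'': you never give a mechanism that produces an \emph{effective} bound on the number of pieces. Paranjape's theorem says $\mathrm{CH}_1(Y)$ is generated by lines but is not effective, and an arbitrary irreducible curve of degree $e$ cannot in general be degenerated into a single hyperplane section (nor does Lemma \ref{lem1.7}, which is about specialization in families of cubics, license such a degeneration of $C$ inside a fixed $Y$). So the reduction to ``at most $\lceil \mathrm{const}\cdot e\rceil$ blocks, each supported on a general hyperplane section and hence in $S_5$ by Corollary \ref{cor2.10}'' is asserted rather than proved, and it is precisely where the content of the proposition lies. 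Your numerology also does not close: blocks in $S_5$ would force $42e$ to be a multiple of $5$ per block, whereas in fact $42=2\cdot 21$, with the $2$ coming from lines lying in $S_2(Y)$ (the $e=1$ case of $(\dag)$), not from Corollary \ref{cor2.10}.

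The paper's proof is a self-contained correspondence computation that you did partially anticipate via the constant $g^2\cdot[S_l]=21$, but the mechanism is different. One restricts the incidence variety $P$ to a divisor $D\subset F$ in the Pl\"ucker class $g$, obtaining a finite morphism $f:P_D\to Y$ with $\deg f\cdot[C]=f_*f^*[C]$. Since $P_D\to D$ is a $\BP^1$-bundle, $f^*[C]=p_D^*\alpha_0+p_D^*\alpha_1\cdot f^*[H]$ with $\alpha_0=-g|_D\cdot\alpha_1$ an anti-effective $0$-cycle of degree $21e$ (computed from $g^2\cdot[S_l]=21$) and $\alpha_1=p_{D*}f^*[C]$. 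Pushing forward, the $\alpha_1\cdot[H]$ term is a multiple of the canonical line by Lemma \ref{Lem2.2}, and the $\alpha_0$ term is minus a sum of $21e$ lines, each in $S_2(Y)$; Proposition \ref{Prop1.6} gives $\deg f\cdot [C]\in S_{42e}(Y)$ and Proposition \ref{prop1.10} removes the factor $\deg f$. If you want to salvage your route, you would need to replace the appeal to Corollary \ref{cor2.10} by some such explicit correspondence that converts $[C]$ into a bounded collection of lines; without it the argument does not go through.
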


\begin{proof}
We prove that there exist integers $k > 0$ and $m$ such that
\begin{equation}\label{Eqn25}
k[C]= - \big( [l_1 + l_2+ \dots + l_{21e}] \big) + m[l_0] \in \mathrm{CH}_1(Y),
\end{equation}
where $l_i \subset Y$ are lines and $l_0 \subset Y$ is a canonical line. The proposition follows immediately from Proposition \ref{prop1.10} and the $e=1$ case of (\dag). 

Recall that $P = \{(l,x)\in F \times Y: x\in l\}$ is the incidence variety with natural projections
\[
p_F: P\rightarrow F, \ \ p_Y: P \rightarrow Y.
\]
Let $D$ be a nonsingular divisor in the polarization class $g\in \mathrm{CH}^1(F)$. We have the following diagram
\[\begin{tikzcd}
P_D \arrow{r}{f} \arrow{d}{p_D} & Y, \\
D
\end{tikzcd}\]
where $p_D$ is the restriction of $p_F$ to $D\subset F$, and $f$ is the composition of the inclusion $P_D \hookrightarrow P$ and $p_Y$. Then $f$ is a finite morphism such that
\begin{equation}\label{EQN26}
\deg f\cdot [C]= f_\ast f^\ast [C] \in \mathrm{CH}_1(Y).
\end{equation}

Since $P_D$ is a projective bundle over $D$, the class $f^\ast [C]$ can be uniquely expressed as
\begin{equation} \label{eqN27}
f^\ast [C] = p_D^\ast \alpha_0 + p_D^\ast \alpha_1 \cdot f^\ast [H] \in \mathrm{CH}_1(P_D)
\end{equation}
with $\alpha_i \in \mathrm{CH}_i(D)$ and $[H] \in \mathrm{CH}^1(Y)$ the hyperplane class. A direct calculation (as in the proof of \cite[Proposition~6]{BD}) yields
\[
\alpha_0 = -g|_D\cdot \alpha_1, \ \ \alpha_1= p_{D\ast} f^\ast [C].
\]
In particular, we know that $-\alpha_0$ is an effective $0$-cycle class on $D$. Combining~\eqref{EQN26} and \eqref{eqN27}, we find
\begin{equation}\label{eqn123}
\deg f \cdot [C] = f_\ast (p_D^\ast \alpha_0) + (f_\ast p_D^\ast \alpha_1) \cdot [H] \in \mathrm{CH}_1(Y).
\end{equation}
Lemma \ref{Lem2.2} implies that $(f_\ast p_D^\ast \alpha_1) \cdot [H]$ is proportional to the class of a canonical line. The degree of the effective class $-\alpha_0$ is calculated by the intersection number
\[
g|_D \cdot \alpha_1 = g|_D \cdot p_{D*}f^\ast(e[l]) = e(g^2\cdot [S_l]) =21e. 
\]
Here recall that $S_l \subset F$ is the surface formed by lines passing through a given line $l \subset Y$. The last equality above is given by \cite[Lemma 18.2]{SV}. Hence \eqref{eqn123} gives the required expression \eqref{Eqn25}.
\end{proof}

\section{Algebraically coisotropic subvarieties} \label{seccoiso}

Let $M$ be a holomorphic symplectic variety of dimension $2d$. In \cite{V2}, Voisin proposed the following conjecture.\footnote{It is clear that Conjecture \ref{Conj3.1} implies \cite[Conjecture 0.4]{V2}. The converse is proven in \cite[Theorem 1.3]{V2}.}

\begin{conj}[{\cite[Conjecture 0.4]{V2}}] \label{Conj3.1}
For $0 \leq i \leq d$, there is a codimension $i$ algebraically coisotropic subvariety
\[\begin{tikzcd}
Z_i \arrow[hook]{r}{} \arrow[dashed]{d}{q} & M, \\
B_i
\end{tikzcd}\]
such that the general fibers of $q$ are $i$-dimensional constant cycle subvarieties of $M$.
\end{conj}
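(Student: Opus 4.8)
The plan is to establish Theorem~\ref{thm3.2}: granting Conjecture~\ref{mainconj}, every nonsingular projective moduli space $M$ of stable objects in $\CA_Y$, of dimension $2d$ say, satisfies Conjecture~\ref{Conj3.1}. The strategy parallels the $K3$-surface case of \cite{SYZ}, where the coisotropic subvarieties $Z_i$ are produced by an iterated incidence construction along a constant cycle curve on the surface; here the special uniruled divisor of Lemma~\ref{lem1.8} supplies the analogous families of constant cycle curves, now living on the Fano variety $F$. I would first reduce to the case where $Y$ is general: the relative uniruled divisor $\CD\to T$ of~\eqref{diagram}, the relative Fano scheme $\CF\to T$, and the relevant relative moduli spaces all exist over the moduli space $T$ of nonsingular cubic $4$-folds, so the construction below runs in families, and by Lemma~\ref{lem1.7} together with the stability of ``algebraically coisotropic with constant cycle fibers'' under specialization of the ambient hyperk\"ahler family, the general case follows from the general-$Y$ case. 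One is also free to pass to a convenient birational model of $M$.

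For the construction of $Z_i$, take the special uniruled divisor $j\colon D\hookrightarrow F$ of Lemma~\ref{lem1.8}, with its ruling $D\dashrightarrow\CE$ by rational curves, where $\CE$ is the elliptic surface swept out by the genus~$\leq 1$ curves $E_H$. Each ruling line $R\subset D$ is a constant cycle curve in $F$, and the lines of $Y$ it parametrizes share a single class in $\mathrm{CH}_1(Y)$. Applying $\iota^\ast$ to $\CO_l(1)$ over the universal line $P$ gives, for each such $R$, a $\BP^1$-family $\{\iota^\ast\CO_{l_t}(1)\}_{t\in R}$ of objects of $\CA_Y$ with constant class $\lambda_1\in K_{\mathrm{top}}(\CA_Y)$ and with equal $c_3$. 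The key construction is a rational ``Hecke'' correspondence modifying an object along such a family: from a moduli space $M_1$ of a Mukai vector $v_1$ with $\dim M_1=2d-2$, one sends a pair $(\CG,l_t)$, $\CG\in M_1$, to the unique stable object of $M$ with class $[\CG]+\lambda_1$, realized by a cone along the evaluation morphism after a stability check. Iterating $i$ times along $i$ ruling lines produces a rational map from a variety fibered over a $(2d-2i)$-dimensional base $B_i$, with $i$-dimensional rational fibers (birational to $\BP^i$); its image is $Z_i\subset M$, with $q\colon Z_i\dashrightarrow B_i$ the induced fibration. The $A_2$-lattice $\langle\lambda_1,\lambda_2\rangle\subset K_{\mathrm{top}}(\CA_Y)$ organizes the bookkeeping that makes the Mukai-vector shifts compatible with the count $\dim B_i=2d-2i$.

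It remains to verify that $Z_i$ is algebraically coisotropic with constant cycle fibers. Coisotropy is a $2$-form computation in the spirit of \cite[Section~4]{V2}: the symplectic form on $M$ is the Mukai pairing on $\mathrm{Ext}^1_{\CA_Y}$, the deformations of a modified object in the rational-curve directions of a fiber factor through the $\lambda_1$-summands, these directions are isotropic for the pairing, and the form restricts along them to a pullback, giving $\sigma_M|_{Z_i}=q^\ast\sigma_{B_i}$ for some $2$-form $\sigma_{B_i}$ on $B_i$. The fibers are constant cycle subvarieties of $M$ because each is dominated by a projective space, hence rationally connected; and since $c_3$ is constant along each ruling line, Conjecture~\ref{mainconj} (through Remark~\ref{rmk2.4} and Proposition~\ref{Prop1.6}) pins down $c_3$ of every object occurring, which is what guarantees both that the modifications stay inside $M$ and that $Z_i$ has codimension exactly $i$, matching the expectation that $Z_i$ is the closure of the locus where $c_3\in S_{d-i}(Y)$ modulo canonical lines.

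The main obstacle, I expect, is the Hecke construction of the second step: proving that the modified objects remain stable and belong to the prescribed component $M$, that the $i$-fold iteration is nondegenerate so that $\operatorname{codim}(Z_i\subset M)$ equals $i$ rather than something smaller, and that the identity $\sigma_M|_{Z_i}=q^\ast\sigma_{B_i}$ holds precisely. The stability analysis under these modifications, and the lattice bookkeeping arranging the Mukai-vector shifts, are the delicate points, and it is here that Conjecture~\ref{mainconj} is genuinely used --- to turn the control of $c_3$ into the geometric nondegeneracy and the constant cycle property.
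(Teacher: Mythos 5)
Your route is genuinely different from the paper's, and it hinges on a step that you yourself flag as the main obstacle but do not close: the ``Hecke'' modification. The paper never modifies objects of $M$ at all. Instead it takes the uniruled divisor $D\dashrightarrow B$ over the elliptic surface $B$ from Lemma~\ref{lem1.8}, forms the purely Chow-theoretic incidence
\[
R=\{(\CE,\xi)\in M\times B^{(d)}: c_3(\CE)=[P]_*j^{(d)}_*[\xi]+m[l_0]\in\mathrm{CH}_1(Y)\},
\]
shows via Lemma~\ref{lem3.4} and the Marian--Zhao argument (Proposition~\ref{prop3.5}) that the objects in a fiber of $p_{B^{(d)}}$ all share a class in $\mathrm{CH}_0(M)$ while the fibers of $p_M$ are $F$-constant cycle subvarieties of $B^{(d)}$, uses Conjecture~\ref{mainconj} precisely to make $p_M$ dominant, and then a Mumford-type comparison of $2$-forms to extract a component $R_0$ generically finite over both factors. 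The subvarieties $Z_i\subset M$ are then simply $p_M(p_{B^{(d)}}^{-1}(Z))$ for coisotropic $Z\subset B^{(d)}$ with constant cycle fibers, which are dense in $B^{(d)}$ by the torsion-point construction of Lemma~\ref{lem3.3}; this is exactly the $K3$ strategy of \cite{SYZ} with $B^{(d)}$ replacing $X^{(d)}$. No stability analysis, no cone construction, and no decomposition of the Mukai vector are needed.

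By contrast, your construction requires, for each $1\le i\le d$, a nonempty $(2d-2i)$-dimensional moduli space with Mukai vector $v-i\lambda_1$ (for $i=d$, a rigid object), the existence and uniqueness of a stable object realizing the class $[\CG]+\lambda_1$ as a cone, preservation of stability under the modification, and a nondegeneracy argument giving codimension exactly $i$. None of these is established, and the first already fails for a general Mukai vector $v$: there is no reason $v-i\lambda_1$ should support stable objects at all. Moreover, Conjecture~\ref{mainconj} enters your argument only as a vague afterthought, whereas in the paper it is the single decisive input: it guarantees that a general object of $M$ has $c_3$ represented by $d$ special lines up to a canonical line, i.e., that the correspondence $R\to M$ is dominant, which is what allows $B^{(d)}$ to play the role of the missing $K3$ surface. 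As written, your proof has a genuine gap at its central step; I would redirect you to the incidence-correspondence strategy.
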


The following theorem is the main result of this section, which shows that the sheaf/cycle correspondence for $\CA_Y$ can produce algebraically coisotropic varieties as in Conjecture \ref{Conj3.1} for all holomorphic symplectic moduli spaces of stable objects in $\CA_Y$.

\begin{thm} \label{thm3.2}
Conjecture \ref{mainconj} implies Conjecture \ref{Conj3.1} if the holomorphic symplectic variety $M$ is a moduli space of stable objects in $\CA_Y$ for a nonsingular cubic $4$-fold $Y$. 
\end{thm}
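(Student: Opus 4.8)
The plan is to follow the strategy of \cite[Section 4]{SYZ} for moduli of stable objects in $D^b(X)$, transporting it to the noncommutative setting via the filtration $S_\bullet(Y)$ and Conjecture \ref{mainconj}. Let $M$ be a smooth projective moduli space of $\sigma$-stable objects in $\CA_Y$ of dimension $2d$, with (quasi-)universal object $\CU$ on $M \times Y$. Fix $0 \leq i \leq d$. The idea is to produce $Z_i \subset M$ as a locus where the class $c_3$ of the corresponding object lies deep in the filtration: concretely, for $\CE \in M$ one has $d(\CE) = d$ by Speculation \ref{spec}(b) / Conjecture \ref{mainconj} heuristics, and one wants a subvariety swept out by objects $\CE$ which, after twisting/modifying, acquire $c_3 \in S_{d-i}(Y)$. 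The first step is therefore to use Conjecture \ref{mainconj} to define, for each $j$, the ``constant-cycle stratum'' $M_j = \{\,\CE \in M : c_3(\iota^\ast\CE) \in S_j(Y)\,\}$ (using Remark \ref{rmk2.4}), and to argue — exactly as in the $K3$ surface case — that $M_j$ is a countable union of closed subvarieties whose general point has the expected dimension $2j$, i.e. there is a component $Z_i' \subset M_{d-i}$ of dimension $\geq 2(d-i)$ through a very general point of an appropriate family.

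Second, I would construct the ruling. Starting from a special line $l \in D$ (with $D$ the special uniruled divisor of Lemma \ref{lem1.8}, living over an elliptic surface), the group law on the elliptic fibers — already exploited in Proposition \ref{prop1.10} — gives, for a fixed $\CE_0 \in M$ with $c_3(\iota^\ast\CE_0) \in S_d(Y)$, a positive-dimensional family of objects $\CE$ with $c_3(\iota^\ast\CE) - c_3(\iota^\ast\CE_0) \in S_1(Y) \cdot(\text{multiples})$, hence sitting in the same $S_\bullet$-stratum shifted by a controlled amount. Iterating $i$ times and intersecting with the moduli space, one gets a chain of rational/elliptic curves inside $M$ sweeping out the codimension-$i$ subvariety $Z_i$, with the fibers of $q : Z_i \dasharrow B_i$ being exactly these $i$-dimensional loci on which $c_3(\iota^\ast\CE)$, and therefore (by Conjecture \ref{mainconj} applied to differences, together with Lemma \ref{lemTF} to kill torsion ambiguities) the class $[\CE] \in \mathrm{CH}_0(M)$, is constant. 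This is where one invokes the structure results: the identification of $S_i\mathrm{CH}_0(M)$ with the locus $\{p_{\CA_Y}(\CE) \in S_i(\CA_Y)\}$ from Speculation \ref{spec}(a), which under Conjecture \ref{mainconj} is literally the stratum $M_i$ above, so that constant-cycle subvarieties of $M$ are precisely those mapping into a fixed $S_\bullet$-stratum.

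Third, I would verify the coisotropy of $Z_i$. The holomorphic $2$-form on $M$ is $\sigma_M = \mathrm{tr}\big(\mathrm{at}(\CU) \cup \mathrm{at}(\CU)\big)$ via the Mukai pairing on $\mathrm{Ext}^1_{\CA_Y}(\CE,\CE)$. Deforming $\CE$ along a fiber of $q$ corresponds to a subspace of $\mathrm{Ext}^1_{\CA_Y}(\CE,\CE)$; one shows this subspace is isotropic for $\sigma_M$ because the deformation keeps the cycle class constant — i.e. it lies in the kernel of the cycle-class / Abel--Jacobi-type map, which is isotropic for the Beauville--Bogomolov-compatible form. Equivalently, one runs the argument of \cite[Proposition 0.7 / Section 1]{V2}: a subvariety along whose fibers all points are rationally equivalent is automatically coisotropic, and the $2$-form descends to $B_i$. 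The only extra input needed beyond the $K3$ surface case is that $c_3 : \CA_Y \to \mathrm{CH}_1(Y)$ plays the role of $c_2 : D^b(X) \to \mathrm{CH}_0(X)$ and that $S_\bullet(Y)$ behaves formally like O'Grady's filtration — both of which are supplied by Section \ref{secfilt}, especially Proposition \ref{Prop1.6} and Example \ref{example}.

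The main obstacle I anticipate is \textbf{the passage from constant cycle class to constant class in $\mathrm{CH}_0(M)$}, and the accompanying dimension count. In the $K3$ case this uses that $c_2$ of the universal sheaf, restricted to a fiber, being constant forces the objects to be rationally equivalent as points of $M$; here one must instead argue that $c_3(\iota^\ast\CU_m) \in \mathrm{CH}_1(Y)$ constant in $m$ implies $[m] \in \mathrm{CH}_0(M)$ constant — a Bloch--Srinivas / spreading-out argument combined with the injectivity-type statement implicit in Speculation \ref{spec}(a). Making this rigorous requires knowing that the only source of $0$-cycles on $M$ is the $c_3$-invariant, which is precisely the content of the (conjectural) Beauville--Voisin picture; under the hypothesis that Conjecture \ref{mainconj} holds, this should close, but one must be careful to use only the \emph{effective} direction of the conjecture and the torsion-freeness from Lemma \ref{lemTF}. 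A secondary subtlety is ensuring the strata $M_j$ are nonempty and of the expected dimension for \emph{every} $M$, not just the known examples ($F$, the LLSS $8$-fold, $\CM'_4$); here the relative/universal construction of the special uniruled divisor over the moduli of cubic fourfolds (as in Lemma \ref{lem1.7} and Lemma \ref{lem1.8}) should give enough uniformity.
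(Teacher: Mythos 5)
There is a genuine gap. Your proposal stays entirely ``inside $M$'': you stratify $M$ by $M_j=\{\CE: c_3(\iota^\ast\CE)\in S_j(Y)\}$ and assert that $M_{d-i}$ has components of the expected dimension $2(d-i)$, then try to build the $i$-dimensional constant cycle fibers by iterating the elliptic group law on cycle classes. Neither step is actually carried out, and the second one cannot work as stated: modifying the class $c_3(\iota^\ast\CE)$ by elements of $S_1(Y)$ does not produce objects of $M$, so it gives no ruling of any subvariety of $M$. The paper's proof supplies exactly the missing mechanism: it introduces the elliptic surface $B$ underlying the special uniruled divisor of Lemma \ref{lem1.8} as a substitute for the $K3$ surface, forms the incidence $R\subset M\times B^{(d)}$ of pairs $(\CE,\xi)$ with $c_3(\CE)=[P]_\ast j^{(d)}_\ast[\xi]+m[l_0]$, uses Conjecture \ref{mainconj} to make $p_M$ dominant, and uses the comparison of symplectic forms (via Mumford's theorem) to make both projections generically finite. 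The codimension-$i$ coisotropic subvarieties of $M$ are then obtained by transporting through $R$ the dense family of such subvarieties of $B^{(d)}$ built from products of the $F$-constant cycle curves of Lemma \ref{lem3.3}; this is what replaces both your unproved dimension count and your group-law iteration.

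The second gap is the one you yourself flag: passing from ``$c_3(\iota^\ast\CE)$ constant'' to ``$[\CE]$ constant in $\mathrm{CH}_0(M)$.'' This is not a consequence of Conjecture \ref{mainconj} and is not closed by Bloch--Srinivas alone. The paper needs Lemma \ref{lem3.4} (injectivity of the cycle class map on $\mathrm{CH}^i(Y)_\BQ$ for $i\neq 3$) to conclude that equality of $c_3$ forces equality of the full Chern characters in $\mathrm{CH}^\ast(Y)_\BQ$, and only then does the Marian--Zhao argument \cite{MZ} with the (quasi-)universal family yield Proposition \ref{prop3.5}. Without this input your identification of constant cycle subvarieties of $M$ with $S_\bullet$-strata has no proof. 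Your third step (coisotropy of a subvariety with constant cycle fibers, descent of the $2$-form to $B_i$) is fine and matches what the paper invokes, but it only becomes relevant once the subvarieties and their rulings have actually been constructed.
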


\subsection{$K3$ surfaces} \label{seck3case}
Let $X$ be a $K3$ surface. Theorem \ref{thm3.2} is parallel to \cite[Theorem 0.5 (i)]{SYZ} which proves Conjecture \ref{Conj3.1} when $M$ is a moduli space of stable objects in $D^b(X)$. We briefly review the main steps of the proof of \cite[Theorem 0.5 (i)]{SYZ}.

For the moment, assume the holomorphic symplectic variety $M$ is a $2d$-dimensional moduli space of stable objects in $D^b(X)$.

\begin{enumerate}
\item[Step 1.] Let $X^{(d)}$ be the symmetric product. Consider the incidence 
\[
R = \{(\mathcal{E}, \xi) \in M \times X^{(d)} : c_2(\mathcal{E}) = [\xi] + m[o_X] \in \mathrm{CH}_0(X)\},
\]
which is a countable union of Zariski closed subsets of $M \times X^{(d)}$. We denote the natural projections by
\[
p_M: R\rightarrow M, \ \ p_{X^{(d)}}: R \rightarrow X^{(d)}.
\]
By a result by Marian and Zhao \cite{MZ}, all points on the same fiber of~$p_{X^{(d)}}$ (resp.~$p_M$) have the same class in $\mathrm{CH}_0(M)$ (resp.~$\mathrm{CH}_0(X^{(d)})$).
\item[Step 2.] O'Grady's conjecture \cite{OG2}, which was proven in full generality in \cite{SYZ}, implies that both $p_M$ and $p_{X^{(d)}}$ are surjective. Hence we can choose a component $R_0 \subset R$ dominating $M$ and $X^{(d)}$,
\begin{equation} \label{diag}
\begin{tikzcd}
& R_0 \arrow{dl}[swap]{p_M} \arrow{dr}{p_{X^{(d)}}} \\
 M  & &  X^{(d)}.
\end{tikzcd}
\end{equation}
Moreover, both morphisms $p_M$ and $p_{X^{(d)}}$ in the diagram above are generically finite.
\item[Step 3.] For $i\leq d$, the codimension $i$ algebraically coisotropic subvarieties with constant cycle fibers are dense in $X^{(d)}$. Hence we can always find an algebraically coisotropic subvariety $Z\subset X^{(d)}$ such that the morphism $p_{X^{(d)}}$ in \eqref{diag} is generically finite over $Z$, and that the restriction of $p_M$ to $p^{-1}_{X^{(d)}}(Z)$ is also generically finite. Then
\[
Z' = p_M(p^{-1}_{X^{(d)}}(Z))
\]
is a codimension $i$ algebraically coisotropic subvariety of $M$ which satisfies the condition in Conjecture \ref{Conj3.1}.
\end{enumerate}

The main difficulty of the proof of Theorem \ref{thm3.2} is the absence of the $K3$ surface, which breaks down all three steps above. We show how to overcome this issue using the geometry of cubic $4$-folds and their Fano varieties of lines.

\subsection{Step 1} From now on, we take the holomorphic symplectic variety $M$ to be a $2d$-dimensional moduli space of stable objects in the $K3$ category~$\CA_Y$. First, we modify Step 1 in Section \ref{seck3case} by the following construction.

Let $D \subset F$ be a uniruled divisor over a surface $B$,
\[
\begin{tikzcd}
D \arrow[hook]{r}{j} \arrow[dashed]{d}{q} & F. \\
B
\end{tikzcd}
\]
We identify the Chow groups $\mathrm{CH}_0(D)$ and $\mathrm{CH}_0(B)$ via the isomorphism
\[
q_\ast : \mathrm{CH}_0(D) \xrightarrow{\sim} \mathrm{CH}_0(B).
\]
For $k>0$, the embedding $j: D \hookrightarrow F$ induces a morphism
\[
j^{(k)}_\ast: \mathrm{CH}_0(B^{(k)}) \rightarrow \mathrm{CH}_0(F).
\]

We call $W \subset B^{(k)}$ an $F$-constant cycle subvariety if $j^{(k)}_\ast[w]$ is constant in~$\mathrm{CH}_0(F)$ for every point $w\in W$.

\begin{lem}\label{lem3.3}
There is a uniruled divisor $D$ on $F$,
\begin{equation*}
\begin{tikzcd}
D \arrow[hook]{r}{j} \arrow[dashed]{d}{q} & F, \\
B
\end{tikzcd}\end{equation*}
such that the surface $B$ contains infinity many $F$-constant cycle curves $\{C_i\}$ whose union is Zariski dense in $B$.
\end{lem}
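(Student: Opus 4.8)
The plan is to use the special uniruled divisor $D \subset F$ constructed in Lemma \ref{lem1.8}, the one which sits over an elliptic surface $B$. Recall from that construction that $D = p(W)$, where $W \subset F \times B_4$ is the incidence variety of pairs $(l, H)$ with $l \subset H \cap Y$ and $H \in B_4$ a hyperplane cutting out a cubic $3$-fold with four nodes, and that each fiber $q^{-1}(H)$ of $q : W \to B_4$ is birational to $E_H^{(2)}$ for an elliptic (or rational) curve $E_H$. So $B$ is fibered over $B_4$ with general fiber $E_H$ an elliptic curve. The idea is that the $F$-constant cycle curves we want are built from the torsion (more precisely, the constant Abel--Jacobi locus) structure on these elliptic curves, together with the Claim from Lemma \ref{lem1.8} that each fiber contains a point $a_H$ with $j_\ast[a_H] = [o_F]$.

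Concretely, first I would set up the elliptic fibration $B \to B_4$ and, after passing to a suitable finite base change or an étale neighborhood, pick the section through the points $a_H$ so that $E_H$ becomes a genuine elliptic curve with origin $a_H$. For each torsion order $n$, the locus of $n$-torsion points $E_H[n] \subset E_H$ sweeps out (a multisection hence) a curve $C_n \subset B$ as $H$ varies in $B_4$; more generally one can take the locus of points $y \in E_H$ with $ny = $ a fixed multisection value. Next I would verify that each such $C_n$ is an $F$-constant cycle curve: for $y \in E_H[n]$ we have $n[y] = n[a_H]$ in $\mathrm{CH}_0(E_H)$, hence — pulling back through the birational identification $q^{-1}(H) \simeq E_H^{(2)} \simeq E_H \times \{a_H\}$ and applying the argument of \cite[Lemma 2.2]{SYZ} exactly as in the proof of Lemma \ref{lem1.8} — we get $n \cdot j_\ast[y] = n \cdot j_\ast[a_H] = n[o_F]$ in $\mathrm{CH}_0(F)$. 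Since $\mathrm{CH}_0(F)$ is torsion-free by Lemma \ref{lemTF}, this forces $j_\ast[y] = [o_F]$, so every point of $C_n$ has class $[o_F]$, and $C_n$ is indeed an $F$-constant cycle curve (in fact a constant cycle curve with the canonical value). Finally, density: as $n \to \infty$ the torsion points $E_H[n]$ become dense in each fiber $E_H$, and since the fibers cover $B$, the union $\bigcup_n C_n$ is Zariski dense in $B$; there are infinitely many of the $C_n$ because they have strictly increasing degree over $B_4$.

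I expect the main obstacle to be the careful bookkeeping around the birational identifications $q^{-1}(H) \dashrightarrow E_H^{(2)} \dashrightarrow E_H$ and the base change needed to produce an honest zero-section, making sure that the constant-cycle statement survives resolution of singularities and the passage between $B$ (the base of the uniruled ruling of $D$) and the symmetric products $E_H^{(2)}$ — this is precisely the kind of subtlety handled by \cite[Lemma 2.2]{SYZ} and invoked in Lemma \ref{lem1.8}, so I would lean on that lemma rather than redo it. A secondary point to nail down is that the $C_n$ are genuinely curves (dominating $B_4$, so one-dimensional) rather than collapsing — this follows because each $E_H[n]$ is a nonempty finite set varying algebraically with $H$, hence traces out a multisection. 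Everything else — torsion density on elliptic curves, torsion-freeness of $\mathrm{CH}_0(F)$, the existence of $a_H$ — is already available from the excerpt.
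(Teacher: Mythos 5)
Your proposal follows essentially the same route as the paper: take the uniruled divisor of Lemma \ref{lem1.8}, whose base $B$ is fibered in elliptic curves over $B_4$, use the points $a_H$ from the Claim to produce an $F$-constant cycle multisection, sweep out the curves $C_n$ as torsion translates of that multisection, and get density from the density of torsion points; your explicit use of the torsion-freeness of $\mathrm{CH}_0(F)$ (Lemma \ref{lemTF}) to pass from $n\,j_\ast[y]=n[o_F]$ to $j_\ast[y]=[o_F]$ is exactly what is implicitly needed and is correct.

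The one point you omit is that Lemma \ref{lem1.8} only constructs this divisor for a \emph{general} cubic $4$-fold (it needs $B_4\subset\check{\BP}^5$ nonempty of the expected codimension), whereas Lemma \ref{lem3.3} is used in Section \ref{seccoiso} for an arbitrary nonsingular $Y$. The paper closes this gap in one sentence by specializing: uniruled divisors and $F$-constant cycle subvarieties are preserved under specialization from the general fiber of the universal family, so the configuration of curves $\{C_i\}$ degenerates to one on any given $Y$. You should add this step; otherwise the argument is complete.
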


\begin{proof}
First, we assume $Y$ to be a general cubic $4$-fold. In the proof of Lemma \ref{lem1.8}, we have constructed a uniruled divisor $q: D \dashrightarrow B$ such that~$B$ admits a fibration
\[
g: B\rightarrow T
\]
whose general fibers are elliptic curves. Moreover, the Claim in the proof of Lemma \ref{lem1.8} implies that there exists a multi-section $C\subset B$ of the morphism~$g$ which is an $F$-constant cycle curve. 

The required density is provided by the torsion structure of the elliptic curves on $B$. More precisely, all irreducible components of the locus
\[
D_i=\{x\in B: i[x-y] =0 \in \mathrm{CH}_0(g^{-1}(t)), y\in C, t\in T\}
\]
give the curves $C_i$ for $i > 0$.

Since specializations preserve uniruled divisors and $F$-constant cycle subvarieties, we obtain the lemma for any cubic $4$-fold. 
\end{proof}

The elliptic surface $B$ in Lemma \ref{lem3.3} plays the role of the $K3$ surface $X$ in Section \ref{seck3case}. Consider the following incidence
\[
R = \{(\mathcal{E}, \xi) \in M \times B^{(d)} : c_3(\mathcal{E}) = [P]_*j^{(d)}_\ast [\xi] + m[l_0] \in \mathrm{CH}_1(Y)\},
\]
where $[P]_*$ is the correspondence in \eqref{eqn1} and $l_0$ is a canonical line. There are the two projections
\[
p_M: R \rightarrow M, \ \ p_{B^{(d)}}: R \rightarrow B^{(d)}. 
\]

\begin{lem}\label{lem3.4}
The cycle class map
\[
\mathrm{cl}_Y: \mathrm{CH}^i(Y)_\BQ \rightarrow H^{2i}(Y, \BQ)
\]
is injective when $i \neq 3$.
\end{lem}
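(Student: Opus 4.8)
The plan is to dispose of the cases $i\in\{0,1,4\}$ by elementary means and concentrate on $i=2$, which is the only substantial case.

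For $i=0$ we have $\mathrm{CH}^0(Y)_\BQ=\BQ\cdot[Y]$ and $\mathrm{cl}_Y$ is the degree isomorphism onto $H^0(Y,\BQ)$. For $i=4$, note that $Y$ is a smooth Fano fourfold, hence rationally connected, so $\mathrm{CH}^4(Y)_\BQ=\mathrm{CH}_0(Y)_\BQ=\BQ$ and again $\mathrm{cl}_Y$ is the degree isomorphism onto $H^8(Y,\BQ)$. For $i=1$, since $Y\subset\BP^5$ is a smooth hypersurface of dimension $4\ge 3$, the Grothendieck--Lefschetz theorem gives $\Pic(Y)=\BZ\cdot\CO_Y(1)$, and $\mathrm{cl}_Y(\CO_Y(1))=[H]\neq 0$ in $H^2(Y,\BQ)$; alternatively $H^1(Y,\CO_Y)=0$ forces $\Pic^0(Y)=0$, so $\Pic(Y)$ already injects into $H^2(Y,\BZ)$.

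The case $i=2$ -- which I expect to be the only real obstacle, though a standard one -- I would handle via the Bloch--Srinivas decomposition of the diagonal. Since $Y$ is rationally connected, $\mathrm{CH}_0(Y)=\BZ$, i.e. $\mathrm{CH}_0(Y)$ is supported on a point; hence there exist $N\in\BZ_{>0}$, a point $y_0\in Y$, a divisor $D\subset Y$, a resolution of singularities $\widetilde D\to D$ with composite $k\colon\widetilde D\to Y$, and a class $\gamma\in\mathrm{CH}^3(Y\times\widetilde D)_\BQ$ such that
\[
N\,\Delta_Y = N\,(Y\times y_0) + (\mathrm{id}_Y\times k)_*\gamma \in \mathrm{CH}^4(Y\times Y)_\BQ .
\]
Acting with this identity of correspondences on a class $\alpha\in\mathrm{CH}^2(Y)_\BQ$: the term $Y\times y_0$ acts as zero on $\mathrm{CH}^i(Y)$ for $0<i<4$, so the identity reads $N\alpha = k_*(\gamma_*\alpha)$ with $\gamma_*\alpha\in\mathrm{CH}^1(\widetilde D)_\BQ=\Pic(\widetilde D)_\BQ$. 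If moreover $\mathrm{cl}_Y(\alpha)=0$, then $\gamma_*\alpha$ is homologically trivial, hence lies in $\Pic^0(\widetilde D)_\BQ$, on which the Abel--Jacobi map $\mathrm{AJ}_{\widetilde D}$ is an isomorphism. By the functoriality of Abel--Jacobi maps under correspondences, $\mathrm{AJ}_{\widetilde D}(\gamma_*\alpha)=\gamma_*\big(\mathrm{AJ}_Y(\alpha)\big)$ lands in the image of the intermediate Jacobian $J^2(Y)$ built on $H^3(Y,\BQ)$; but $H^3(Y,\BQ)=0$ by the Lefschetz hyperplane theorem, so $J^2(Y)=0$ and $\mathrm{AJ}_{\widetilde D}(\gamma_*\alpha)=0$. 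Hence $\gamma_*\alpha=0$, so $N\alpha=k_*(\gamma_*\alpha)=0$ and $\alpha=0$ in $\mathrm{CH}^2(Y)_\BQ$. This gives injectivity of $\mathrm{cl}_Y$ in degree $2$.

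The routine points to spell out are the dimension bookkeeping that makes $\gamma_*$ send $\mathrm{CH}^2(Y)$ into $\Pic(\widetilde D)$ and identifies the relevant Hodge-theoretic target as $J^2(Y)$, and the standard compatibility of $\mathrm{AJ}$ and of $\mathrm{cl}$ with the action of correspondences; neither is serious. The essential inputs are only $\mathrm{CH}_0(Y)=\BZ$ (rational connectedness of the Fano $Y$) and $H^3(Y,\BQ)=0$ (Lefschetz). Both are special to cubic fourfolds, which is also exactly why the degree-$3$ case must be excluded: there $H^6(Y,\BQ)=\BQ$ but $\mathrm{CH}^3(Y)=\mathrm{CH}_1(Y)$ is the large, non-representable group that is the main object of this paper.
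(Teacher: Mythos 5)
Your argument is correct and is essentially the paper's own proof: the cases $i=0,1,4$ are dispatched by elementary means, and the case $i=2$ uses the Bloch--Srinivas decomposition of the diagonal together with the vanishing of $H^3(Y,\BQ)$ (hence of the intermediate Jacobian $J^2(Y)$) to kill the homologically trivial, Abel--Jacobi trivial class in $\Pic(\widetilde D)_\BQ$. Nothing further is needed.
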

\begin{proof}
The statement for $i = 0$, $1$, and $4$ is immediate, and the $i = 2$ case follows from a standard diagonal decomposition argument. More precisely, let $\Delta_Y \subset Y \times Y$ be the diagonal of $Y$. By the theorem of Bloch and Srinivas~\cite{BS}, there is a decomposition
\begin{equation} \label{eqcor}
[\Delta_Y] = [x \times Y] + [V] \in \mathrm{CH}^4(Y \times Y)_\BQ,
\end{equation}
where $x \in Y$ is a point and $[V]$ is supported on $Y \times W$ for some divisor $W \subset Y$. We apply the correspondences on both sides of \eqref{eqcor} to $\mathrm{CH}^2(Y)_\BQ$. The left-hand side gives back the identity, while the right-hand side factors through the $\mathrm{CH}^1$-group of a desingularization $W'$ of $W$.

The intermediate Jacobian for codimension $2$ cycles on $Y$ is trivial. Given a class $\alpha \in \mathrm{CH}^2(Y)_{\BQ, \mathrm{hom}}$, its image under the right-hand side of \eqref{eqcor} factors through a homologically trivial and Abel--Jacobi trivial class in~$\mathrm{CH}^1(W')$, which is zero.
\end{proof}

The argument in \cite{MZ} gives the following result.

\begin{prop}\label{prop3.5}
Two objects $\CE_1$, $\CE_2 \in M$ satisfy 
\[
[\CE_1] = [\CE_2] \in \mathrm{CH}_0(M)
\]
if and only if
\begin{equation}\label{eqqn}
c_3(\CE_1) = c_3(\CE_2) \in \mathrm{CH}_1(Y).
\end{equation}
\end{prop}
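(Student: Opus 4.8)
The strategy is to mimic the argument of Marian--Zhao \cite{MZ}, which for $K3$ surfaces says that two stable objects define the same point in $\mathrm{CH}_0(M)$ precisely when their second Chern classes agree in $\mathrm{CH}_0(X)$. The key structural input is that a moduli space $M$ of stable objects in $\CA_Y$ carries a (quasi-)universal family $\CU$ on $M \times Y$, so that one can push and pull cycles across $M \times Y$. I would first establish the ``only if'' direction: if $[\CE_1] = [\CE_2] \in \mathrm{CH}_0(M)$, then applying the correspondence $[\CU] \in \mathrm{CH}^*(M\times Y)$ to the class $[\CE_1]-[\CE_2] = 0$ shows that the induced class on $Y$ vanishes. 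One has to check that the component of $[\CU]_*[\CE_i]$ landing in $\mathrm{CH}_1(Y)$ recovers $c_3(\CE_i)$ up to a correction by canonical lines and divisor classes; this is where Lemma \ref{Lem2.2} (intersections with $[H]$ are multiples of $[l_0]$) and possibly Lemma \ref{lem3.4} enter, to absorb the ambiguities in lower/higher codimension. Since we are comparing $c_3$ of two objects with the \emph{same} Mukai vector (they lie in the same $M$), the correction terms are identical for $\CE_1$ and $\CE_2$, so they cancel and \eqref{eqqn} follows.

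For the ``if'' direction — the substantive one — I would argue contrapositively in the spirit of \cite{MZ}: suppose $[\CE_1] \neq [\CE_2]$ in $\mathrm{CH}_0(M)$; we want to produce a difference in $c_3$. The idea is to use the universal family as a correspondence $\Phi = [\CU]_* : \mathrm{CH}_0(M) \to \mathrm{CH}_*(Y)$ and a reverse correspondence $\Psi$ going back, built from $\CU^\vee$ (or the family of derived duals), and to show that the composite $\Psi \circ \Phi$ acts on $\mathrm{CH}_0(M)_{\hom}$ (or on the relevant graded piece) as an isomorphism up to the action of invertible operators — concretely, one shows $\Psi \circ \Phi = \pm \mathrm{id}$ modulo terms that factor through Chow groups of lower-dimensional varieties or through $\mathrm{CH}^{\neq 3}(Y)$, which are controlled by Lemma \ref{lem3.4} and Lemma \ref{lemTF}. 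Because $\mathrm{CH}_1(Y)$ is torsion-free (Lemma \ref{lemTF}) and $\mathrm{CH}_0(M)$ is known to be torsion-free for holomorphic symplectic varieties of $K3^{[n]}$-type, these ``modulo torsion / modulo homologically trivial in other degrees'' caveats can be cleaned up. Tracking the Künneth/Chern-character bookkeeping, equality of $c_3(\CE_1)$ and $c_3(\CE_2)$ in $\mathrm{CH}_1(Y)$ then forces $\Phi([\CE_1]) = \Phi([\CE_2])$ in the appropriate Chow group of $Y$, and applying $\Psi$ gives $[\CE_1] = [\CE_2] \in \mathrm{CH}_0(M)$, a contradiction.

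The main obstacle is the same one flagged in Section \ref{seccoiso}: in the $K3$ case the argument of \cite{MZ} exploits that $\mathrm{CH}_0(X)$ is the ``interesting'' Chow group and the Chern character of a universal sheaf involves $c_2 \in \mathrm{CH}_0(X)$ cleanly; here the role of $\mathrm{CH}_0(X)$ is played by $\mathrm{CH}_1(Y)$, but the Chern character of $\CU$ also involves $\mathrm{CH}_2(Y)$ and $\mathrm{CH}^{\leq 1}(Y)$, and one must verify that these extra pieces either do not contribute to distinguishing points of $M$ or are rigid along $M$. This is precisely what Lemma \ref{lem3.4} is for — it says $\mathrm{CH}^i(Y)_\BQ \hookrightarrow H^{2i}(Y,\BQ)$ for $i \neq 3$, so the components of $c_*(\CU)$ in codimension $\neq 3$ are locally constant in families over $M$ (being topological), hence contribute the same constant to $\CE_1$ and $\CE_2$. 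Making this ``locally constant $\Rightarrow$ globally constant after specialization / connectedness of $M$'' step precise, and handling the passage between integral and rational coefficients (using torsion-freeness of both $\mathrm{CH}_1(Y)$ and $\mathrm{CH}_0(M)$), is where the real care is needed; the rest is a faithful transcription of \cite{MZ} with $X \rightsquigarrow Y$, $c_2 \rightsquigarrow c_3$, and $[o_X] \rightsquigarrow [l_0]$.
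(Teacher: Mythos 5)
Your proposal is correct and follows essentially the same route as the paper: the whole content is that Lemma \ref{lem3.4} forces the components of $\mathrm{ch}(\CE_i)$ in codimension $\neq 3$ to agree (they are determined by the common topological class of objects of $M$), so that \eqref{eqqn} is equivalent to $\mathrm{ch}(\CE_1)=\mathrm{ch}(\CE_2)$ in $\mathrm{CH}^*(Y)_\BQ$, after which one runs the Marian--Zhao argument verbatim with the (quasi-)universal family on $M\times Y$. The only cosmetic difference is that you add unnecessary ``correction terms'' in the easy direction and paraphrase the \cite{MZ} mechanism loosely, but the paper likewise just defers that step to \cite{MZ}.
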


\begin{proof}
By Lemma \ref{lem3.4}, the condition \eqref{eqqn} is equivalent to
\[
\mathrm{ch}(\CE_1) = \mathrm{ch}(\CE_2) \in \mathrm{CH}^\ast(Y)_\BQ.
\]
The rest of the proof is the same as in \cite{MZ} via the (quasi-)universal family over $M \times Y$.
\end{proof}

As a consequence of Proposition \ref{prop3.5}, all points on the same fiber of $p_{B^{(d)}}$ have the same class in $\mathrm{CH}_0(M)$. Moreover, by Proposition \ref{Cor1.3}, every component of a fiber of~$p_{M}$ is an $F$-constant cycle subvariety.

\subsection{Steps 2 and 3}
We modify Steps 2 and 3 in Section \ref{seck3case}, and complete the proof of Theorem \ref{thm3.2}. Conjecture \ref{mainconj} now plays the role of O'Grady's conjecture for $K3$ surfaces.

The following proposition is parallel to \cite[Proposition 1.3]{OG2} and \cite[Corollary 3.4]{V1}.

\begin{prop}
Assuming Conjecture \ref{mainconj}, there is a component $R_0 \subset R$ with the following diagram,
\[
\begin{tikzcd}
& R_0 \arrow{dl}[swap]{p_M} \arrow{dr}{p_{B^{(d)}}} \\
M  & &  B^{(d)},
\end{tikzcd}
\]
such that both morphisms $p_M$ and $p_{B^{(d)}}$ are dominant and generically finite.
\end{prop}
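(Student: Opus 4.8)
The plan is to make $p_M$ dominant using Conjecture \ref{mainconj}, and then to deduce the remaining assertions from a dimension count that exploits the infinite‑dimensionality of $\mathrm{CH}_0(M)$. First I record two generalities. The set $R$ is a countable union of Zariski closed subsets of $M\times B^{(d)}$: writing $R=\bigcup_{m\in\BZ}R^{(m)}$ with $R^{(m)}=\{(\CE,\xi):c_3(\CE)=[P]_\ast j^{(d)}_\ast[\xi]+m[l_0]\in\mathrm{CH}_1(Y)\}$, each $R^{(m)}$ is such a union by the standard Hilbert scheme argument used in the proof of Lemma \ref{lem1.7}. Moreover, since $\mathrm{CH}_1(Y)$ is torsion‑free (Lemma \ref{lemTF}) and $[l_0]\neq 0$, the integer $m$ is constant on every irreducible component of $R$.

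\textbf{Dominance of $p_M$.} Since $M$ is smooth of dimension $2d$, one has $\dim\mathrm{Ext}^1_{\CA_Y}(\CE,\CE)=\dim T_\CE M=2d$, hence $d(\CE)=d$, for every $\CE\in M$. Granting Conjecture \ref{mainconj}, $c_3(\CE)\in S_d(Y)$, so $c_3(\CE)=\sum_{k=1}^{d}[l_k]+m[l_0]$ with the $l_k$ special lines and $l_0$ a canonical line. By Lemma \ref{lem1} the $l_k$ may be taken special with respect to the uniruled divisor $D\subset F$ of Lemma \ref{lem3.3}; then, using resolution of singularities as in the proof of Lemma \ref{lem1.8}, each $l_k$ produces a point $b_k\in B$ with $j^{(1)}_\ast[b_k]=[l_k]\in\mathrm{CH}_0(F)$, so that $\xi:=b_1+\dots+b_d\in B^{(d)}$ satisfies $(\CE,\xi)\in R$. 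Thus $p_M(R)$ is dense in $M$, and since $R$ is a countable union of closed sets and $M$ is irreducible, some irreducible component $R_0$ of $R$ has $\overline{p_M(R_0)}=M$; in particular $\dim R_0\geq 2d$.

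\textbf{A finite fibre of $p_{B^{(d)}}$, and the dimension count.} As $M$ is holomorphic symplectic we have $H^0(M,\Omega^2_M)\neq 0$, so $\mathrm{CH}_0(M)$ is infinite‑dimensional and a very general point $\CE_0\in M$ is rationally equivalent to no other point of $M$ (Mumford; see \cite{BS}). Since $p_M(R_0)$ contains a dense open of $M$, choose such an $\CE_0$ with $(\CE_0,\xi_0)\in R_0$. Every point of $p_{B^{(d)}}^{-1}(\xi_0)\cap R_0$ has the form $(\CE',\xi_0)$ with $c_3(\CE')=[P]_\ast j^{(d)}_\ast[\xi_0]+m[l_0]=c_3(\CE_0)$, using that $m$ is constant on $R_0$ and that $(\CE_0,\xi_0)\in R_0$. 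By Proposition \ref{prop3.5} this forces $[\CE']=[\CE_0]\in\mathrm{CH}_0(M)$, hence $\CE'=\CE_0$, so the fibre is the single point $(\CE_0,\xi_0)$. By upper semicontinuity of fibre dimension, the generic fibre of $p_{B^{(d)}}|_{R_0}$ is then finite, so $\dim R_0=\dim\overline{p_{B^{(d)}}(R_0)}\leq\dim B^{(d)}=2d$. Combined with $\dim R_0\geq 2d$ this yields $\dim R_0=2d$ and $\overline{p_{B^{(d)}}(R_0)}=B^{(d)}$, i.e.\ $p_{B^{(d)}}|_{R_0}$ is dominant and generically finite; and then $p_M|_{R_0}\colon R_0\to M$, a dominant morphism of irreducible varieties of equal dimension $2d$, is generically finite as well.

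\textbf{Main obstacle.} The only substantial input is the dominance of $p_M$, where Conjecture \ref{mainconj} enters; everything afterwards is formal. Within that step the delicate points are that the special lines furnished by the conjecture can be moved onto the \emph{specific} uniruled divisor of Lemma \ref{lem3.3}—this is exactly the content of Lemma \ref{lem1}—and that such a line genuinely gives rise to a point of the base surface $B$, for which the resolution‑of‑singularities argument in the proof of Lemma \ref{lem1.8} serves as a template. Note that the dimension count also shows dominance of $p_{B^{(d)}}$ to be an automatic consequence of dominance of $p_M$, so Conjecture \ref{mainconj} is not needed separately for that direction.
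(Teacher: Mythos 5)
Your first step (dominance of $p_M$ from Conjecture \ref{mainconj}) matches the paper's, but your second step rests on a claim that is false as stated: it is \emph{not} true that a very general point of a projective holomorphic symplectic variety is rationally equivalent to no other point. Any symplectic automorphism acting trivially on $\mathrm{CH}_0$ (e.g.\ a symplectic involution of a $K3$ surface, by Voisin's theorem, or the induced automorphism on a moduli space) makes every non-fixed point rationally equivalent to its distinct image, so the orbit of a very general point can have several elements. Mumford's theorem and \cite{BS} do not give what you assert. What your dimension count actually needs is only that the orbit $O_{\CE_0}=\{\CE'\in M:[\CE']=[\CE_0]\in\mathrm{CH}_0(M)\}$ of a very general $\CE_0$ is \emph{zero-dimensional}, so that the fibre $p_{B^{(d)}}^{-1}(\xi_0)\cap R_0$, being a closed subvariety contained in the countable set $O_{\CE_0}\times\{\xi_0\}$, is finite. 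This weaker statement is true, but it requires an argument you have not supplied: if some component of the incidence $\{(x,y):[x]=[y]\}$ dominated $M$ with positive-dimensional fibres, then every general point class of $M$ would be supported on a proper closed subvariety, and the Bloch--Srinivas decomposition of the diagonal would force $H^0(M,\Omega^{2d}_M)=0$, contradicting the existence of the symplectic form. With that repair your dimension count ($2d\le\dim R_0=\dim\overline{p_{B^{(d)}}(R_0)}\le 2d$) does close the proof.

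For comparison, the paper avoids orbits altogether in this step: it shows that $p_M^\ast\sigma_M$ and $p_{B^{(d)}}^\ast\omega_B^{(d)}$ agree up to scale on $R_0$, using Mumford's theorem on holomorphic forms together with the fact that the fibres of $p_M$ are $F$-constant cycle subvarieties; nondegeneracy of the two $2$-forms then yields dominance and generic finiteness of $p_{B^{(d)}}$ simultaneously. Your route, once corrected, is a legitimate alternative and arguably more elementary (it uses Proposition \ref{prop3.5} and a dimension count rather than the comparison of $2$-forms), but as written the key finiteness assertion is unjustified and its stated form is incorrect.
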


\begin{proof}
Conjecture \ref{mainconj} implies that $R \rightarrow M$ is dominant. Hence we can choose a component $R_0 \subset R$ such that $R_0 \rightarrow M$ is also dominant. Now it suffices to show that the other projection $p_{B^{(d)}}: R_0\rightarrow B^{(d)}$ is also dominant. 

Note that there is a nondegenerate\footnote{Here we mean that the $2$-form is nondegenerate on the regular locus.} $2$-form $\omega_B$ on $B$ satisfying
\[
j^\ast \sigma = q^\ast \omega_B, 
\]
where $\sigma \in H^0(F, \Omega^2_F)$ is the holomorphic symplectic form on $F$. The $2$-form~$\omega_B$ further induces a nondegenerate $2$-form $\omega^{(d)}_B$ on $B^{(d)}$. We only need to prove that the pull-back of $\omega^{(d)}_B$ via $p_{B^{(d)}}: R_0\rightarrow B^{(d)}$ coincides with the pull-back of the holomorphic symplectic form on $M$ via $p_M$ (up to scaling). This is deduced from the fact that fibers of $p_M$ are $F$-constant cycle subvarieties in $B^{(d)}$ and from Mumford's theorem \cite{Mum}.
\end{proof}

This gives the required modification of Step 2. Finally, the density result of Lemma~\ref{lem3.3} plays the role of \cite[Lemma 2.4]{SYZ}, and the proof of Theorem~\ref{thm3.2} is identical to that of \cite[Theorem 0.5 (i)]{SYZ}.

\end{document}